\documentclass[12pt]{article}

\usepackage[dvipsnames]{xcolor}
\usepackage{amsmath,amsfonts, amsthm, amssymb}
\usepackage{array}
\usepackage{microtype}
\usepackage{graphicx}
\usepackage{tikz}
\usetikzlibrary{calc}
\usetikzlibrary{decorations.pathmorphing}
\usetikzlibrary{shapes.misc}

\tikzset{snake it/.style={decorate, decoration={snake, amplitude=0.5mm, segment length=3mm}}}
\tikzset{cross/.style={cross out, draw=black},
cross/.default={1pt}}

\newcommand{\arcto}[3]{
\draw[#3] let
  \p1 = ($#1-#2$),
  \n1 = {veclen(\x1,\y1)}
in
  #1 arc(180:0:\n1/2);
}

\newcommand{\ER}[1][1] {
\begin{tikzpicture}[scale=#1]
\coordinate (A) at (0,0);
\coordinate (B) at (2,0);
\draw [thick] (A) -- (B);
\node [circle,fill=white,inner sep=0.5pt] at (1,-0.07) {};
\end{tikzpicture}
}


\newcommand{\snake}[1][1] {
\begin{tikzpicture}[scale=#1]
\coordinate (A) at (0,0);
\coordinate (B) at (2,0);
\draw [semithick, densely dashed] (A) -- (B);
\node [circle,fill=white,inner sep=0.5pt] at (1,-0.07) {};
\end{tikzpicture}
}

\newcommand{\snakeO}[1][1] {
\begin{tikzpicture}[scale=#1]
\coordinate (A) at (0,0);
\coordinate (Q) at (1,0);
\coordinate (B) at (2,0);
\draw [semithick, densely dashed] (A) -- (B);
\node [circle, draw=black, fill=white, inner sep=2pt, line width=0.7pt] at (Q) {};
\node [cross, fill=black, inner sep=1.5pt, line width=1pt] at (Q) {};
\end{tikzpicture}
}
\newcommand{\snakeT}[1][1] {
\begin{tikzpicture}[scale=#1]
\coordinate (A) at (0,0);
\coordinate (Q) at (0.6,0);
\coordinate (R) at (1.4,0);
\coordinate (B) at (2,0);
\draw [semithick, densely dashed] (A) -- (B);
\node [circle, draw=black, fill=white, inner sep=2pt, line width=0.7pt] at (Q) {};
\node [cross, fill=black, inner sep=1.5pt, line width=1pt] at (Q) {};
\node [circle, draw=black, fill=white, inner sep=2pt, line width=0.7pt] at (R) {};
\node [cross, fill=black, inner sep=1.5pt, line width=1pt] at (R) {};
\end{tikzpicture}
}

\newcommand{\SCE}[1][1] {
\begin{tikzpicture}[scale=#1]
\coordinate (A) at (0,0);
\coordinate (B) at (2,0);
\draw [semithick, densely dashed] (A) -- (B);
\node [cross, fill=black, inner sep=1.5pt, line width=1pt] at (A) {};
\node [cross, fill=black, inner sep=1.5pt, line width=1pt] at (B) {};
\arcto{(A)}{(B)}{semithick}
\end{tikzpicture}
}
\setlength{\oddsidemargin}{0pt}
\setlength{\evensidemargin}{0pt}
\setlength{\textwidth}{6.0in}
\setlength{\topmargin}{0in}
\setlength{\textheight}{8.5in}

\setlength{\parindent}{0in}
\setlength{\parskip}{5px}

\def\Real{{\mathbb R}}

\def\bbZ{{\mathbb Z}}
\def\bbN{{\mathbb N}}

\def\Torus{{\mathbb T}}

\def\Expec{{\mathbb E}}
\def\Prob{{\mathbb P}}
\def\eps{{\varepsilon}}
\def\One{{\mathbf{1}}}
\DeclareMathOperator{\trace}{tr}

\newcommand{\diff}{\mathop{}\!\mathrm{d}}
\newcommand{\Id}{{\operatorname*{Id}}}
\newcommand{\Hilb}{{\operatorname*{H}}}
\newcommand{\Ft}[1]{{\widehat{#1}}}

\newcommand{\Impt}{\operatorname*{Im}}

\newcommand{\Op}{\operatorname*{Op}}

\newcommand{\Exercise}[1]{\noindent\textbf{Exercise.  }#1}
\newcommand{\Sol}[1]{}

\usepackage[style=alphabetic, backend=biber,maxbibnames=99]{biblatex}
\addbibresource{refs.bib}

\def\bra#1{\mathinner{\langle{#1}|}}
\def\ket#1{\mathinner{|{#1}\rangle}}
\def\braket#1{\mathinner{\langle{#1}\rangle}}

\newcommand{\mcal}[1]{{\mathcal{#1}}}

\newcommand{\ldom}{\prec}

\numberwithin{equation}{section}

\newtheorem{theorem}{Theorem}

\numberwithin{theorem}{section}
\newtheorem{lemma}[theorem]{Lemma}

\newtheorem{corollary}[theorem]{Corollary}
\newtheorem{proposition}[theorem]{Proposition}

\def\lsim{{\,\lesssim\,}}

\hfuzz=20pt

\title{Lecture notes on Quantum Diffusion and Random Matrix Theory}
\author{Felipe Hern{\'a}ndez}
\date{\today}

\begin{document}
\maketitle

\begin{abstract}
In joint work with Adam Black and Reuben Drogin~\cite{black2025tail,black2025self}, we develop a new approach to understanding the diffusive limit of the random Schrodinger equation based on ideas taken from random matrix theory.  These lecture notes
present the main ideas from this work in a self-contained and simplified presentation.  The lectures were given at the summer school ``PDE and Probability'' at Sorbonne Universit\'e from June 16-20, 2025.
\end{abstract}

\tableofcontents

\section{Introduction}
The goal of these lectures is to say what we can about the random Schr{\"o}dinger equation
in the weak coupling limit:
\begin{equation}
\label{eq:RSE}
i\partial_t \psi = \Delta \psi + \lambda V \psi.
\end{equation}
Above, $\Delta$ is the Laplacian, $d\geq 2$, $\lambda$ is a small coupling
parameter $\lambda\ll 1$, and $V$ is a random potential.  One can consider~\eqref{eq:RSE} on $\Real^d$
or $\bbZ^d$.  On $\Real^d$ one can take $V$ to be a stationary Gaussian field (for example), and
on $\bbZ^d$ it is simplest to take $V$ to have independent standard Gaussian entries, and $\Delta$
to be the nearest-neighbor Laplacian.

The motivation for studying~\eqref{eq:RSE} is that it is a model for studying wave transport in random
media.  Indeed, the Schrodinger equation is the simplest example of a dispersive PDE, and the term
 $\lambda V$ is the simplest kind of random perturbation that can be made to it.
Examples of wave transport in random media arise in telecommunications, geological imaging, and condensed matter physics (see~\cite{bal2010kinetic} for a survey of the field of waves in random media).
This last example is closest to the specific model described above -- in~\cite{anderson1958absence}, Anderson introduced~\eqref{eq:RSE} as a model for electron transport in disordered materials.
More specifically, Anderson considered a model on the lattice $\bbZ^d$ where $\Delta$ represents a
discrete ``hopping'' term\footnote{Anderson actually considered much more general hopping terms which could be nonlocal and nonuniform.} between lattice sites and $\lambda V$ is the potential associated to each site.
In this case the Hamiltonian $H=\Delta + \lambda V$ describes the effective energy of a single electron
in a disordered material, and thus encodes the electrical properties of this material.
For broader context on wave scattering and localization in disordered media, see the monographs~\cite{akkermans2007mesoscopic,sheng2007introduction}.

The simplest electrical property is the conductivity, and it is natural to ask whether $H$ describes
an insulator or a conductor.  A direct way to model
this mathematically (as considered already in Anderson's original work)
is to observe the dispersion of an initially localized wavefunction
$\psi_0\in\ell^2(\bbZ^d)$  -- for example one may take $\psi_0 = \ket{0}$, the wavefunction localized
at the site at the origin.  In this case one is interested in the mean square displacement of the wavefunction at time $t$, for $\psi_t$ solving~\eqref{eq:RSE}.  That is, one defines
\begin{equation}
\label{eq:rdef}
r^2(t) := \sum_{x\in\bbZ^d} |x|^2 |\psi_t(x)|^2.
\end{equation}
In~\cite{anderson1958absence}, Anderson argued that for sufficiently large $\lambda$ there is some constant $r_{\rm max}$ depending on the initial data $\psi_0$ and the potential $\lambda V$ such that $r(t) \leq r_{\rm max}$ for all $t$.
The physical interpretation of this fact is that sufficiently disordered materials are
\textit{insulators}.

That $r(t)$ is bounded at high disorder is derived a consequence of the fact that the operator $H$ has
a pure point spectrum of orthonormal eigenfunctions, each exponentially localized to a finite interval.
Anderson's original paper inspired an entire subfield of condensed matter physics characterizing the localization of eigenfunctions of random Schrodinger operators.  A thorough survey of localization is outside the scope of these lectures
(see~\cite{aizenman2015random,stollmann2001caught} for more comprehensive treatments),
but we can summarize what is known rigorously as follows:
\begin{itemize}
\item In $d=1$, the operator $H$ has a complete orthonormal basis of localized eigenfunctions, and therefore $r(t) \leq r_{\rm max} < \infty$ almost surely~\cite{gol1977pure,kunz1980spectre}.
.  In the weak coupling limit $\lambda \ll1$, it is the case that
$r_{\rm max} \simeq \lambda^{-2}$~\cite{pastur1980spectral,CL90}.
\item In any $d\geq 1$ and any $\lambda >0$, there exist localized eigenfunctions of $H$ near the spectral edges~\cite{frohlich1983absence}.
\end{itemize}
A conspicuous gap in the mathematically rigorous theory of localization is what happens to the \textit{bulk} of the spectrum in $d\geq 2$.  It is conjectured that in $d\geq 3$ there is a ``metal-insulator'' transition in the spectrum between localized eigenfunctions and pure point spectrum near the edges and a continuous spectrum consisting of ``extended'' or delocalized eigenstates in the bulk.  In $d=2$ it is instead conjectured that the bulk consists of localized eigenfunctions with localization length scale on the order $e^{c\lambda^{-2}}$, as predicted in~\cite{abrahams1979scaling}.  These conjectures appear in the list of Simon's problem's~\cite{simon1984fifteen,simon2000schrodinger}.

In terms of bounds on $r(t)$, the above results and conjectures correspond to the following bounds
on $r(t)$:
\[
\sup_{t<\infty} r(t) = r_{\rm max} \simeq \begin{cases} \lambda^{-2}, & d=1 \\
e^{\lambda^{-2}}, & d=2 \\
+\infty, &d\geq 3.
\end{cases}
\]
The conjectured values for $r_{\rm max}$, the extended states conjecture, and the existence of the mobility edge seem to be far out of the reach of current methods.  A more modest goal is to
try to characterize $r(t)$ for finite (as opposed to infinite) times.  This is the goal of these
lecture notes.   There are two heuristics that determine the behavior of $r(t)$:
\begin{enumerate}
\item For times $t\ll\lambda^{-2}$, the evolution $e^{-itH}$ behaves like the free evolution $e^{-it\Delta}$.
\item The effect of the potential $V$ is to scatter the wavefunction, and each scattering event is independent.
\end{enumerate}
The combination of these heuristics suggests that $\psi_t(x)$ should be some random superposition
of random walk paths of $\lambda^2t$ steps and step size $\lambda^{-2}$.  Therefore, one expects that
$|\psi_t|^2$ resembles a Gaussian and $r(t) \approx \lambda^{-1}t^{1/2}$.  See Figure~\ref{fig:numerical} for a numerical simulation.

\begin{figure}
\centering
\begin{tabular}{m{7cm}m{7cm}}
\includegraphics[scale=0.7]{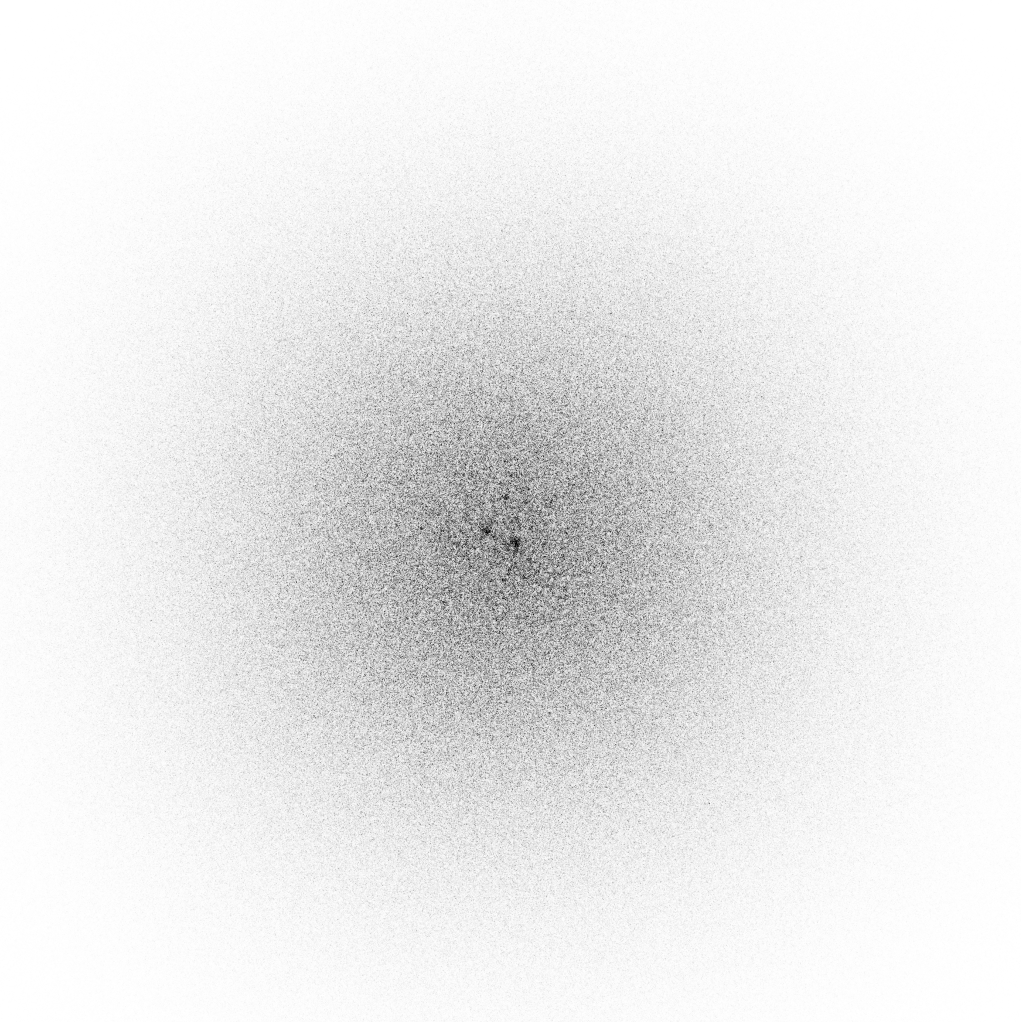} & \includegraphics[scale=0.4]{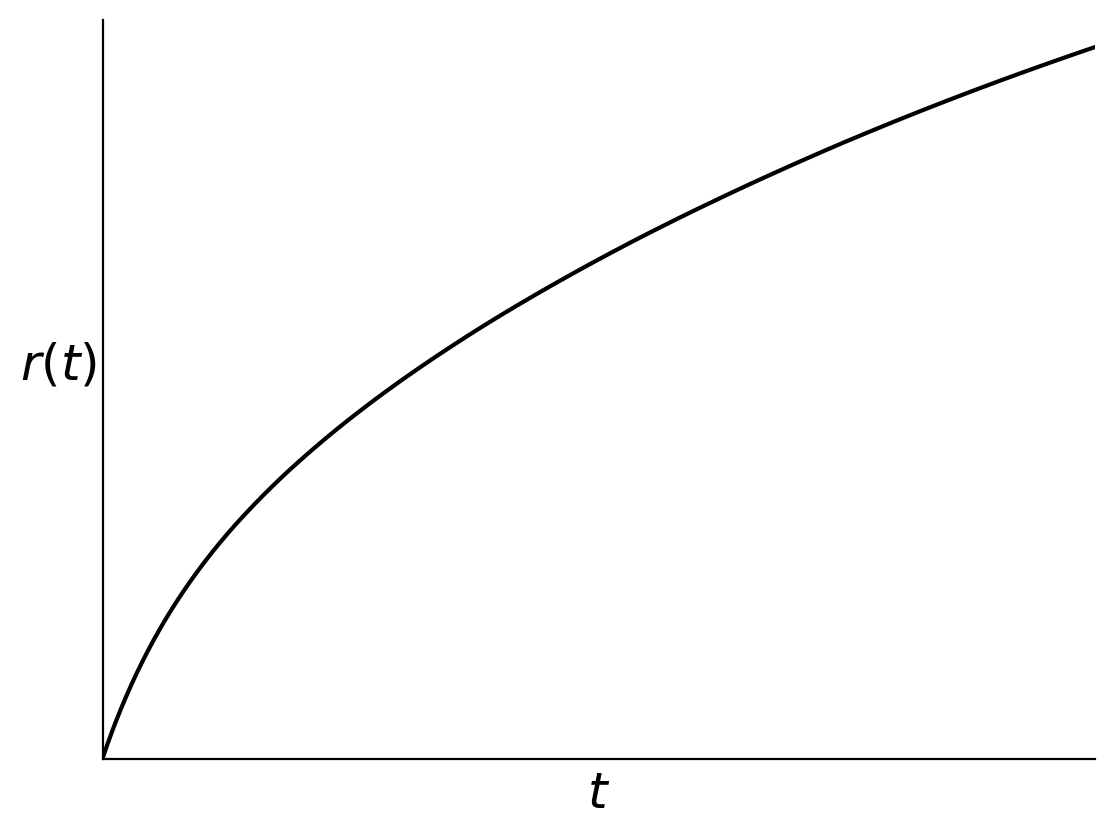}
\end{tabular}
\caption{The result of a simulation in $d=2$, $\lambda=0.1$ at time $T=2000$ with initial data $\psi_0=\ket{0}$.  On the left is the square of the wavefunction at time $T$, and on the right is the plot of $r(t)$ as a function of $t$.  Note that $|\psi_T|^2$ weakly resembles a Gaussian distribution, and that $r(t)$ appears to have square-root growth at long times.}
\label{fig:numerical}
\end{figure}

In these notes we provide a self-contained proof of the following result.
\begin{theorem}[Simplification of Theorem 1.1 in~\cite{black2025self}]
\label{thm:main-thm}
Let $\psi_0$ be the Kronecker delta at the origin and $\psi_t$ solve~\eqref{eq:RSE} on $\bbZ^d$, $d\geq 2$, and let $\kappa < \frac{1}{10}$.  Then for any
$\lambda^{-2}\leq T\leq \lambda^{-2-\kappa}$, the bound
\[
\frac{1}{T}\int_0^T \sum_{x\in\bbZ^d} |x|^2 |\psi_t(x)|^2\diff t \geq c \lambda^{-1} T^{1/2}
\]
holds with probability at least $1 - C\lambda^{1000}$.
\end{theorem}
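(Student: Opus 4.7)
My plan is to convert the desired lower bound on the mean-square displacement into an \emph{upper} bound on the $\ell^4$ mass $\sum_x|\psi_t(x)|^4$, and then to estimate this $\ell^4$ norm via a two-resolvent/local-law argument in the spirit of random matrix theory.

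The first step is a purely deterministic anti-concentration observation. Since $\|\psi_t\|_{\ell^2}=1$, Chebyshev's inequality implies that whenever $r^2(t)\leq R^2$ at least $3/4$ of the mass of $|\psi_t|^2$ lies in the ball $|x|\leq 2R$; Cauchy--Schwarz on this ball then forces $\sum_x|\psi_t(x)|^4\geq cR^{-d}$. Together these give the pointwise-in-$t$ inequality $r^2(t)\geq c_d(\sum_x|\psi_t(x)|^4)^{-2/d}$. Applying Jensen to the convex function $y\mapsto y^{-2/d}$ in the $t$-average, the theorem reduces to proving the high-probability upper bound
\[
\frac{1}{T}\int_0^T \sum_x|\psi_t(x)|^4\,\diff t \;\leq\; C\,\lambda^{d/2}\,T^{-d/4}.
\]
As a sanity check, the Boltzmann prediction $|\psi_t(x)|^2\sim(\lambda^{-2}t)^{-d/2}\exp(-cx^2/(\lambda^{-2}t))$ gives $\|\psi_t\|_4^4\sim\lambda^d t^{-d/2}$, which time-averaged over $t\in[\lambda^{-2},\lambda^{-2-\kappa}]$ comfortably satisfies the above for $\kappa<1/10$.

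To prove this $\ell^4$ bound I would pass to the resolvent $G(z)=(H-z)^{-1}$ via a time-to-energy transform (Plancherel at smoothing scale $\eta\sim 1/T\sim\lambda^{2+\kappa}$), which expresses the time-averaged $\ell^4$ norm as a four-resolvent integral in the variables $E_j+i\eta$, after controlling the $t>T$ tail by the trivial bound $\|\psi_t\|_4^4\leq 1$. Its expectation is controlled by the two-point correlation $\Expec|G_{x0}(E+i\eta)|^2$. From the Schwinger--Dyson/self-consistent equation for $\Expec G(z)$, the deterministic leading term is the free resolvent dressed by a self-energy $\Sigma(E)$ with $\Impt\Sigma(E)\sim\lambda^2\rho(E)$ (Fermi's golden rule). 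The diagonal two-point function then satisfies a Bethe--Salpeter equation whose leading solution is a diffusion kernel of characteristic length $\ell(\eta)=\sqrt{\lambda^{-2}/\eta}\sim\lambda^{-1}T^{1/2}$, reproducing the Boltzmann prediction and giving the required $\ell^4$ bound in expectation.

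Upgrading from expectation to probability $1-C\lambda^{1000}$ is a high-moment computation: expand the resolvent by iterated Gaussian integration by parts on $V$ (Stein's lemma) and organize the resulting sums into Feynman diagrams, in which non-crossing ladder diagrams reproduce the deterministic diffusion and each crossing carries a gain of some power $\lambda^{2\delta}$. Taking moments of order $k\sim\log\lambda^{-1}$ yields the $\lambda^{1000}$ tail by Markov. The main obstacle is precisely this quantitative suppression of crossings at the diffusive scale $\eta\sim\lambda^{2+\kappa}$ rather than the mean-free-time scale $\eta\sim\lambda^2$: the condition $\kappa<1/10$ encodes the window in which a bounded number of crossings can still be absorbed into the error. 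Extending to larger $\kappa$ requires the finer cancellation structure developed in~\cite{black2025self}.
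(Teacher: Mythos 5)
Your deterministic reduction from the mean-square displacement to an $\ell^4$ bound on $\psi_t$ (Chebyshev on the mass in a ball of radius $\sim r(t)$, Cauchy--Schwarz, then Jensen in the time average) is correct and, as far as I can tell, not in the paper; it is a genuinely different starting point. The paper's reduction instead goes through the Parseval-type identity
\[
\eta\int_0^\infty e^{-\eta t}|e^{itH}_{0x}|^2\,\diff t \;=\;\int_{-\infty}^\infty |R_{0x}(E+i\eta)|^2\,\diff E,
\]
which converts the Laplace-averaged mean-square displacement directly into a statement about the row $\ell^2$-mass of the resolvent. Theorem~\ref{thm:main-thm} is then a consequence of Theorem~\ref{thm:resolvent-deloc}: the Ward identity fixes $\sum_x|R_{0x}|^2=\eta^{-1}\Impt R_{00}\sim\eta^{-1}$ deterministically, and the substance is the \emph{upper} bound $\sum_{|x|<c_1\lambda\eta^{-1/2}}|R_{0x}|^2\leq c_0\eta^{-1}$, which follows from the two-resolvent local law (Proposition~\ref{prp:RFR}, the $T$-equation for $\Expec RFR^*$) plus the random-walk anticoncentration Lemma~\ref{lem:anticonc}.

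There is a real gap at the point where you assert that the time-averaged $\ell^4$ norm ``is controlled by the two-point correlation $\Expec|G_{x0}(E+i\eta)|^2$.'' After the time-to-energy transform, $\int e^{-2\eta t}\|\psi_t\|_{\ell^4}^4\,\diff t$ is a genuine four-resolvent quantity, and the crude surrogate $\|\psi_t\|_4^4\leq\|\psi_t\|_\infty^2$ requires a pointwise-in-$t$ supremum estimate. Neither is an immediate consequence of a two-resolvent local law, and making this step rigorous would require either a multi-resolvent local law or a supremum-in-time Green's function estimate---both substantially harder than what the paper's route needs, where everything is phrased in terms of the single observable $\Expec(RFR^*)_{xx}$. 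Relatedly, your proposed upgrade to probability $1-C\lambda^{1000}$ via high moments and Feynman-diagram crossings is exactly the combinatorial expansion the paper is designed to sidestep: the concentration in the paper comes from the Gaussian Poincar\'e inequality applied to $R_{xy}$ and $(RFR^*)_{xx}$ (Lemmas~\ref{lem:higher-GPI} and~\ref{lem:Rconc}), combined with the a priori $\|R\|_{p\to q}$ bounds from Section~\ref{sec:apriori}, with no diagram counting. Finally, your heuristic that $\kappa<1/10$ encodes ``absorbing a bounded number of crossings'' is in the right spirit but not where the exponent actually comes from: in the paper it arises because the error term $\lambda^{1/2}(\lambda^2\eta^{-1})^5\eta^{-1}$ in Proposition~\ref{prp:RFR} must be small compared to the Ward-identity scale $\eta^{-1}$, which forces $\eta\gtrsim\lambda^{2+1/10}$, i.e.\ $T\lesssim\lambda^{-2-1/10}$.
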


Prior work towards understanding diffusion in the random Schrodinger equation can be characterized loosely into three threads.\footnote{Of course, this categorization is incomplete and overly simple.  Some very interesting works that the reader may also find
inspiring are the papers of Magnen, Poirot, and Rivasseau~\cite{magnen1997anderson,magnen1998ward,magnen1999renormalization}
and Duerinckx and Shirley~\cite{duerinckx2021new}.}

The first thread consists of \textit{direct perturbative} approaches.  This can be traced back to the work of van Hove~\cite{van1954quantum}, who first observed conditions under which perturbations $\lambda W$ to a Hamiltonian $H_0$ would have ``effective strength'' $\lambda^2$.  More specific to the Anderson model, Vollhardt and W\"olfle~\cite{vollhardt1980diagrammatic} described a diagrammatic approach which explains diffusive behavior and localization in $d\leq 2$.
The first mathematically rigorous work along these lines was by Herbert Spohn~\cite{Spohn} who considered the ``van Hove limit'' $\lambda\to 0$ and $t\sim \lambda^{-2}$ for the random Schrodinger equation, establishing a kinetic equation for the momentum distribution up to time $c\lambda^{-2}$.
This was improved by L\'aszl\'o Erd\H{o}s and HT Yau in~\cite{EYkinetic}, who reached time scales $C\lambda^{-2}$ and established a kinetic equation for the Wigner phase space distribution of $\psi_t$.  The difficulty with the ``direct perturbative'' approach is that to reach longer time scales one must go up to higher orders in perturbation theory and tame a combinatorial explosion of diagrams.  Nevertheless in a technical tour-de-force, Erd\H{o}s, Salmhofer, and Yau~\cite{ESYRSE,ESYAnderson,ESYrecollision} reached diffusive time scales on the order $\lambda^{-2-\eps}$ (with $\eps = \frac{1}{9800}$ on $\bbZ^3$, and $\eps=\frac{1}{370}$ on $\Real^3$), which was the first rigorous realization of some of the diagrammatic techniques of~\cite{vollhardt1980diagrammatic}.
This work was revisited in~\cite{hernandez2024quantum} which provided a different perspective on the diagrammatic expansions.  These papers only addressed convergence to a diffusive limit \textit{in expectation}, whereas convergence in higher order moments was established by Thomas Chen up to kinetic time in~\cite{Chen}.

The second thread consists of \textit{harmonic analysis} approaches to the random Schrodinger equation.  In~\cite{SSW}, Schlag, Shubin, and Wolff proved that in $d\leq 2$, eigenfunctions for the random Schrodinger operator have Fourier transforms localized to a $\lambda^2$-thick annulus around the corresponding level set of the dispersion relation. In dimension $2$, this proof used as input
the C\'ordoba $L^4$ argument used in two-dimensional restriction theory.  Later~\cite{Bourgain} established similar estimates without the use of wavepacket analysis, using instead stronger inputs from probability
Both of these papers pushed the first heuristic, that $e^{itH}$ behaves like $e^{it\Delta}$ up to time $\lambda^{-2}$, nearly as far as it could go by using the properties of the Laplacian in a strong
way.

The third thread of research is \textit{random matrix} analysis of related models.
The Anderson Hamiltonian $H=\Delta+\lambda V$
is itself a random matrix, but the fact that the randomness appears only on the diagonal has thus far precluded the use of traditional
ideas from the field.  Indeed, the most traditional random matrix models have randomness in \textit{every} entry.  Nevertheless, there has been recent and rapid progress on understanding random matrices with fewer and fewer random entries.  A simple example is the
random band matrix on $\bbZ^d_L := \bbZ^d/L\bbZ^d$, which has Hamiltonian $H$
\[
H_{xy} =
\begin{cases}
g_{xy}, &|x-y| \leq W \\
0, &\text{ else.}
\end{cases}
\]
The random entries $g_{xy}$ are independent up to the symmetry constraint $g_{xy}=g_{yx}$.  Recent progress on this model uses
the framework of computing moments for the resolvent $R(z)=(H-z)^{-1}$ via \textit{self-consistent equations}
developed first for Wigner matrices~\cite{erdos2009semicircle} (see also~\cite{erdos2013local} for a clear exposition)
and extended to random band matrices in~\cite{Teq}.
In a recent series of papers~\cite{Y3I,Y3II,Y3III}, it was shown that in $d\geq 8$ the eigenvectors of $H$ are completely delocalized so long as $W\geq L^{\eps}$ and $L$ is large enough.
Even more recently, this work has been extended to other random matrix models that are
closer to the Anderson model~\cite{Y2block} and, in spectacular recent breakthroughs,
even to dimension $d=2$~\cite{Dubova} and $d\geq 3$~\cite{dubova20253d}.
Until now, however, these random matrix methods have never been applied directly to the Anderson model.

In these notes we explain how to use ideas from each of these threads of prior work to prove Theorem~\ref{thm:main-thm}.
We break up the exposition into two sections.  Section 2 covers the required results up the kinetic time scale $\lambda^{-2}$
which were established in~\cite{black2025tail}.  Section 3 then explains how to prove results at the diffusive time scale and
proves Theorem~\ref{thm:main-thm}.   In these lecture notes the goal is to communicate my favorite ideas from the proofs, and to skip
details I find less interesting.  The reader can of course refer to~\cite{black2025tail,black2025self} for more details and stronger
results.

\vskip 12pt
\textbf{Acknowledgements.}
I thank the organizers of the ``PDE and Probability" summer school at Sorbonne Université for the invitation to deliver these lectures.  In particular, I would like to thank Antoine Gloria for his generous hospitality during this visit.  Of course the ideas and perspectives presented in these notes are the product of many discussions with Adam Black and Reuben Drogin, who also provided helpful comments and suggestions during the preparation of these lectures.  This work was supported by NSF award DMS-2303094.

\newpage
\section{The Kinetic Time Scale}
In this section we try to squeeze out as much information about $e^{itH}$ as possible just from considerations at short
times (short compared to the kinetic timescale $\lambda^{-2}$).  Previously it was known that one could obtain delocalization
lower bounds at scale $\lambda^{-2}$~\cite{SSW,Chen}.   In this chapter we establish this as a consequence of the fact that $e^{itH}\approx e^{it\Delta}$ for $t\ll\lambda^{-2}$, which itself has much farther reaching consequences.  In particular, we are able to prove
$\ell^p\to\ell^q$ bounds for the spectral projections of $H$ which we will use as a priori estimates to prove diffusion.

We can write $e^{itH}-e^{it\Delta}$ using the Duhamel formula as follows:
\begin{equation}
\label{eq:duhamel}
e^{-itH} - e^{-it\Delta} = i\lambda \int_0^t e^{-i(t-s)H} V e^{is\Delta} \diff s.
\end{equation}
Thinking of $V$ as a bounded potential, one naively has $e^{-itH}-e^{-it\Delta} = O(\lambda t)$.
In fact however there is a square-root cancellation in the integral above which leads to the estimate
\[
e^{-itH} - e^{-it\Delta} = O(\lambda \sqrt{t}).
\]
Such an estimate would imply that the effective strength of the potential is $\lambda^2$ rather than
$\lambda$, in the sense that for times $t\ll\lambda^{-2}$ the potential is not noticed.

So far I have been imprecise about the precise norm one should use to measure $e^{-itH} - e^{-it\Delta}$.
For example, one precise (and correct!) statement one can make is
\begin{equation}
\label{eq:moment-cancellation}
\sup_{\|\psi\|_{\ell^2}=1} \Big(\Expec \|e^{-itH}\psi - e^{-it\Delta}\psi\|_{\ell^2}^{2k}\Big)^{\frac{1}{2k}}
\lsim C_k\lambda \sqrt{t}.
\end{equation}
The bound~\eqref{eq:moment-cancellation} is useful in some applications, but it cannot directly be used
to say anything about \textit{eigenfunctions} of $H$, for the simple reason that eigenfunctions are
random functions and the statement above is about deterministic $\psi$.  A stronger and more useful bound
would swap the expectation and supremum above, and thus be a bound on operator norm as in
\begin{equation}
\label{eq:operator-norm}
\Big(\Expec \|e^{-itH}- e^{-it\Delta}\|_{\ell^2\to\ell^2}^{2k}\Big)^{\frac{1}{2k}}
\lsim C_k\lambda \sqrt{t}.
\end{equation}
Such a bound cannot be true on $\bbZ^d$ for the trivial reason that a random potential resembles any
deterministic potential on arbitrarily large sets (although extremely sparsely).  There are several
ways to remedy this.  One is to add a cutoff to the potential so that it has compact support, as done in~\cite{BDH}.   An alternate remedy, which we use in these notes, is to consider a model
on a discrete torus
$\bbZ^d_L := \bbZ^d / L\bbZ^d$ for a length $L = \lambda^{-10}$ (say), which is much larger than
any length scale relevant to the dynamics we consider.

We are now ready to state the main result of this chapter.
\begin{theorem}
\label{thm:propagator-kinetic}
Let $L=\lambda^{-100}$ and $H = \Delta_L + \lambda V$.
Then with probability at least $\exp(-cK^2)$, we have that the estimate
\[
\|e^{-itH} - e^{-it\Delta}\|_{\ell^2\to\ell^2} \leq K (\log\lambda^{-1})^2 \lambda \sqrt{t}.
\]
holds for all $t\in\Real$.
\end{theorem}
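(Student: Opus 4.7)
My plan is to control the operator norm by a high–moment estimate and then upgrade to uniformity in $t$ by a net argument. By the Schatten inequality $\|A\|_{\ell^2\to\ell^2}^{2k} \leq \trace\bigl((A^*A)^k\bigr)$ together with Markov's inequality, the claimed sub-Gaussian tail will follow once I establish a moment bound of the form
\[
\Expec\trace\bigl((E(t)^*E(t))^k\bigr) \leq (Ck)^{k}(\lambda^2 t)^k(\log\lambda^{-1})^{Ck},
\]
where $E(t):=e^{-itH}-e^{-it\Delta}$, for every integer $k$ and every fixed $t\leq c\lambda^{-2}(\log\lambda^{-1})^{-4}$. Outside this range the trivial bound $\|E(t)\|_{\ell^2\to\ell^2}\leq 2$ already beats the claim. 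Taking $k\sim K^2$ and optimizing then yields the desired $\exp(-cK^2)$ tail.

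To compute the moment, I would iterate the Duhamel formula~\eqref{eq:duhamel} into the Dyson series
\[
e^{-itH} = \sum_{n\geq 0}(-i\lambda)^n \int_{t\geq s_1\geq\cdots\geq s_n\geq 0} e^{-i(t-s_1)\Delta} V e^{-i(s_1-s_2)\Delta} V \cdots V e^{-is_n\Delta}\,ds,
\]
truncated at level $N\sim \log\lambda^{-1}$. The tail beyond order $N$ is bounded in operator norm by $(\lambda\|V\|_\infty)^n t^n/n!$, which is negligible on the overwhelming event $\|V\|_\infty\lesssim \log\lambda^{-1}$. The truncated $E(t)$ is then a polynomial in $V$ of total degree $<N$, so $\trace((E^*E)^k)$ is a polynomial of degree $<2Nk$ whose $V$-expectation, by Gaussian integration by parts, is a sum over perfect pairings of the $V$-vertices.

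The heart of the argument, and the main obstacle, is to bound the sum over pairings. Each pairing identifies two lattice sites, collapsing the $V$-dependence to a product of Kronecker deltas; summing over the remaining positions reduces each diagram to a closed loop of free propagators whose topology is dictated by the pairing. The dominant ``ladder'' contributions (non-crossing, nearest-neighbor in the vertex sequence) give exactly $(\lambda^2 t)^k$ after iteratively applying the unitarity identity $\sum_y |e^{-iu\Delta}(x,y)|^2 = 1$; crossing pairings gain additional smallness from the dispersive bound $|e^{-iu\Delta}(x,y)| \lsim (1+|u|)^{-d/2}$ on $\bbZ^d$. The total pairing count is factorial in $Nk$, but one organizes the sum by nesting level so that the count is absorbed into the $k^k$ and logarithmic factors. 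This is a scaled-down version of the combinatorics in~\cite{ESYRSE,hernandez2024quantum}, simpler here because we only need the bound up to the kinetic scale.

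Finally, to upgrade from a pointwise-in-$t$ statement to the uniform-in-$t$ claim, I would discretize $[0,\lambda^{-2}]$ into a net $\mathcal{T}$ of spacing $\eps=\lambda^{100}$, apply the pointwise tail with a union bound over the $|\mathcal{T}|\sim \lambda^{-102}$ net points, and close the gaps via the a priori Lipschitz estimate $\|E(t)-E(t')\|_{\ell^2\to\ell^2} \leq |t-t'|(\|H\|+\|\Delta\|) \leq C|t-t'|\log\lambda^{-1}$, valid on the overwhelming event $\|V\|_\infty\leq C\log\lambda^{-1}$. The union bound contributes $\log|\mathcal{T}|\sim \log\lambda^{-1}$, accounting for one of the two factors of $\log\lambda^{-1}$ in the final bound; the second enters through the pointwise moment estimate as discussed above.
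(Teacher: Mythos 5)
Your proposal is the classical \emph{moment method}: expand $\trace\bigl((E^*E)^k\bigr)$ via the Dyson series, Wick-pair, and control the resulting sum over diagrams. This is precisely the route the paper explicitly chooses \emph{not} to take. Right before Section~\ref{sec:NCK}, the notes observe that bounding $\Expec\|T_j\|_{op}^{2k}\leq\Expec\trace[(T_j^*T_j)^k]$ ``could be computed using a Wick expansion, involving the introduction of a combinatorial explosion of terms,'' and that while this was carried out in~\cite{hernandez2024quantum}, ``this is cumbersome and it is difficult to extract reasonable quantitative bounds.'' Instead, the paper uses two ideas: (i) the non-commutative Khintchine inequality applied directly to $T_1(t)$, which is \emph{linear} in $V$ and hence needs no Wick expansion at all; and (ii) a subadditivity/bootstrap trick (based on $T_k(s+t)=\sum_{j=0}^k T_j(s)T_{k-j}(t)$ and $\|AB\|\leq\|A\|\|B\|$) that bounds $\|T_k\|_{op}$ in terms of $\|T_1\|_{op}$ alone. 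This completely bypasses diagram counting.

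The gap in your proposal is hidden in the sentence ``one organizes the sum by nesting level so that the count is absorbed into the $k^k$ and logarithmic factors.'' This is exactly the hard part, and it is not a routine bookkeeping step. After truncating at order $N\sim\log\lambda^{-1}$, the trace $\trace\bigl((E^*E)^k\bigr)$ involves up to $M\sim Nk$ copies of $V$, and the number of Wick pairings is $(M-1)!!\sim (Nk)^{Nk}$, which is superexponentially larger than the $k^k$ budget you are aiming for. The works that actually carried out this bookkeeping~\cite{ESYRSE,ESYAnderson,ESYrecollision,Chen,hernandez2024quantum} required elaborate diagram classification, resummation of ladders, and delicate gains from crossings, and even then obtained much more restricted ranges. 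Asserting that the crossing diagrams ``gain additional smallness from the dispersive bound'' and that the factorial count is ``absorbed'' sweeps the entire technical core of the diagrammatic approach under the rug. There is also a secondary issue: the leading (ladder) diagrams do not terminate after one unitarity identity; one must track carefully that the trace over $\bbZ^d_L$ introduces a dimension factor $L^d$, which is why one must take $k\gtrsim\log L\sim\log\lambda^{-1}$ before the high-moment bound even translates into an operator-norm statement, and this interacts nontrivially with the factorial pairing count. The net argument at the end is fine and is in fact used (in a simpler subadditivity form) in the paper's exercises, but without a workable bound on the pairing sum the proposal does not close.
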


What we need in our application is a result on spectral projections.  To this end,
let $\chi\in C_c^\infty(\Real)$ be any smooth and compactly supported function.  We write $\chi(H)$
to mean the operator satisfying $\chi(H)\psi_E = \chi(E)\psi_E$ for any eigenfunction $H\psi_E = E\psi_E$
(since we are on $\bbZ^d_L$ anyway, $H$ is a finite dimensional matrix).  Then we define
$\chi_{\delta,E}(x) := \chi((x-E)/\delta)$.  Then we have the following result.
\begin{corollary}
\label{cor:projection-kinetic}
Let $L=\lambda^{-100}$ and $H = \Delta_L + \lambda V$.  Then with probability at least
$\exp(-cK^2)$ we have that
\[
\| \chi_{\delta,E}(H) - \chi_{\delta,E}(\Delta_L)\|_{\ell^2\to\ell^2}
\leq K (\log \lambda^{-1})^2 \lambda \delta^{-1/2}.
\]
\end{corollary}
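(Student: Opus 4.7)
The plan is to deduce the corollary from Theorem~\ref{thm:propagator-kinetic} via the functional calculus, using the Fourier representation of $\chi_{\delta,E}$ to convert a spectral bound into a bound on propagator differences that can be integrated in time.

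First I would write, by spectral calculus (valid since $H$ and $\Delta_L$ are bounded self-adjoint operators on the finite-dimensional space $\ell^2(\bbZ^d_L)$),
\[
\chi_{\delta,E}(H) - \chi_{\delta,E}(\Delta_L) = \int_{\Real} \widehat{\chi_{\delta,E}}(t) \bigl(e^{-itH} - e^{-it\Delta_L}\bigr) \diff t,
\]
with a suitable Fourier normalization. A change of variables gives $|\widehat{\chi_{\delta,E}}(t)| = \delta |\widehat{\chi}(\delta t)|$, and since $\chi\in C_c^\infty(\Real)$ its Fourier transform $\widehat{\chi}$ is Schwartz, so the moment
\[
M_\chi := \int_{\Real} |\widehat{\chi}(s)| \sqrt{|s|} \diff s
\]
is a finite constant depending only on $\chi$.

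Next I would condition on the single event of probability at least $\exp(-cK^2)$ provided by Theorem~\ref{thm:propagator-kinetic}, on which $\|e^{-itH}-e^{-it\Delta_L}\|_{\ell^2\to\ell^2} \leq K(\log\lambda^{-1})^2 \lambda \sqrt{|t|}$ \emph{simultaneously for all $t\in\Real$}. This uniformity in $t$ is exactly what allows me to plug the estimate into the integral above. Applying the triangle inequality gives
\[
\|\chi_{\delta,E}(H) - \chi_{\delta,E}(\Delta_L)\|_{\ell^2\to\ell^2}
\leq K(\log\lambda^{-1})^2 \lambda \int_{\Real} \delta \,|\widehat{\chi}(\delta t)|\, \sqrt{|t|}\, \diff t.
\]
The change of variables $s=\delta t$ turns the right-hand side into $K(\log\lambda^{-1})^2 \lambda \cdot \delta^{-1/2} M_\chi$, which is the claimed bound (with the implicit constant absorbed into $K$).

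There is no real obstacle here; the only thing to watch is that Theorem~\ref{thm:propagator-kinetic} is quantified over all $t$ on a single event, so the same event supports the integrated estimate with no further loss in the probability. The reason the argument succeeds is simply that the Schwartz decay of $\widehat{\chi}$ comfortably absorbs the $\sqrt{|t|}$ growth of the propagator bound, and the scaling $\delta \mapsto \delta^{-1/2}$ in the final estimate is dictated by the $\sqrt{t}$ dependence in the Duhamel cancellation input from~\eqref{eq:operator-norm}.
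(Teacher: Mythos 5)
Your argument is correct, and it supplies the proof that the paper leaves implicit: the paper states Corollary~\ref{cor:projection-kinetic} with no proof, treating it as an immediate consequence of Theorem~\ref{thm:propagator-kinetic}, and the Fourier-representation argument you give is exactly the natural (and surely intended) way to realize that implication. The two things you correctly flag are precisely the things that make it go through: Theorem~\ref{thm:propagator-kinetic} is quantified over all $t$ on one event, so the pointwise-in-$t$ bound can be integrated against $|\widehat{\chi_{\delta,E}}(t)|$ without a union bound; and the Schwartz decay of $\widehat{\chi}$ makes $\int |\widehat{\chi}(s)|\sqrt{|s|}\,\diff s$ finite, after which the change of variables $s=\delta t$ produces the $\delta^{-1/2}$ for free. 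The only cosmetic points worth keeping in mind are the Fourier normalization constant (which you note is absorbed) and the fact that the $\sqrt{t}$ in the theorem should be read as $\sqrt{|t|}$ since $t$ ranges over all of $\Real$, as you do.
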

The above result allows us to compare the spectral statistics of $\Delta_L$ with that of $H$ down
to intervals of width $\lambda^2$, and is the crucial ingredient in our approach to proving quantum diffusion.

Indeed, we use Corollary~\ref{cor:projection-kinetic} to establish the following less obvious consequence of Theorem~\ref{thm:propagator-kinetic}:
\begin{theorem}
\label{thm:resolvent-apriori}
Let $\eps>0$ be a small number and $N>0$ be a big number.  Suppose $E\in[-2d,2d]$ is not a critical value of the dispersion
relation $\omega$.
Then resolvent for $H =\Delta_L + \lambda V$ satisfies, for any $1\leq p\leq \frac65$ and $6\leq q\leq \infty$, the estimate
\[
\|R(E+i\eta)\|_{p\to q} \leq C_{\eps,N} \lambda^{-\eps} (\lambda^2\eta^{-1} + 1)
\]
with probability at least $1-C_{\eps,N} \lambda^N$.
\end{theorem}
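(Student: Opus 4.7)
I decompose the resolvent dyadically in spectral windows around $E$. Writing $R(E+i\eta) = \sum_{j\geq 0} R_j$ with $R_j$ the spectral localization of $R$ to $\{|\mu - E|\sim\delta_j\}$ for $\delta_j = 2^j\eta$, the pointwise bound $|(\mu - E - i\eta)^{-1}|\leq \delta_j^{-1}$ on this window yields $\|R_j\|_{p\to q}\lsim \delta_j^{-1}\|\chi_{\delta_j}(H)\|_{p\to q}$. The task thus reduces to a uniform $\ell^p \to \ell^q$ bound on the spectral projections $\chi_\delta(H)$ at each dyadic scale, followed by a dyadic summation.

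The key single-window estimate I aim for is $\|\chi_\delta(H)\|_{p\to q}\lsim \delta$ for $\delta\gtrsim\lambda^2$. For the free Laplacian, the hypothesis that $E$ is not a critical value of $\omega(\xi)=-2\sum_i\cos(\xi_i)$ guarantees that $\{\omega=E\}$ is a smooth hypersurface of non-vanishing Gaussian curvature, and restriction theory gives the Stein--Tomas estimate $\|\chi_\delta(\Delta_L)\|_{p\to q}\lsim \delta$; in $d=2$ the pair $(6/5,6)$ is exactly the Stein--Tomas endpoint, and the full range $p\leq 6/5,\ q\geq 6$ then follows from the monotonicity of discrete $\ell^r$-norms. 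To pass from $\Delta_L$ to $H$, I invoke Corollary~\ref{cor:projection-kinetic} and factor $\chi_\delta(H) = \tilde\chi_\delta(H)\,\chi_\delta(H)\,\tilde\chi_\delta(H)$ through a slightly fattened cutoff $\tilde\chi_\delta$ with $\tilde\chi_\delta\chi_\delta = \chi_\delta$. The outer factors satisfy $\|\tilde\chi_\delta(H)\|_{p\to 2},\ \|\tilde\chi_\delta(H)\|_{2\to q}\lsim \delta^{1/2}$ by interpolating the Stein--Tomas bound with the trivial $\ell^2\to\ell^2$ estimate, and the $\ell^2\to\ell^2$ error $\lsim\lambda\delta^{-1/2}$ from Corollary~\ref{cor:projection-kinetic} is absorbed by the crude discrete-space inequalities $\|A\|_{p\to 2},\ \|A\|_{2\to q}\leq \|A\|_{2\to 2}$ (valid for $p\leq 2\leq q$), which stays below $\delta^{1/2}$ precisely when $\delta\gtrsim\lambda^2$. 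This gives $\|R_j\|_{p\to q}\lsim 1$ per such scale.

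Summing over the dyadic scales $\delta_j\in[\max(\eta,\lambda^2),1]$ produces a factor $O(\log\lambda^{-1})\leq \lambda^{-\eps}$. When $\eta<\lambda^2$, I lump the remaining scales $\delta_j<\lambda^2$ into the single window $\chi_{\lambda^2}(H)$ and estimate
$\|\chi_{\lambda^2}(H)R(E+i\eta)\chi_{\lambda^2}(H)\|_{p\to q} \leq \|\chi_{\lambda^2}(H)\|_{2\to q}\,\eta^{-1}\,\|\chi_{\lambda^2}(H)\|_{p\to 2}\lsim \lambda\cdot\eta^{-1}\cdot\lambda = \lambda^2/\eta,$
using the one-sided $\delta^{1/2}$ bounds at $\delta=\lambda^2$ together with the trivial operator-norm bound $\|R(E+i\eta)\|_{2\to 2}\leq \eta^{-1}$ on the restricted range. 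Combining the two contributions matches the claimed $\lambda^{-\eps}(1+\lambda^2/\eta)$. The probability lower bound comes from a union bound over the $O(\log\lambda^{-1})$ dyadic scales, applying Corollary~\ref{cor:projection-kinetic} at each scale with $K\sim\sqrt{N\log\lambda^{-1}}$, which also absorbs the $K(\log\lambda^{-1})^2$ prefactor into $\lambda^{-\eps}$.

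\textbf{Main obstacle.} The substantive analytic content lies in the restriction estimate for $\chi_\delta(\Delta_L)$ at non-critical energies with the correct $\delta^1$ scaling: this is the one step that is external to the kinetic-scale machinery of Section~2 and is where the hypothesis on $E$ is genuinely used, via the non-vanishing curvature of the level sets of $\omega$. The other delicate point is keeping the Stein--Tomas gain intact when routing the $\ell^2\to\ell^2$ error of Corollary~\ref{cor:projection-kinetic} into the $\ell^p\to\ell^q$ norm; the factorization through a fattened spectral cutoff is what makes this accounting work for all $\delta\gtrsim\lambda^2$.
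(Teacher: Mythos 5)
There is a genuine gap in your per-scale estimate, and it is exactly at the step you flag as the main obstacle. You claim that the $\ell^2\to\ell^2$ error $\lambda\delta^{-1/2}$ from Corollary~\ref{cor:projection-kinetic} stays below the Stein--Tomas gain $\delta^{1/2}$ ``precisely when $\delta\gtrsim\lambda^2$'', but $\lambda\delta^{-1/2}\leq\delta^{1/2}$ is equivalent to $\delta\geq\lambda$, not $\delta\geq\lambda^2$. For $\lambda^2\lesssim\delta<\lambda$ your bound on the fattened projection degrades to $\|\tilde\chi_\delta(H)\|_{2\to 6}\lesssim\lambda\delta^{-1/2}$, hence $\|\chi_\delta(H)\|_{p\to q}\lesssim\lambda^2\delta^{-1}$ and $\|R_j\|_{p\to q}\lesssim\delta_j^{-1}\cdot\lambda^2\delta_j^{-1}=\lambda^2\delta_j^{-2}$. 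Summing your dyadic scales down to $\delta_j\sim\max(\eta,\lambda^2)$ then produces $\lambda^2\eta^{-2}$ (or $\lambda^{-2}$ at the bottom scale), which is far larger than the asserted $O(1)$ for $\eta\gtrsim\lambda^2$. The problem is structural: comparing $\chi_\delta(H)$ to $\chi_\delta(\Delta)$ window by window, and then absorbing the $\ell^2\to\ell^2$ defect by the crude embedding $\|\cdot\|_{2\to q}\le\|\cdot\|_{2\to 2}$, sacrifices the restriction gain exactly in the range of scales that matter.

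The paper avoids this by never asking for an $\ell^2\to\ell^q$ bound on individual spectral projections of $H$. Instead it writes $R^{1/2}=R_0^{1/2}\,(\Delta-z)^{1/2}R^{1/2}$ so that the only $2\to 6$ input is the free restriction bound on $R_0^{1/2}$, and all use of Corollary~\ref{cor:projection-kinetic} is confined to an $\ell^2\to\ell^2$ estimate for $(\Delta-z)^{1/2}R^{1/2}$. In the dyadic decomposition of that operator, the diagonal pieces $\Pi_k^0\Pi_k$ contribute $O(1)$ each, while the off-diagonal pieces are rewritten as $(\Pi_k^0-\Pi_k)\Pi_\ell$: the factor $2^{k/2}\eta^{1/2}$ from $(\Delta-z)^{1/2}$ exactly cancels the $2^{-k/2}\eta^{-1/2}$ coming from Corollary~\ref{cor:projection-kinetic}, and the additional suppression $2^{-\ell/2}\eta^{-1/2}$ from $\Pi_\ell R^{1/2}$ makes the sum over $\ell$ converge, yielding $\lambda\eta^{-1/2}\lesssim 1$ for all $\eta\gtrsim\lambda^2$. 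The final assembly ($\|R\|_{p\to q}\ldom\|R_0^{1/2}\|_{p\to 2}\|R_0^{1/2}\|_{2\to q}$ and the sandwich by $R^{\pm1/2}(E+i\lambda^2)$ to push below $\eta=\lambda^2$) is structurally close to your second paragraph, so your overall plan would go through once the single-projection $2\to 6$ bound $\|\chi_\delta(H)\|_{2\to 6}\lesssim\delta^{1/2}$ for all $\delta\gtrsim\lambda^2$ is established; but proving that ingredient requires routing the comparison to $\Delta$ through the resolvent square root as the paper does, rather than through a window-by-window projection comparison.
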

To work with the above theorem more concisely we introduce the following notation.  We say that $B$ \textit{stochastically dominates} $A$, written $A\ldom B$, if for any $\eps,N>0$ there is a $C_{\eps, N}$ such that
\[
A \leq C_{\eps, N} \lambda^{-\eps} B
\]
holds with probability at least $1-C_{\eps,N} \lambda^N$.
We will derive Theorem~\ref{thm:resolvent-apriori} from Corollary~\ref{cor:projection-kinetic} later in Section~\ref{sec:apriori}.
The bulk of this chapter is dedicated to the proof of Theorem~\ref{thm:propagator-kinetic}, which we sketch below.

To prove Theorem~\ref{thm:propagator-kinetic} we iterate~\eqref{eq:duhamel} to write out $e^{-itH} - e^{-it\Delta}$ as a Dyson series,
\begin{equation}
e^{-itH} - e^{-it\Delta} = \sum_{j=1}^\infty e^{-it\Delta} (i\lambda)^j T_j(t),
\end{equation}
where
\[
T_1(t) := \int_0^t e^{-is\Delta} V e^{is\Delta} \diff s
\]
and for $j\geq 1$ we have
\[
T_j(t) := \int_{0\leq s_1\leq \cdots \leq s_j\leq t} V(s_j) V(s_{j-1})\cdots V(s_1)\diff \vec{s},
\]
with $V(s) := e^{-is\Delta} V e^{is\Delta}$.

A simple idea that would work to bound $\Expec \|T_j\|_{op}^k$ would be to use the moment method,
using
\[
\Expec \|T_j\|_{op}^{2k} \leq\Expec  \|T_j^*T_j\|^k_{op} \leq\Expec  \trace (T_j^*T_j)^k.
\]
The expectation on the right could be computed using a Wick expansion, involving the introduction of
a combinatorial explosion of terms.  Such an analysis was indeed done in~\cite{hernandez2024quantum},
but this is cumbersome and it is difficult to extract reasonable quantitative bounds.

To avoid any diagrammatic expansion we use two ideas.
The first is to observe that $T_1(t)$ is a random symmetric
matrix that is \textit{linear} in the randomness $V$.  This allows us to import a standard tool from
random matrix theory, the non-commutative Khintchine inequality.  The noncommutative Khintchine
inequality is introduced and proven in Section~\ref{sec:NCK}.
The second idea is a remarkable
reduction which allows us to bound the operator norms of $T_j$ in terms of the operator norm of $T_1$.
For the second trick to work, it is crucial that we work in operator norm since we make use of
an approximate structure of $T_2$ (in particular, we will use the inequality $\|AB\| \leq \|A\|\|B\|$).

\subsection{The non-commutative Khintchine inequality}
\label{sec:NCK}
The presentation in this section closely follows Section 3.1 of van Handel's ``Structured Random Matrices''~\cite{van2017structured}.

The main ingredient we need is a bound on the operator norm of random matrices $X$ of the form
\begin{equation}
\label{eq:random-X}
X = \sum_{j=1}^s g_j A_j,
\end{equation}
where $g_j$ are independent Gaussian variables and $A_j$ are symmetric random $n\times n$ matrices.
Note that any random matrix $X$ with jointly Gaussian entries can be written in this way.

As examples, note that a GOE random matrix (having variance $2N^{-1}$ on the diagonal and variance $N^{-1}$ on the off-diagonal)
can be written as
\begin{equation}
\label{eq:XGOE-def1}
X_{\rm GOE} := \frac{1}{\sqrt{N}}\sum_{i\leq j} g_{ij} E_{ij}
\end{equation}
where
\[
E_{ij} := \begin{cases}
\sqrt{2}\ket{i}\bra{i}, & i=j \\
\ket{i}\bra{j}+\ket{j}\bra{i}, & i\not=j.
\end{cases}.
\]
Another example is a diagonal random matrix,
\begin{equation}
\label{eq:Xdiag-def}
X_{\rm diag} := \sum_{i} g_i \ket{i}\bra{i}.
\end{equation}

In both cases, one can verify that $\|X\|_{\rm op} \lsim \|\sum_j A_j^2\|^{1/2}$ with high probability, which
is a noncommutative version of the square root cancellation expected if $A_j$ were scalars.
\begin{theorem}[Non-commutative Khintchine inequality]
\label{thm:nck}
For matrices $X$ of the form~\eqref{eq:random-X},
\begin{equation}
(\Expec \trace[X^{2p}])^{1/2p} \leq \sqrt{2p-1} \Big(\trace[(\Expec X^2)^p]\Big)^{1/2p}
\end{equation}
\end{theorem}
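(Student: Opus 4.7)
My plan is to prove Theorem~\ref{thm:nck} by the moment method, combining Wick's formula with an elementary trace inequality. Writing $X = \sum_{j=1}^s g_j A_j$ and expanding multilinearly,
\[
\Expec \trace[X^{2p}] = \sum_{j_1,\ldots,j_{2p}} \Expec[g_{j_1}\cdots g_{j_{2p}}] \cdot \trace[A_{j_1}\cdots A_{j_{2p}}].
\]
Wick's formula for centered Gaussians gives $\Expec[g_{j_1}\cdots g_{j_{2p}}] = \sum_{\pi} \prod_{(a,b)\in\pi} \delta_{j_a j_b}$, where $\pi$ ranges over all pairings of $\{1,\ldots,2p\}$. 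Interchanging sums,
\[
\Expec \trace[X^{2p}] = \sum_{\pi \in \mathcal{P}_2(2p)} T_\pi, \qquad T_\pi := \sum_{\vec j \sim \pi} \trace[A_{j_1}\cdots A_{j_{2p}}],
\]
where $\vec j \sim \pi$ means $j_a = j_b$ for each pair $(a,b) \in \pi$.

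Next, I would show $T_\pi \leq \trace[B^p]$ for every pairing $\pi$, where $B := \Expec X^2 = \sum_j A_j^2$. For the planar pairing $\{(1,2),(3,4),\ldots,(2p-1,2p)\}$ the constraints force consecutive indices to agree, and collapsing paired factors into squares yields $T_\pi = \trace[B^p]$ exactly. For a general pairing the key tool is the elementary inequality $\trace[(ST)^2] \leq \trace[S^2 T^2]$, valid for symmetric $S,T$ by expanding the nonnegative quantity $\trace[(ST-TS)^*(ST-TS)]$. Combined with the cyclic property of the trace, this lets one iteratively ``uncross'' a crossing pair: at each step one bounds $T_\pi$ by $T_{\pi'}$ for a pairing $\pi'$ with strictly fewer crossings, and the recursion terminates at a planar pairing where equality with $\trace[B^p]$ holds.

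Finally, since $|\mathcal{P}_2(2p)| = (2p-1)!! = 1 \cdot 3 \cdots (2p-1)$ and each of its $p$ factors is at most $2p-1$, we have $(2p-1)!! \leq (2p-1)^p$. Combining the steps,
\[
\Expec \trace[X^{2p}] \leq (2p-1)!! \, \trace[B^p] \leq (2p-1)^p \, \trace[B^p],
\]
and taking $2p$-th roots gives the stated bound. The main obstacle is the second step: the uncrossing reduction must be carried out so that, after using cyclicity to bring a chosen crossing pair into position and then applying the symmetric-matrix trace inequality, the resulting expression is genuinely of the form $T_{\pi'}$ with strictly fewer crossings. Making this combinatorial bookkeeping precise—and checking that the intermediate matrices remain in a form for which the inequality applies—is the technical heart of the argument, while the Wick expansion and pairing count are essentially routine.
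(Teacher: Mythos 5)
Your overall strategy --- expand $\Expec\trace[X^{2p}]$ via Wick's formula into a sum over pairings, show $T_\pi \leq \trace[B^p]$ for each pairing $\pi$, and count pairings --- is a genuinely different route from the paper's, and it is in fact the route that yields the sharp constant $(2p-1)!!$ (this is due to Buchholz). The paper instead applies Gaussian integration by parts \emph{once} to get $\Expec\trace[X^{2p}] = \sum_{\ell=0}^{2p-2}\sum_j \Expec\trace[A_j X^{2p-2-\ell}A_j X^{\ell}]$, bounds each summand by $\Expec\trace[A_j^2 X^{2p-2}]$ using the H\"older-type Lemma~\ref{lem:trace-ineq} (which applies because the factor $A_j$ is a single symmetric matrix and what separates its two copies is a power of the single symmetric matrix $X$), and then closes a self-referential inequality with matrix H\"older. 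No pairing combinatorics enter; the full pairing sum is never formed.

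The gap in your proposal is the step $T_\pi \leq \trace[B^p]$, and it is not merely ``bookkeeping.'' Two things go wrong. First, $\trace[(ST)^2]\leq\trace[S^2T^2]$ requires both $S$ and $T$ symmetric, but in a general crossing pairing the blocks you would group into $S$ and $T$ are products of several $A_j$, hence not symmetric. For instance, the fully crossed pairing $\{(1,4),(2,5),(3,6)\}$ at $p=3$ gives $\sum_{k,l,m}\trace[(A_kA_lA_m)^2]$; taking $S=A_k$, $T=A_lA_m$, not only is $T$ not symmetric (so the inequality is unjustified), the would-be upper bound $\trace[S^2T^2]=\trace[A_k^2 A_lA_m A_lA_m]$ corresponds to a pairing that \emph{still crosses}. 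The bound you actually want here is $\trace[C^2]\leq\trace[CC^\intercal]$ for general $C$, which gives the non-crossing $\trace[A_k^2 A_l A_m^2 A_l]$ --- but this trick only applies to pairings that are cyclically a shift by $p$, which most are not (e.g.\ $\{(1,3),(2,4),(5,6)\}$ is already not of this form). Second, even granting a reduction to a non-crossing pairing, non-crossing pairings do not all give $\trace[B^p]$ on the nose: the nested $\{(1,6),(2,5),(3,4)\}$ yields $\sum_l \trace[B A_l B A_l]$, which is $\leq \trace[B^3]$ only after a further application of Lemma~\ref{lem:trace-ineq}, with equality only in the commuting case. So ``uncross, then read off equality at the planar diagram'' is not available; you would need both a working uncrossing inequality valid for non-symmetric blocks \emph{and} a de-nesting argument, and the latter is essentially the H\"older lemma the paper uses anyway. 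The paper's single GIBP step manufactures precisely the structure $\trace[A_j X^a A_j X^b]$ where that lemma applies directly, which is what makes the argument short.
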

This result is due to Lust-Piquard and Pisier~\cite{lust1986inegalites,lust1991non}.

Before we prove Theorem~\ref{thm:nck} we note that it implies a concentration inequality on the operator norm of $X$, up to a logarithmic loss in the dimension.
\begin{corollary}
\label{cor:nck-op-bd}
Let $X$ be of the form~\eqref{eq:random-X} and let $n$ be the dimension of $X$ (so $X$ is an $n\times n$ matrix).  Then
\begin{equation}
\label{eq:expec-op}
\Expec \|X\|_{op} \lsim \sqrt{\log (n)} \|\sum_{j=1}^s A_j^2\|_{op}^{1/2}.
\end{equation}
Moreover, for $\alpha > 4$ we have the bound
\begin{equation}
\label{eq:conc-op}
\Prob(\|X\|_{op} \geq \alpha \sqrt{\log(n)} \|\sum_{j=1}^s A_j^2\|_{op}^{1/2}) \leq
\exp(- \frac{\log 2}{8} \alpha^2 \log n).
\end{equation}
\end{corollary}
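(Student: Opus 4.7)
The natural plan is to pass from the trace moments furnished by Theorem~\ref{thm:nck} to the operator norm via the elementary inequality
\[
\|X\|_{op}^{2p} \leq \trace(X^{2p}),
\]
valid because $X^{2p}$ is positive semidefinite. Combined with Theorem~\ref{thm:nck}, this yields
\[
\bigl(\Expec \|X\|_{op}^{2p}\bigr)^{1/2p} \leq \bigl(\Expec \trace X^{2p}\bigr)^{1/2p} \leq \sqrt{2p-1}\,\bigl(\trace[(\Expec X^2)^p]\bigr)^{1/2p}.
\]
A direct computation using independence of the $g_j$ and $\Expec g_j^2 = 1$ gives $\Expec X^2 = \sum_j A_j^2$, so $\trace[(\Expec X^2)^p] \leq n\, \|\sum_j A_j^2\|_{op}^p$. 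Thus
\begin{equation}
\bigl(\Expec \|X\|_{op}^{2p}\bigr)^{1/2p} \leq \sqrt{2p-1}\, n^{1/2p}\, \|\textstyle\sum_j A_j^2\|_{op}^{1/2}. \tag{$\ast$}
\end{equation}

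To prove~\eqref{eq:expec-op}, I would take $p = \lceil \log n \rceil$, making $n^{1/2p}$ an $O(1)$ factor and $\sqrt{2p-1} \lsim \sqrt{\log n}$. Jensen then gives $\Expec\|X\|_{op} \leq (\Expec\|X\|_{op}^{2p})^{1/2p}$, which is the desired bound.

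For the concentration inequality~\eqref{eq:conc-op}, the plan is to combine $(\ast)$ with Markov's inequality:
\[
\Prob\bigl(\|X\|_{op} \geq t\bigr) \leq t^{-2p}\, (2p-1)^p\, n\, \|\textstyle\sum_j A_j^2\|_{op}^p.
\]
Setting $t = \alpha \sqrt{\log n}\, \|\sum_j A_j^2\|_{op}^{1/2}$ and choosing $p = \lfloor \alpha^2 \log n / 4\rfloor$ makes the ratio $(2p-1)/(\alpha^2 \log n) \leq 1/2$, so the right hand side is bounded by $n \cdot 2^{-p}$, which for $\alpha > 4$ simplifies to the claimed exponential tail $\exp(-\tfrac{\log 2}{8} \alpha^2 \log n)$ after absorbing the $\log n$ factor (this is where the assumption $\alpha > 4$ gets used, since then $\alpha^2 \log 2/4 - 1 \geq \alpha^2\log 2/8$).

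The proof is essentially mechanical once Theorem~\ref{thm:nck} is in hand; the only nontrivial piece is choosing $p$ to optimize the tradeoff between the $\sqrt{2p-1}$ factor and the $n^{1/2p}$ dimensional loss. I do not anticipate a serious obstacle, but the concentration statement does require picking $p$ depending on $\alpha$ (not just on $n$) and verifying the elementary inequality between the exponents, which is the one place where bookkeeping is needed.
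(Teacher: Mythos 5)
Your proposal is correct and follows essentially the same route as the paper's: bound $\|X\|_{op}^{2p}\leq\trace X^{2p}$, apply Theorem~\ref{thm:nck}, take $p\sim\log n$ for~\eqref{eq:expec-op}, and combine with Markov at a larger $p$ for~\eqref{eq:conc-op}. The only cosmetic difference is that the paper chooses $k=\tfrac18\alpha^2\log n$ and bounds $n^{1/k}<2$ before comparing, whereas you take $p\approx\tfrac14\alpha^2\log n$ and absorb the dimensional factor $n$ into the exponent at the end; both are valid (the paper's choice leaves a bit more slack in the constants).
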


To see where the factor of $\sqrt{\log{n}}$ comes from, observe that with $X_{\rm diag}$ defined as in~\eqref{eq:Xdiag-def},
\[
\|X_{\rm diag}\|_{op} = \sup_{1\leq i\leq n} |g_i|.
\]
This is on the order $\sqrt{\log n}$ with high probability.  Note that in this case $\sum_i A_i^2 = \Id$.
On the other hand, for the GOE ensemble~\eqref{eq:XGOE-def1},
\[
N^{-1} \sum_{ij} E_{ij}^2 = \frac{N+1}{N} \Id,
\]
so that the non-commutative Khintchine inequality implies
\[
\|X_{\rm GOE}\|_{op} \lsim \sqrt{\log N}.
\]
This is lossy, as the truth is that the largest eigenvalue of $X_{\rm GOE}$ is close to $2$ with high probability.   In general,
determining the precise dependence on dimension is an interesting problem, but it is completely irrelevant to our analysis which is insensitive to logarithms.

\noindent
\textbf{Exercise. } Derive Corollary~\ref{cor:nck-op-bd} from Lemma~\ref{thm:nck}.
\Sol{
\begin{proof}
For an $n\times n$ symmetric matrix $X$, we have
\begin{equation}
\|X\|_{op}^{2p} \leq \trace[ X^{2p}] = \sum_{j=1}^n \sigma_j^{2p} \leq n \|X\|_{op}^{2p}.
\end{equation}
Taking $2n$-th roots we obtain $\|X\|_{op} \simeq \trace[X^{2p}]^{1/2p}$ for $p \gtrsim \log n$.
Applying Theorem~\ref{thm:nck} directly with $p=\log n$ we obtain~\eqref{eq:expec-op}.

Now we prove the concentration inequality~\eqref{eq:conc-op}.  For simplicity we set
$B = \sum_{j=1}^s A_j^2$.
To get the concentration inequality we use $\|X\|_{op}^{2k} \leq \trace X^{2k}$
and apply Theorem~\ref{thm:nck} as follows:
\begin{equation}
\begin{split}
\Prob(\|X\|_{op} \geq \alpha \sqrt{\log n} \|B\|_{op})
&\leq \alpha^{-2k} (\log n)^{-k}  \|B\|_{op}^{-k}
\Expec(\|X\|_{op}^{2k}) \\
&\leq \alpha^{-2k} (\log n)^{-k}  \|B\|_{op}^{-k}
\Expec(\trace[X^{2k}]) \\
&\leq \alpha^{-2k} (\log n)^{-k}  \|B\|_{op}^{-k}
(2k)^k \trace[B^k] \\
&\leq \Big(\frac{2k n^{1/k}}{\alpha^2 \log n}\Big)^k.
\end{split}
\end{equation}
Now take $k = \frac{1}{8}\alpha^2 \log n$.  For $\alpha > 4$, $k > 2\log n$ so $n^{1/k} < 2$.
In particular $2kn^{1/k} < 4k = \frac{1}{2}\alpha^2 \log n$ so the fraction in brackets is bounded by $\frac{1}{2}$.
\end{proof}
}

\subsubsection{Proof of the non-commutative Khintchine inequality}

The key point of the proof of Theorem~\ref{thm:nck} is the following bound, which can be interpreted as saying that the
worst case is that $A_j$ all commute.
\begin{lemma}
\label{lem:trace-ineq}
For any symmetric matrices $A$ and $B$, and $0\leq \ell\leq 2p-2$
\begin{equation}
\trace[A B^{2p-2-\ell} A B^\ell] \leq \trace[A^2 B^{2p-2}].
\end{equation}
\end{lemma}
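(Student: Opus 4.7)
The natural move is to diagonalize $B$. Let $\{e_i\}$ be an orthonormal eigenbasis of $B$ with eigenvalues $\mu_i \in \Real$, and set $a_{ij} := \Braket{e_i, A e_j}$; symmetry of $A$ gives $|a_{ij}|^2 = |a_{ji}|^2$. A direct expansion in this basis produces
\[
\trace[AB^{2p-2-\ell}AB^\ell] = \sum_{i,j}|a_{ij}|^2\, \mu_i^{\ell}\mu_j^{2p-2-\ell},
\qquad
\trace[A^2 B^{2p-2}] = \sum_{i,j}|a_{ij}|^2\, \mu_j^{2p-2}.
\]
By the $i\leftrightarrow j$ symmetry of the weights $|a_{ij}|^2$, the second quantity also equals $\sum_{i,j}|a_{ij}|^2\mu_i^{2p-2}$, and hence equals any convex combination of these two expressions.

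Setting $m := 2p-2$ and taking the $(\ell/m,(m-\ell)/m)$ combination on the right, the lemma reduces to the pointwise bound
\[
\mu_i^{\ell}\,\mu_j^{m-\ell} \;\leq\; \tfrac{\ell}{m}\,\mu_i^{m} + \tfrac{m-\ell}{m}\,\mu_j^{m}, \qquad 0\leq \ell\leq m,
\]
which, once established, is summed against the nonnegative weights $|a_{ij}|^2$ to produce the claim. After the harmless estimate $\mu_i^\ell \mu_j^{m-\ell} \leq |\mu_i|^\ell|\mu_j|^{m-\ell}$, this pointwise inequality is exactly weighted AM--GM (equivalently, Young's inequality with conjugate exponents $m/\ell$ and $m/(m-\ell)$) applied to the nonnegative reals $|\mu_i|^m$ and $|\mu_j|^m$; the endpoints $\ell \in \{0,m\}$ hold with equality.

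I expect no real obstacle; the one delicate point is sign handling, and it is resolved by the parity of $m = 2p-2$. Because $m$ is even, $|\mu|^m = \mu^m$, so the AM--GM bound, which naturally lives in absolute values, is equivalent to the claimed inequality involving $\mu^m$ directly. This parity is essential: if $2p-2$ were replaced by an odd integer the statement itself would fail, as one sees by taking $B$ to have a single large negative eigenvalue and $A$ a rank-one matrix supported on the corresponding eigenvector.
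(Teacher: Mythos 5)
Your proof is correct, and it follows the same strategy as the paper's: diagonalize $B$, expand both traces as weighted sums $\sum_{i,j}|a_{ij}|^2(\cdots)$ over pairs of eigenvalues, and close with a conjugate-exponent inequality. The small difference is \emph{where} that inequality is applied. The paper invokes H\"older on the full double sum, splitting $\sum b_j^\ell b_{j'}^{m-\ell}|a_{jj'}|^2$ with exponents $m/\ell$ and $m/(m-\ell)$ (here $m=2p-2$), then identifies each resulting factor with $\trace[A^2B^m]$. You instead apply weighted AM--GM (Young) \emph{pointwise} to each summand, bounding $|\mu_i|^\ell|\mu_j|^{m-\ell}\leq\frac{\ell}{m}|\mu_i|^m+\frac{m-\ell}{m}|\mu_j|^m$, and then sum against the weights $|a_{ij}|^2$, using the $i\leftrightarrow j$ symmetry of the weights to recognize both pieces as the same trace. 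These two routes are near-equivalent (termwise Young summed against weights is one of the standard proofs of H\"older), but yours is slightly more elementary and has the virtue of making the sign issue explicit: the paper's H\"older step silently passes to absolute values and uses that $|b_j|^{m}=b_j^m$ for even $m$, whereas you spell out that this parity is exactly what makes the argument go through.

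One small caveat on your closing aside: the specific counterexample you sketch for odd $m$ --- $B$ with a single large negative eigenvalue and $A$ rank-one on that eigenvector --- actually gives \emph{equality}, since both sides collapse to the same scalar. The statement does genuinely fail for odd $m$, but one needs $A$ to have off-diagonal support linking a negative eigenvalue of $B$ to a positive one (e.g.\ $m=3$, $\ell=1$, $B=\diag(-10,1)$, $A=\ket{1}\bra{2}+\ket{2}\bra{1}$ gives $b_1b_2(b_1+b_2)>b_1^3+b_2^3$). This does not affect the correctness of your proof, which is sound.
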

\begin{proof}
We can always rotate into a basis so that $B$ is diagonal with entries $B_{jk} = b_j \delta_{jk}$.  Then the trace
can be estimated using Holder's inequality as follows:
\begin{equation}
\begin{split}
\trace[A B^{2p-2-\ell} A B^\ell] &=
\sum_{j,j'} b_j^\ell b_{j'}^{2p-2-\ell} |a_{j,j'}|^2 \\
&\leq \Big(\sum_{j,j'} |b_j|^{2p-2} |a_{j,j'}|^2\Big)^{\ell/(2p-2)}
\Big(\sum_{j,j'} |b_{j'}|^{2p-2} |a_{j,j'}|^2\Big)^{(2p-2-\ell)/(2p-2)}.
\end{split}
\end{equation}
Now we recognize each sum in the product above as being equal to $\trace[A B^{2p-2} A] = \trace[A^2 B^{2p-2}]$.
\end{proof}

\begin{lemma}[Matrix Jensen's inequality]
Let $A$ be a symmetric matrix and let $\varphi:\Real\to\Real$ be a convex function.  Then
\begin{equation}
\sum_{j=1}^d \varphi(a_{jj})  \leq \trace[\varphi(A)].
\end{equation}
\end{lemma}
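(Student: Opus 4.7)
The plan is to diagonalize $A$ and reduce the matrix inequality to a collection of scalar Jensen inequalities for convex combinations of the eigenvalues. Write $A = U D U^*$ where $U$ is orthogonal and $D = \diag(\lambda_1,\ldots,\lambda_d)$ holds the eigenvalues of $A$. Then $\varphi(A) = U \varphi(D) U^*$ by the functional calculus, so $\trace[\varphi(A)] = \sum_k \varphi(\lambda_k)$.

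The key observation is that each diagonal entry $a_{jj}$ is a convex combination of the eigenvalues. Indeed,
\[
a_{jj} = (UDU^*)_{jj} = \sum_{k=1}^d |U_{jk}|^2 \lambda_k,
\]
and the coefficients $p_{jk} := |U_{jk}|^2$ satisfy $p_{jk} \geq 0$ and $\sum_k p_{jk} = 1$ because $U$ is orthogonal (its rows are unit vectors).

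Applying the scalar Jensen's inequality to the convex function $\varphi$ with the probability weights $\{p_{jk}\}_k$ gives
\[
\varphi(a_{jj}) = \varphi\Big(\sum_k p_{jk} \lambda_k\Big) \leq \sum_k p_{jk}\,\varphi(\lambda_k).
\]
Summing over $j$ and swapping the order of summation,
\[
\sum_{j=1}^d \varphi(a_{jj}) \leq \sum_{k=1}^d \varphi(\lambda_k) \sum_{j=1}^d p_{jk} = \sum_{k=1}^d \varphi(\lambda_k) = \trace[\varphi(A)],
\]
where in the penultimate equality we used that the columns of $U$ are also unit vectors, so $\sum_j |U_{jk}|^2 = 1$.

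There is no real obstacle here beyond correctly identifying the doubly stochastic structure coming from orthogonality of $U$; the argument is essentially the textbook derivation of the Schur--Horn style majorization statement that the diagonal of a symmetric matrix is majorized by its spectrum, specialized to convex test functions.
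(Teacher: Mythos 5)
Your proof is correct and is essentially the same as the paper's: both diagonalize $A$, write each diagonal entry $a_{jj}$ as the convex combination $\sum_k |U_{jk}|^2 \lambda_k$, and apply scalar Jensen's inequality before summing over $j$. Your version is a touch more explicit in spelling out the doubly stochastic structure (using both row and column orthonormality of $U$), but the underlying argument is identical.
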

\begin{proof}
Write $A = Q \Lambda Q^*$ where $\Lambda$ is diagonal, and $q_{jk}$ are the entries of $Q$.  Then
\[
a_{jj} = \sum_k \lambda_k |q_{jk}|^2.
\]
Then by Jensen's inequality (using that $\sum_k |q_{jk}|^2 = 1$ for all $j$,
\begin{equation}
\begin{split}
\sum_j \varphi(a_{jj})
&= \sum_j \varphi\Big( \sum_k \lambda_k |q_{jk}|^2\Big) \\
&\leq \sum_j  \sum_k \varphi(\lambda_k) |q_{jk}|^2 \\
&= \sum_k \varphi(\lambda_k) = \trace[\varphi(A)].
\end{split}
\end{equation}
\end{proof}

\begin{lemma}[Matrix Holder's inequality]
\label{lem:holders-ineq}
For symmetric matrices $A$ and $B$ and $\frac{1}{p} + \frac{1}{p'} = 1$ with $p>1$,
\begin{equation}
\trace[AB] \leq \trace[|A|^p]^{1/p} \trace[|B|^{p'}]^{1/p'}.
\end{equation}
\end{lemma}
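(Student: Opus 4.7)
The plan is to derive the matrix Hölder inequality from a matrix Young inequality by the standard scalar-case strategy. The intermediate claim I would first establish is
\begin{equation*}
\trace[AB] \leq \frac{1}{p}\trace[|A|^p] + \frac{1}{p'}\trace[|B|^{p'}]
\end{equation*}
for symmetric $A$ and $B$ and conjugate exponents $p, p' > 1$.

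For this matrix Young inequality, the key idea is to diagonalize the two matrices independently. I would write $A = U\Lambda U^*$ and $B = VMV^*$ with $\Lambda = \diag(\lambda_i)$ and $M = \diag(\mu_j)$, and set $W = U^*V$, which is unitary. The trace then expands as
\[
\trace[AB] = \trace[\Lambda W M W^*] = \sum_{i,j} \lambda_i\, \mu_j\, |w_{ij}|^2.
\]
Apply the scalar Young inequality $\lambda_i\mu_j \leq \frac{1}{p}|\lambda_i|^p + \frac{1}{p'}|\mu_j|^{p'}$ entrywise. The crucial structural fact is that, because $W$ is unitary, the matrix with entries $|w_{ij}|^2$ is doubly stochastic: its row and column sums are all $1$. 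Summing the entrywise Young bounds therefore collapses the double sum cleanly to $\frac{1}{p}\sum_i|\lambda_i|^p + \frac{1}{p'}\sum_j|\mu_j|^{p'}$, which is the claimed right-hand side since $\trace[|A|^p] = \sum_i|\lambda_i|^p$ and similarly for $B$.

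From matrix Young, Hölder follows by scaling. Excluding the trivial case where one of $A$ or $B$ vanishes, I would normalize $\tilde{A} = A/\trace[|A|^p]^{1/p}$ and $\tilde{B} = B/\trace[|B|^{p'}]^{1/p'}$ so that $\trace[|\tilde{A}|^p] = \trace[|\tilde{B}|^{p'}] = 1$. Matrix Young applied to $\tilde{A}, \tilde{B}$ then gives $\trace[\tilde{A}\tilde{B}] \leq \frac{1}{p} + \frac{1}{p'} = 1$, which undoes to the desired bound.

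I do not anticipate a serious obstacle here — the argument is a clean matrix analogue of the classical scalar derivation, with the double-stochasticity of $(|w_{ij}|^2)$ playing the role that integration against the uniform measure plays in the scalar proof. An alternative route would appeal to Von Neumann's trace inequality $|\trace[AB]| \leq \sum_i \sigma_i(A)\sigma_i(B)$ combined with scalar Hölder on the singular values, but the diagonalization argument above stays within the elementary toolbox already employed in the proofs of Lemma~\ref{lem:trace-ineq} and the Matrix Jensen inequality above, and does not require any machinery beyond spectral decomposition and the unitarity of a change of basis.
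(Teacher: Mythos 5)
Your proof is correct, but it follows a genuinely different route from the paper's. The paper diagonalizes only $B$ (by rotating the basis), writes $\trace[AB]=\sum_j b_{jj}a_{jj}$, applies the \emph{scalar} H\"older inequality directly to this sum, and then finishes with the already-established Matrix Jensen inequality (with $\varphi(x)=|x|^p$) to bound $\sum_j|a_{jj}|^p\leq\trace[|A|^p]$; the diagonal sum $\sum_j|b_{jj}|^{p'}$ equals $\trace[|B|^{p'}]$ exactly since $B$ is diagonal. You instead diagonalize \emph{both} matrices, reduce the trace to $\sum_{i,j}\lambda_i\mu_j|w_{ij}|^2$, prove a matrix Young inequality using the double stochasticity of $(|w_{ij}|^2)$ for unitary $W$, and then pass from Young to H\"older by the usual normalization. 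Both arguments are elementary. The paper's proof is shorter because it recycles Matrix Jensen, which was proven just above and is the real content (indeed, your double-stochasticity argument is essentially a re-proof of that lemma in disguise); your version is self-contained and does not require Matrix Jensen as a prior ingredient, at the cost of the extra Young-to-H\"older scaling step and diagonalizing twice. One small remark: you should make explicit the step $\lambda_i\mu_j\leq|\lambda_i|\,|\mu_j|$ before invoking scalar Young, since the eigenvalues are signed; you clearly have this in mind but it is worth stating.
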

\begin{proof}
Without loss of generality we can take $B$ to be diagonal.  Then
\begin{equation}
\trace[AB] = \sum_{j} b_{jj} a_{jj}
\end{equation}
The result now follows from Holder's inequality, since $\sum |a_{jj}|^p \leq \trace[|A|^p]$.
\end{proof}

\begin{proof}[Proof of Theorem~\ref{thm:nck}]
We use the Gaussian integration by parts formula $\Expec gf(g) = \Expec f'(g)$ for Gaussians $g$ to compute
\begin{equation}
\begin{split}
\Expec[\trace[X^{2p}]]
&= \sum_{j=1}^s \Expec[ g_j \trace[A_j X^{2p-1}]] \\
&= \sum_{\ell=0}^{2p-2} \sum_{j=1}^s \Expec[\trace[A_j X^{2p-2-\ell}A_jX^{\ell}]] \\
&\leq (2p-1) \Expec [\trace[(\sum_{j=1}^s A_j^2) X^{2p-2}]] \\
&\leq (2p-1) \trace[ (\Expec X^2)^p]^{1/p} \trace[\Expec X^{2p}]^{1-\frac1p}.
\end{split}
\end{equation}
In the last step we have used the matrix Holder's inequality and the identity $\sum_{j=1}^s A_j^2 = \Expec X^2$.  The
proof now follows from rearranging.
\end{proof}

Let us take a moment to reflect on the proof of the non-commutative Khintchine inequality.  The identity
\[
\Expec \trace[X^{2p}] = \sum_{\ell=0}^{2p-2} \sum_j \Expec \trace[A_j X^{2p-2-\ell} A_j X^{\ell}]
\]
can be interpreted diagrammatically.  Indeed, the Wick expansion dictates that the left hand side can be expanded as a sum over the
$(2p-1)!!$ perfect matchings of $[2p]$.  The right hand side can be thought of as a decomposition of these matchings according to
where the first matching got sent.  The inequality
\[
\trace[A_j X^{2p-2-\ell} A_j X^{\ell}]
\leq
\trace[A_j^2 X^{2p-2}],
\]
combined with the Holder inequality, allows us to argue that the worst case matching is the one in which each
matrix factor gets paired with its neighbor.  In particular it allows us to ``uncross'' all of the diagrams.
This inequality is in fact an equality when all of the $A_j$ commute (indeed, in this case
$A_jX^a A_j X^b = A_j^2 X^{a+b}$).  On the other hand in the GOE example, the $E_{ij}$ matrices are very much non-commutative,
so the inequality is very lossy.

\subsubsection{Applying non-commutative Khintchine to $T_1(t)$}
Now we turn back to the random Schrodinger equation on $\bbZ^d_L$.  In this section we
write $\Delta = \Delta_L = \Delta_{\bbZ^d_L}$.

Recall that we need a bound on the first collision operator,
\begin{equation}
T_1(t) = \int_0^t e^{is\Delta} V e^{-is\Delta} \diff s,
\end{equation}
in operator norm.  Note that $T_1(t)$ can be written in the form
\[
T_1(t) = \sum_{j\in\bbZ^d_L} g_j A_j(t),
\]
where
\[
A_j(t) = \int_0^t e^{is\Delta} \ket{j}\bra{j} e^{-is\Delta}\diff s,
\]
and $\ket{j}\bra{j}$ is the rank-one projection onto the site at $j\in\bbZ^d_L$.

We have set things up so that we can direcly apply the non-commutative Khintchine inequality directly
to $T_1(t)$:
\begin{lemma}
\label{lem:simple-Q-bd}
With $L=\lambda^{-100}$, operator $T_1(t)$ satisfies the estimates
\begin{equation}
\Expec( \|T_1(t)\|_{op} ) \lsim
\begin{cases}
(\log \lambda^{-1}) \sqrt{t \log t}, & d=2 \\
(\log \lambda^{-1}) \sqrt{t}, & d\geq 3.
\end{cases}
\end{equation}
for $t\ll R$.
\end{lemma}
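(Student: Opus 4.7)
The plan is to apply Corollary~\ref{cor:nck-op-bd} directly: since $T_1(t) = \sum_{j\in\bbZ^d_L} g_j A_j(t)$ is already in the form~\eqref{eq:random-X}, the non-commutative Khintchine bound reduces the problem to estimating the operator norm of the ``variance matrix'' $\sum_j A_j(t)^2$, with a prefactor $\sqrt{\log n}\lsim \log\lambda^{-1}$, where $n=L^d=\lambda^{-100d}$.

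To compute $\sum_j A_j(t)^2$, I would expand the square and use translation invariance of $\Delta_L$ to factor out the diagonal matrix element $\bra{j}e^{i(s_2-s_1)\Delta}\ket{j}=K_{s_2-s_1}$, the return amplitude $K_u := (e^{iu\Delta})_{00}$, which is independent of $j$. Applying the resolution of identity $\sum_j\ket{j}\bra{j}=\Id$ then collapses the expression to
\begin{equation*}
\sum_j A_j(t)^2 = \int_0^t\int_0^t K_{s_2-s_1}\, e^{i(s_1-s_2)\Delta}\diff s_1\diff s_2.
\end{equation*}
Since $\|e^{iu\Delta}\|_{op}=1$, the triangle inequality reduces the problem to bounding $t\int_{-t}^t|K_u|\diff u$.

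The remaining ingredient is the standard dispersive decay $|K_u|\lsim (1+|u|)^{-d/2}$, which follows from stationary phase applied to the nondegenerate critical points of the dispersion relation $\omega(\xi)=2\sum_i\cos\xi_i$. This yields $\int_{-t}^t|K_u|\diff u\lsim 1$ for $d\geq 3$ and $\lsim \log t$ for $d=2$, giving $\|\sum_j A_j(t)^2\|_{op}\lsim t$ and $t\log t$ respectively. Taking square roots and multiplying by the $\log\lambda^{-1}$ prefactor recovers the claim.

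The main subtlety I anticipate is transferring the dispersive estimate from $\bbZ^d$ to the torus $\bbZ^d_L$: recurrence at times comparable to $L^2$ destroys the pointwise decay, and this is presumably what the hypothesis $t\ll R$ in the statement encodes. Since $L=\lambda^{-100}$ is polynomially larger than any time scale relevant later in the paper, this is not a serious obstruction but has to be checked carefully. A secondary minor point is the logarithmic loss at the van Hove singularities of $\omega$, which can be absorbed into implicit constants without affecting the exponents.
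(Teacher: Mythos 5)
Your proof is correct and follows essentially the same route as the paper: apply the non-commutative Khintchine bound (Corollary~\ref{cor:nck-op-bd}), use translation invariance and the resolution of identity $\sum_j\ket{j}\bra{j}=\Id$ to reduce $\sum_j A_j(t)^2$ to a double time integral against the return amplitude, bound by the triangle inequality, and invoke the dispersive decay $(1+|\tau|)^{-d/2}$ with the $d=2$ case picking up a $\log t$. Your remark about the torus-versus-$\bbZ^d$ transfer is a point the paper glosses over, and you are right that it is what the hypothesis on $t$ is quietly encoding.
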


We have made two simplifications in the statement of the above result.  The first is that we did not
state the tail bounds for $\|T_1(t)\|$, but this follows from just applying the correct version of the
non-commutative Khintchine inequality (with the tail bound, not just the point estimate).  The second
is that we have obtained a bound only for a single (arbitrary) time $t$.  It is possible to improve this
to a bound on all $t\ll \lambda^{-10}$ by a union bound argument.

Now we prove Lemma~\ref{lem:simple-Q-bd} by a more or less direct computation.
\begin{proof}
By the non-commutative Khintchine inequality, it suffices to bound the operator norm of
\begin{equation}
\label{eq:B-int}
B(t) := \sum_j A_j(t)^2 = \sum_j \int_0^t \int_0^t e^{is'\Delta} \ket{j}\bra{j}
e^{i(s-s')\Delta} \ket{j}\bra{j} e^{-is\Delta}\diff s \diff s'.
\end{equation}
Using that $\sum_j \ket{j}\bra{j} = \Id$ and that $\braket{j | e^{i\sigma \Delta} |j }
= \braket{0 | e^{i\sigma\Delta}|0} = f_d(t)$, we can write
\[
B(t) = \int_0^t \int_0^t f_d(s-s') e^{i(s'-s)\Delta}\diff s\diff s',
\]
so that
\[
\|B(t)\|_{op} \leq \int_0^t \int_0^t |f_d(s-s')|\diff s\diff s' \leq t \int_{-t}^t |f_d(\sigma)|\diff \sigma.
\]
A Fourier-analysis computation shows that
$|\braket{0 | e^{i\tau \Delta_{\bbZ^d}} | 0}|\lsim (1+|\tau|)^{-d/2}$.  On $d=2$ this is borderline
integrable and we pick up a factor of $\log t$, and otherwise this is bounded directly by $t$.  The
proof is now completed with an application of NCK.
\end{proof}

\Exercise{Show that $\|T_1(t+s)\|_{op} \leq \|T_1(t)\|_{op}+\|T_1(s)\|_{op}$.  Deduce that
for $T\geq 1$,
\[
\sup_{t\in[0,T]}\|T_1(t)\|_{op} \leq
\sup_{t\in[0,1]}\|T_1(t)\|_{op} + \sum_{j=1}^{\lceil \log_2 T\rceil} \|T_1(2^j)\|_{op}.
\]
Using this and a union bound one may turn an estimate for $\|T_1(t)\|_{op}$ at a single time into an estimate for $\sup_{t\in[0,T]} \|T_1(t)\|_{op}$.
}

\subsection{Bounds on $T_k(t)$ from $T_1(t)$}
\label{sec:Tkarg}

Now we use the bounds on $T_1$ to prove bounds on the remaining terms in the Duhamel expansion.  By iterating the Duhamel formula we obtain the identity
\begin{equation}
\label{eq:duhamel-exp}
e^{-itH} = e^{-itH_0} + \sum_{j=1}^k (i\beta)^j T_j(t) + i\beta \int_0^t e^{-i(t-s)H} V T_k(s)\diff s,
\end{equation}
where $T_j$ is the operator defined by $T_0(t) = e^{-itH_0}$ and recursively using
\begin{equation*}
T_j(t) = \int_0^t e^{-i(t-s)H_0} V T_{j-1}(s)\diff s.
\end{equation*}

For simplicity we will see how to deal with $T_2(t)$, which is given by
\[
T_2(t) = \int_{0\leq s_1\leq s_2\leq t} V(s_2)V(s_1)\diff s_1\diff s_2.
\]
We can think of $T_2(t)$ as the integral over a triangular region in $\Real^2$ of the operators
$V(s_2)V(s_1)$, as in Figure~\ref{fig:T2}.

\begin{figure}
\centering
\begin{tikzpicture}
\draw[->] (-0.2, 0) -- (4.5, 0) node[right] {};
\draw[->] (0, -0.2) -- (0, 4.5) node[above] {};

\node at (2, -0.3) {$\frac{t}{2}$}; \node at (-0.2,2) {$\frac{t}{2}$};
\node at (4.3, -0.2) {$s_1$}; \node at (-0.2,4.2) {$s_2$};
\fill[fill=blue!30] (0,2) rectangle (2,4);
\fill[orange!40] (0,0) -- (2,2) -- (0,2) -- cycle;
\fill[orange!40] (2,2) -- (4,4) -- (2,4) -- cycle;
\draw[dashed] (0,2) -- (4,2);
\draw[dashed] (2,0) -- (2,4.0);
\draw[thick] (0,0) -- (4,4) -- (0,4) -- cycle;
\end{tikzpicture}
\caption{A schematic for the decomposition of the operator $T_2(t)$.  The region in the blue square corresponds to a term of the form $T_1(t/2)^2$.}
\label{fig:T2}
\end{figure}
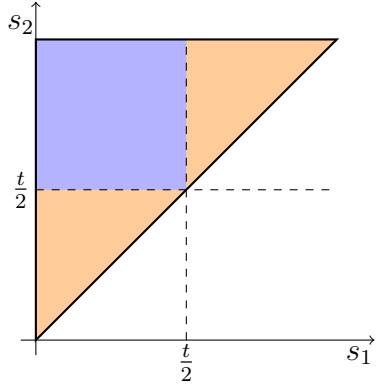
We can subdivide this triangle into a square of side
length $t/2$ and two smaller triangles.  The integral over the square corresponds to the operator
\begin{align*}
\int_{t/2}^t \int_{0}^{t/2} V(s_2) V(s_1)\diff s_1 \diff s_2
&= \Big( \int_{t/2}^t V(s_2)\diff s_2 \Big) \Big(\int_0^{t/2} V(s_1)\diff s_1\Big) \\
&= e^{-it\Delta/2} T_1(t/2) e^{it\Delta/2} T_1(t/2).
\end{align*}
The triangle in the lower left integrates to the operator $T_2(t/2)$ and the triangle in the upper
right integrates to $e^{-it\Delta/2} T_2(t/2)e^{it\Delta/2}$.  Since $e^{it\Delta}$ is unitary
we have the following bound in operator norm:
\[
\|T_2(t)\|_{op} \leq \|T_1(t/2)\|_{op}^2 + 2 \|T_2(t/2)\|_{op}.
\]
Iterating this bound yields
\[
\|T_2(t)\|_{op} \leq 2t \sup_{s\in [0,1]}\|T_2(s)\|_{op}
+ \sum_{j=1}^{\lfloor \log_2 t\rfloor} 2^{j-1} \|T_1(t/2^j)\|_{op}^2.
\]
Each of the terms in the sum is of order $\lessapprox t$, so that we conclude
\[
\|T_2(t)\|_{op} \lessapprox t,
\]
where $\lessapprox$ is hiding a logarithmic factor in $t$.

In the exercises below we show how to extend this to higher order terms $T_j$.

\Exercise{Prove the decomposition formula
\begin{equation}
\label{eq:Tkdecomp}
T_k(s+t) = \sum_{j=0}^k T_j(s)T_{k-j}(t).
\end{equation}}

\Exercise{Using the decomposition formula above, observe the bound
\begin{equation}
\|T_k(t)\|_{op} \leq \sum_{\substack{\vec{m} \in \bbN_{\geq 0}^k \\ \sum m_j = k}}
\prod_{j=1}^k \|T_{m_j}(t/k)\|_{op}.
\end{equation}
Assume that
\[
\|T_j(t)\|_{op} \leq (C_j A^2 (t\log t))^{k/2}
\]
for $j<k$, and let $\bar{C} := \max_{j<k} C_j$.  Using the above bound, prove that
\[
\|T_k(t)\|_{op} \leq (4k^{-1/2} A \bar{C} (t\log t)^{1/2})^k.
\]
}

\Exercise{Using the above exercise, conclude that
\[
\|T_k(t)\|_{op} \leq (C k^{-1} A^2 (t\log t))^{k/2}
\]
with probability at least $\exp(-cA^2)$.
}

By summing the above estimate for $T_k(t)$, we obtain a result for $e^{-itH}$.
\begin{theorem}
\label{thm:Tkbds}
The family of estimates
\begin{equation}
\label{eq:Tkopbd}
\|T_k(t)\|_{op} \leq (C \alpha k^{-1} \log(R)(t\log t))^{k/2}
\end{equation}
holds with probability at least $\exp(-c\alpha^2)$.
In particular, for $t \ll (\log \beta^{-1})^{-1} \beta^{-2}$, it follows that
\begin{equation}
\label{eq:op-compare}
\|e^{-itH} - e^{-itH_0}\|_{op} \lsim \beta |\log\beta| \sqrt{t}.
\end{equation}
\end{theorem}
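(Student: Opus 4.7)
The plan is to prove~\eqref{eq:Tkopbd} by induction on $k$, using Corollary~\ref{cor:nck-op-bd} applied to $T_1$ as the base case and the subdivision identity~\eqref{eq:Tkdecomp} to propagate to higher orders. Once~\eqref{eq:Tkopbd} is in hand, the bound~\eqref{eq:op-compare} on $e^{-itH}-e^{-itH_0}$ drops out by plugging the estimates into the Dyson expansion~\eqref{eq:duhamel-exp} and summing a geometric-like series that converges precisely when $t$ is below the kinetic time $\beta^{-2}$.

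For the base case, Lemma~\ref{lem:simple-Q-bd} combined with the tail part of Corollary~\ref{cor:nck-op-bd} gives $\|T_1(t)\|_{op} \lsim \alpha\,\log(L)\,\sqrt{t\log t}$ at any single $t$, with probability at least $1-\exp(-c\alpha^2)$. The first exercise after Lemma~\ref{lem:simple-Q-bd} promotes this to a uniform-in-time bound: the subadditivity $\|T_1(t+s)\|_{op} \leq \|T_1(t)\|_{op}+\|T_1(s)\|_{op}$ and a dyadic decomposition reduce the problem to controlling $\|T_1\|_{op}$ on $O(\log T)$ dyadic scales, and a union bound then costs only an extra $\log$ absorbed into the constant. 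This establishes~\eqref{eq:Tkopbd} for $k=1$ and all $t\in[0,T]$ with $T\leq L$.

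For the inductive step, I assume $\|T_j(t)\|_{op} \leq (C_j A^2\, t\log t)^{j/2}$ for all $j<k$, with $A=C\alpha\log L$. Iterating~\eqref{eq:Tkdecomp} and splitting $[0,t]$ into $k$ equal subintervals yields
\[
\|T_k(t)\|_{op} \leq \sum_{\substack{\vec m\in\bbN_{\geq 0}^k\\ \sum m_j = k}} \prod_{j=1}^k \|T_{m_j}(t/k)\|_{op}.
\]
The number of such weak compositions is $\binom{2k-1}{k-1}\leq 4^k$, and because the exponents sum to $k$, the inductive hypothesis bounds each product by $(\bar C\,A^2\,(t/k)\log t)^{k/2}$ where $\bar C = \max_{j<k} C_j$. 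Combining gives $\|T_k(t)\|_{op} \leq (4 k^{-1/2}\,A\,\bar C^{1/2}\,(t\log t)^{1/2})^k$, which closes the induction. The crucial feature is the gain of $k^{-1/2}$: because every $T_1$-factor now acts on an interval of length $t/k$ rather than $t$, it contributes $\sqrt{t/k}$ rather than $\sqrt{t}$.

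Feeding~\eqref{eq:Tkopbd} into the Dyson series, the $k$-th term is bounded by $(C\alpha k^{-1}\log(L)\,\beta^2 t\log t)^{k/2}$; for $t\ll(\log\beta^{-1})^{-1}\beta^{-2}$ this base is $\ll 1$ for every $k\geq 2$, so the whole sum is dominated by its $k=1$ term, giving~\eqref{eq:op-compare}. The remainder integral in~\eqref{eq:duhamel-exp} is handled identically by taking the truncation order large. I expect the main obstacle to be the inductive step: a naive binary split via $T_k(t)=\sum_j T_j(t/2)T_{k-j}(t/2)$ would lose the $k^{-1/2}$ gain, and without it the Dyson series fails to converge at exactly the kinetic threshold one needs to reach. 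Preserving this improvement forces the balanced $k$-fold subdivision and careful tracking of the combinatorial prefactor $\binom{2k-1}{k-1}$ so that it does not devour the gain.
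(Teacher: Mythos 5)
Your proposal follows essentially the same route the paper takes: the paper proves the result via the sequence of exercises in Section 2.2, which is exactly the program you carry out — base case for $T_1$ from non-commutative Khintchine plus the dyadic union bound, the $k$-fold subdivision identity $T_k(s+t)=\sum_j T_j(s)T_{k-j}(t)$ applied with $[0,t]$ cut into $k$ equal pieces, the weak-composition count $\binom{2k-1}{k-1}\leq 4^k$, and the resummation of the Dyson series (equation~\eqref{eq:duhamel-exp}) using the superexponential factor $k^{-k/2}$ to push past $t\sim\beta^{-2}$. Your closing remark correctly identifies why the balanced $k$-fold split is forced: iterating the paper's binary decomposition of the $T_2$ triangle would give $\|T_k(t)\|_{op}\lessapprox (A^2 t\log t)^{k/2}$ without the $k^{-1}$ in the base, and since the per-step loss $A\sim\log L$ is large, the Dyson series would then only close at $t\lesssim(\log L)^{-2}\beta^{-2}$ rather than the stated kinetic threshold. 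One small wrinkle worth flagging: the exponent sum $\sum m_j=k$ is preserved across all compositions, so the inductive hypothesis gives $(\bar C A^2 (t/k)\log t)^{k/2}$ per product, and after extracting the $4^k$ you get $\bar C^{1/2}$ in the base rather than $\bar C$ — the exercise statement's $\bar C$ appears to be a typo, so your version is in fact the correct one. The assertion that the base is $\ll 1$ already at $k=2$ is a bit optimistic (it really becomes small for $k\gtrsim\alpha\log L$ and the smaller terms are absorbed by the implied constant), but this matches the level of precision of the theorem statement itself, which is loose about constants and log factors.
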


\subsection{Bounds for spectral projections}
\label{sec:apriori}
To conclude the chapter we show how to derive Theorem~\ref{thm:resolvent-apriori} from Corollary~\ref{cor:projection-kinetic}.
First, we list some easy consequences of Corollary~\ref{cor:projection-kinetic} (which we leave to the reader to state
precisely and prove if desired):
\begin{itemize}
\item On $\bbZ^d/L\bbZ^d$ with $L=\lambda^{-100}$, the spectrum of $H$ is very likely to be contained in $[-2d-\eps,2d+\eps]$ for $\eps = (\log\lambda^{-1})^C\lambda^2$.
\item Any eigenfunction $H\psi = E\psi$ has its Fourier transform localized (in an $\ell^2$ sense) to the set
$\{|\omega(\xi)-E|\lsim \lambda^2\}$.
\item As a consequence of the above item, if $E$ is not a critical value of $\omega$ and $H\psi=E\psi$ then $\psi$ cannot have too
much of its $\ell^2$ mass in any ball of radius $r\ll (\log\lambda^{-1})^{-C}\lambda^{-2}$.
\end{itemize}

What we need to reach diffusive timescales (and length scales) are $\ell^p\to\ell^q$ mapping properties of the resolvent.
At the moment what we have are estimates of the form
\[
\|\Pi_{\delta,E}(H) - \Pi_{\delta,E}(\Delta)\|_{2\to 2} \lessapprox \lambda \delta^{-1/2}.
\]
It may be surprising at first that we can say anything about $\|R(z)\|_{p\to q}$ using only this bound.  The first observation
is that we can transfer bounds when $p=2$ or $q=2$.  In fact, we have the following result:
\begin{lemma}
For $z=E+i\eta$ with $\eta \gtrsim \lambda^{-2}$,
\[
\|R^{1/2}\|_{2\to X} \ldom \|R_0^{1/2}\|_{2\to X}
\]
\end{lemma}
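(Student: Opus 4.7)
The plan is to represent both square-root resolvents as propagator integrals via the subordination identity, reducing the lemma to Theorem~\ref{thm:propagator-kinetic}. Applying $u^{-1/2}=\frac{e^{i\pi/4}}{\sqrt\pi}\int_0^\infty t^{-1/2}e^{-iut}\diff t$ (valid for $\Im u<0$, obtained by contour rotation of the standard $\Gamma$-representation) spectrally to $u=H-z$, and using that $\Im(E_k-z)=-\eta<0$ for every eigenvalue $E_k$ of $H$, yields
\[
R(z)^{1/2}=\frac{e^{i\pi/4}}{\sqrt\pi}\int_0^\infty t^{-1/2} e^{itz} e^{-itH}\diff t,
\]
and the identical formula for $R_0(z)^{1/2}$ upon replacing $H$ by $\Delta$. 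Subtracting turns the resolvent difference into a $t^{-1/2}e^{-\eta t}$-weighted time integral of $e^{-itH}-e^{-it\Delta}$, which is precisely the object Theorem~\ref{thm:propagator-kinetic} controls.

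I would then execute the bound as follows. Move $\|\cdot\|_{2\to 2}$ inside the integral, use $|e^{itz}|=e^{-\eta t}$, and invoke Theorem~\ref{thm:propagator-kinetic} together with the trivial bound $\|e^{-itH}-e^{-it\Delta}\|_{op}\leq 2$, giving $\|e^{-itH}-e^{-it\Delta}\|_{2\to 2}\ldom \min(\lambda\sqrt t,1)$. Splitting the $t$-integral at the kinetic time $t^\star\sim\lambda^{-2}$, the main contribution is $\lambda\int_0^{t^\star}e^{-\eta t}\diff t\lsim \lambda/\eta$, with the tail subdominant as soon as $\eta t^\star\gtrsim 1$. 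Since we are on the discrete torus $\bbZ^d_L$, the inclusion $\|\cdot\|_q\leq\|\cdot\|_2$ for $q\geq 2$ passes the bound directly to
\[
\|R(z)^{1/2}-R_0(z)^{1/2}\|_{2\to X}\ldom \lambda/\eta.
\]
The triangle inequality then gives $\|R^{1/2}\|_{2\to X}\leq \|R_0^{1/2}\|_{2\to X}+O(\lambda/\eta)$, and in the regime $\eta\gtrsim\lambda^{-2}$ the error $\lambda/\eta$ is of order $\lambda^3$, comfortably smaller than the natural size $\|R_0^{1/2}\|_{2\to X}\sim \eta^{-1/2}\sim\lambda$ obtained when $z$ is far enough from the spectrum that $R_0(z)\approx -z^{-1}\Id$ in every $\ell^p\to\ell^q$ norm.

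The main technical obstacle is that the crude transfer $\|\cdot\|_X\leq\|\cdot\|_2$ is lossy when $X$ is much larger than $\ell^2$, so the absorption of $\lambda/\eta$ into $\|R_0^{1/2}\|_{2\to X}$ (rather than into $\|R_0^{1/2}\|_{2\to 2}$) requires that the two operator norms of $R_0^{1/2}$ be comparable. In the large-$\eta$ regime of the hypothesis this comparability is essentially automatic, but I expect a close variant of the lemma to be needed for much smaller $\eta$ in the sequel, and there the cleaner route would be a spectral partition of unity $(x-z)^{-1/2}=\sum_k f_k(x)$ with $f_k$ a smooth bump of scale $\eta$ centered at $E_k$ and $\|f_k\|_\infty\lesssim |E_k-z|^{-1/2}$. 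One applies (an amplitude-sensitive form of) Corollary~\ref{cor:projection-kinetic} to each $\|f_k(H)-f_k(\Delta)\|_{2\to 2}\ldom\lambda\eta^{-1/2}|E_k-z|^{-1/2}$ and then aggregates via the almost-orthogonality of $\{\chi_k(H)\}$ and $\{\chi_k(\Delta)\}$ in a square-function manner, so as to preserve the scale-by-scale structure of $R_0^{1/2}$ and deliver the relative bound directly.
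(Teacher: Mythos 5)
The hypothesis $\eta\gtrsim\lambda^{-2}$ is a typo in the paper for $\eta\gtrsim\lambda^{2}$: immediately after the lemma, the text applies it at $z_0=E+i\lambda^2$ in order to factor through to smaller $\eta$ via~\eqref{eq:small-eta-R}, and the proof's use of Corollary~\ref{cor:projection-kinetic} at the finest scale $\delta\sim\eta$ needs $\lambda\eta^{-1/2}\lesssim 1$, i.e.\ $\eta\gtrsim\lambda^2$. With that correction in mind, your primary route does not close. The subordination identity and the resulting bound $\|R^{1/2}-R_0^{1/2}\|_{2\to2}\ldom\lambda/\eta$ are fine, but the crude transfer $\|\cdot\|_{2\to X}\leq\|\cdot\|_{2\to 2}$ followed by the triangle inequality hands you an \emph{additive} error of size $\lambda/\eta\sim\lambda^{-1}$ at $\eta\sim\lambda^2$, whereas the quantity you must beat, $\|R_0^{1/2}\|_{2\to 6}\lsim\log\eta^{-1}$ (Tomas--Stein), is only logarithmic. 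The relative error is thus $\sim\lambda^{-1}/\log\lambda^{-1}$, which is enormous. You flagged this yourself (``the crude transfer is lossy when $X$ is much larger than $\ell^2$''), so you knew the regime you had actually proved was the trivial one.

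Your fallback sketch heads in the right direction but misses the key structural identity that makes the argument both clean and sharp: the paper factors
\[
R^{1/2}=R_0^{1/2}\,\bigl[(\Delta-z)^{1/2}R^{1/2}\bigr],
\]
so that $\|R^{1/2}\|_{2\to X}\leq\|R_0^{1/2}\|_{2\to X}\,\|(\Delta-z)^{1/2}R^{1/2}\|_{2\to 2}$, and the whole lemma reduces to proving $\|(\Delta-z)^{1/2}R^{1/2}\|_{2\to 2}\ldom 1$. This is the precise formulation of ``preserving the scale-by-scale structure of $R_0^{1/2}$'': the $\|R_0^{1/2}\|_{2\to X}$ factor is pulled out multiplicatively, and everything that remains to be controlled lives in $\ell^2\to\ell^2$. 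At that point the dyadic spectral partition $1=\sum_j\chi_j$ and Corollary~\ref{cor:projection-kinetic} give $\|\Pi^0_k-\Pi_k\|\ldom\lambda 2^{-k/2}\eta^{-1/2}$, the diagonal terms sum to a logarithm, and the off-diagonal blocks are small since $\Pi^0_k\Pi_\ell=(\Pi^0_k-\Pi_k)\Pi_\ell$ for $k\neq\ell$. Your proposed aggregation of $\|f_k(H)-f_k(\Delta)\|_{2\to 2}$ bounds without first pulling out $R_0^{1/2}$ has no obvious way to reassemble into a bound that still carries a full $\|R_0^{1/2}\|_{2\to X}$ prefactor, which is exactly the issue the factorization circumvents.
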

\begin{proof}[Proof sketch]
We write
\[
R^{1/2} = R_0^{1/2} (\Delta-z)^{1/2} R^{1/2}.
\]
So then it suffices to prove that
\[
\|(\Delta-z)^{1/2}R^{1/2}\|_{2\to 2} \ldom  1.
\]
We write out a partition of unity
\[
1 = \sum_j \chi_j
\]
where $\chi_0$ is a smooth cutoff to the interval $[E-\eta,E+\eta]$, and for $j\geq 1$
$\chi_j$ is supported on $|t-E|\sim 2^j\eta$.  Write $\Pi^0_j := \chi_j(\Delta)$ and
$\Pi_j := \chi_j(H)$.  Then
\[
(\Delta-z)^{1/2} R^{1/2} = \sum_{k,\ell} (\Delta-z)^{1/2} \Pi^0_k \Pi_\ell R^{1/2}.
\]
The diagonal terms contribute a logarithmic factor:
\[
\|\sum_k (\Delta-z)^{1/2}\Pi^0_k \Pi_kR^{1/2}\|
\lsim \sum_k 2^{k/2} \eta^{1/2} \eta^{-1/2} 2^{-k/2}\lsim \sum_k 1 \lsim \log\eta^{-1}.
\]
For the off-diagonal terms we can use Corollary~\ref{cor:projection-kinetic}.  For example, for the terms $k > \ell$ we write
\[
\Pi^0_k \Pi_\ell = \Pi_k \Pi_\ell + (\Pi^0_k-\Pi_k)\Pi_\ell = (\Pi^0_k-\Pi_k) \Pi_\ell,
\]
so that
\[
\|\Pi^0_k\Pi_\ell\|_{2\to 2} \leq \|\Pi^0_k-\Pi_k\| \ldom \lambda 2^{-k/2}\eta^{-1/2}.
\]
Therefore,
\[
\|(\Delta-z)^{1/2} \Pi_0^k \Pi_\ell R^{1/2}\|_{2\to 2}
\lsim (2^{k/2}\eta^{1/2}) (\lambda 2^{-k/2}\eta^{-1/2}) (2^{-\ell/2} \eta^{-1/2}
\lsim \lambda 2^{-\ell/2} \eta^{-1/2},
\]
which is a convergent sum in $\ell$.
\end{proof}

Now we explain how to use this.  Of course the above lemma also implies
\[
\|R^{1/2}\|_{X\to 2} \ldom \|R_0^{1/2}\|_{X\to 2}
\]
by duality.  Therefore for any $p\leq 2\leq q$ and $R=R(E+i\eta)$ and $\eta \gtrsim\lambda^2$,
\[
\|R\|_{p\to q} \ldom \|R_0^{1/2}\|_{p\to 2}\|R_0^{1/2}\|_{2\to q}.
\]
Moreover, if $z_0 = E+i\lambda^2$ and $z=E+i\eta$ for $\eta \ll \lambda^2$, we can write
\begin{equation}
\label{eq:small-eta-R}
R(z) = R^{1/2}(z_0) (R^{-1/2}(z_0) R(z) R^{-1/2}(z_0)) R^{1/2}(z_0).
\end{equation}
The operator in the brackets in the middle satisfies
\[
\|R^{-1/2}(z_0) R(z) R^{-1/2}(z_0)\|_{2\to 2} \leq \lambda^2\eta^{-1},
\]
so that for any $\eta>0$ we have the estimate
\[
\|R(z)\|_{p\to q} \ldom (\lambda^2\eta^{-1} + 1)\|R_0(z_0)\|_{p\to 2} \|R_0(z_0)\|_{2\to q}.
\]

\subsubsection{Resolvent bounds for $\Delta$ and $H$}
At this point it is clear that we need estimates for $\|R_0^{1/2}(E+i\eta)\|_{2\to q}$.  We can first decompose $R_0^{1/2}$ into
spectral projections,
\[
R_0^{1/2} = \sum_j R_0^{1/2} \chi_j(\Delta),
\]
where $\chi_j$ on a window of width $2^j\delta$.  If we had an estimate of the form
\[
\|\chi_j(\Delta)\|_{2\to q} \lsim 2^{-j/2} \eta^{1/2},
\]
then by summing over $j$ it would follow that
\[
R_0^{1/2} \lsim \log \eta^{-1}.
\]
Now $\chi_j(\Delta)$ can be explicitly written as a Fourier multiplier:
\[
\Ft{\chi_j(\Delta) f} (\xi) = \chi_j(\omega(\xi)) \Ft{f}(\xi).
\]

For $\chi_j$ we have the following estimate, which is essentially equivalent to the Tomas-Stein restriction estimate~\cite{TomasStein},
and we repeat the proof below.
\begin{lemma}[Tomas-Stein estimate]
Let $\chi$ be a bounded function supported in an interval $I = [E_0-\eps,E_0+\eps]$ not containing any critical values of
$\omega$.  Then on $d\geq 2$ we have the estimate
\[
\|\chi(\Delta)\|_{2\to 6} \lsim \eps^{1/2} \log\eps^{-1}.
\]
\end{lemma}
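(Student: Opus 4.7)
The plan is to apply the classical Stein--Tomas $TT^*$ method together with a dyadic decomposition of the kernel in physical space. The operator $\chi(\Delta)$ is the Fourier multiplier on $\mathbb{T}^d$ with symbol $\chi(\omega(\xi))$, and hence acts as convolution on $\ell^2(\mathbb{Z}^d)$ with the kernel
\[
K(x) = \int_{\mathbb{T}^d} \chi(\omega(\xi))\, e^{ix\cdot\xi}\, d\xi.
\]
Using the coarea formula together with stationary phase on the level hypersurfaces $\Sigma_s = \{\omega = s\}$---which are smooth with non-vanishing principal curvatures throughout the $\eps$-neighborhood $I$ since $E_0$ is not a critical value of $\omega$---I would derive the pointwise kernel decay $|K(x)| \lesssim \eps\, (1+|x|)^{-(d-1)/2}$.

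Since $\chi$ is real, $\chi(\Delta)$ is self-adjoint and the $TT^*$ identity gives $\|\chi(\Delta)\|_{\ell^2\to \ell^6}^2 = \|\chi^2(\Delta)\|_{\ell^{6/5}\to \ell^6}$. Because $\chi^2$ still satisfies the hypotheses of the lemma, it is enough to show $\|\chi(\Delta)\|_{\ell^{6/5}\to \ell^6} \lesssim \eps \log\eps^{-1}$. To this end I would dyadically decompose $K = \sum_{j \geq 0} K_j$ with $K_j$ smoothly localized to $|x| \sim 2^j$, and use the two complementary bounds
\[
\|K_j\|_{\ell^1 \to \ell^\infty} \lesssim \eps\, 2^{-j(d-1)/2}, \qquad \|K_j\|_{\ell^2\to \ell^2} = \|\widehat{K_j}\|_\infty \lesssim \min(1,\eps\, 2^j).
\]
The first is immediate from the pointwise decay of $K$; the second comes from writing $\widehat{K_j} = \chi(\omega) * \widehat{\phi_j}$ and computing the intersection of the $\eps$-thick shell $\{|\omega - E_0| \lesssim \eps\}$ with a ball of radius $2^{-j}$ in $\xi$-space. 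Riesz--Thorin interpolation at $\theta = 1/3$ hits $\ell^{6/5}\to \ell^6$ with operator norm $\lesssim (\eps\, 2^{-j(d-1)/2})^{2/3}\min(1, \eps\, 2^j)^{1/3}$. Summing over $j$ split at the critical scale $j^* \sim \log_2 \eps^{-1}$, each regime contributes $\eps$ up to a single log factor in the borderline case $d=2$ (and the geometric decay removes the log entirely for $d\geq 3$); taking square roots of the $TT^*$ identity then yields the advertised bound.

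The main obstacle I anticipate is the $\ell^2 \to \ell^2$ bound $\|\widehat{K_j}\|_\infty \lesssim \min(1, \eps\, 2^j)$, since the trivial estimate gives only the uniform-in-$j$ bound $\|\widehat{K_j}\|_\infty \leq 1$, which would lose a power of $\eps$ and yield only $\eps^{1/3}$ in the end. Getting the right $\eps$-dependence requires carefully exploiting the shell structure of the support of $\chi(\omega)$ in the regime $2^j \ll \eps^{-1}$, where the momentum bump $\widehat{\phi_j}$ is wider than the shell thickness so that only the shell measure $\sim \eps$ (rather than the full $L^1$ mass of $\widehat{\phi_j}$) contributes. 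With this two-sided estimate in place, the remaining dyadic sum is routine and the logarithm in the final bound originates entirely from the critical regime $j \sim \log_2 \eps^{-1}$.
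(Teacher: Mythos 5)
Your proof is a valid alternative to the paper's, and the underlying mechanism is the same one viewed from the dual side. You perform the dyadic decomposition \emph{of the kernel in physical space} ($K=\sum_j K_j$, $K_j$ localized to $|x|\sim 2^j$) and extract the $\ell^2\to\ell^2$ bound from the width of $\widehat{\phi_j}$ in momentum space, whereas the paper decomposes the \emph{spectral multiplier} into Littlewood--Paley pieces $\phi_k$ at scale $2^{-k}$ around $E_0$; the kernel of $\phi_k(\Delta)$ is then automatically concentrated at $|x|\sim 2^k$, and the $\ell^2\to\ell^2$ bound $\|\phi_k\|_\infty\lesssim\eps 2^k$ is immediate rather than a convolution computation. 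The two decompositions are in near-perfect duality (your $K_j$ plays the role of the kernel of the paper's $\phi_j(\Delta)$), both reduce via $TT^*$ to $\ell^{6/5}\to\ell^6$, both interpolate a $1\to\infty$ kernel bound against a $2\to 2$ bound, and both split the sum at the critical scale $j^*\sim\log_2\eps^{-1}$. What yours buys is that the $2\to 2$ bound becomes explicit, at the cost of making the physical localization an honest truncation you must control rather than a consequence of the symbol decomposition.

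There is, however, a genuine error in your kernel estimate. You assert $|K(x)|\lesssim\eps(1+|x|)^{-(d-1)/2}$ on the grounds that the level sets $\Sigma_s=\{\omega=s\}$ ``are smooth with non-vanishing principal curvatures \ldots since $E_0$ is not a critical value of $\omega$.'' Being away from critical values gives $\nabla\omega\neq 0$, hence smooth hypersurfaces, but it gives no control on the second fundamental form. For the lattice dispersion (e.g.\ $\omega(\xi)=\sum_j 2\cos\xi_j$ on $\mathbb{T}^d$), the Gaussian curvature of $\Sigma_s$ genuinely vanishes on a nontrivial subset when $d\geq 3$, so the classical full stationary-phase decay $|x|^{-(d-1)/2}$ fails. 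The paper is careful here: it only invokes \emph{one} direction of non-vanishing curvature (which does persist away from the critical values $\Sigma_{\mathrm{crit}}$) and accordingly only claims $|x|^{-1/2}$. Fortunately this does not break your proof: if you replace $(d-1)/2$ by $1/2$ throughout, the interpolated bound in the regime $j\leq j^*$ becomes exactly $\eps$ for every $d\geq 2$ (no longer geometrically decaying), the sum over $j\leq j^*$ contributes $\eps\log\eps^{-1}$, and the tail $j>j^*$ still sums to $O(\eps)$, giving $\|\chi^2(\Delta)\|_{6/5\to 6}\lesssim\eps\log\eps^{-1}$ and hence the lemma. What you lose is the claimed log-free improvement for $d\geq 3$ --- that improvement is not available on the lattice with this hypothesis, and the logarithm you attribute only to the ``borderline case $d=2$'' is in fact present for every $d$.
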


As a consequence of this lemma and the ``transfer'' argument, we have established Theorem~\ref{thm:resolvent-apriori}, which
we restate below.
\begin{theorem}
\label{thm:resolvent-apriori2}
The resolvent for $H =\Delta_L + \lambda V$ satisfies, for any $1\leq p\leq \frac65$, $6\leq q\leq \infty$,
\[
\|R\|_{p\to q} \ldom \lambda^2\eta^{-1} + 1.
\]
\end{theorem}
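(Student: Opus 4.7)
The plan is to combine the transfer lemma with the Tomas–Stein estimate, and then patch in the $\lambda^2\eta^{-1}$ factor for small $\eta$ using the resolvent identity~\eqref{eq:small-eta-R}.

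First I would establish the analogous bound for the free resolvent $R_0(E+i\eta)$ with $\eta \gtrsim \lambda^2$. Writing $1 = \sum_{j\geq 0} \chi_j$ as a dyadic partition on scales $2^j\eta$ around $E$, I have
\[
R_0^{1/2}(E+i\eta) = \sum_j R_0^{1/2}\,\chi_j(\Delta),
\]
and on the support of $\chi_j$ the operator norm of $R_0^{1/2}$ is comparable to $(2^j\eta)^{-1/2}$. The Tomas–Stein estimate then yields
\[
\|R_0^{1/2}\,\chi_j(\Delta)\|_{2\to 6} \lsim (2^j\eta)^{-1/2}\cdot (2^j\eta)^{1/2}\log\eta^{-1} \lsim \log\eta^{-1}.
\]
However summing naively in $j$ would be disastrous, so I would instead write the dyadic pieces carefully: the $L^2\to L^6$ norm of $R_0^{1/2}\chi_j(\Delta)$ is really controlled by $(2^j\eta)^{-1/2}$ times the $L^2\to L^6$ norm of $\chi_j(\Delta)$, which Tomas–Stein bounds by $(2^j\eta)^{1/2}\log(2^j\eta)^{-1}$. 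The cancellation in the factors of $2^j\eta$ makes each term $O(\log\eta^{-1})$, but to sum I restrict to the $O(\log \lambda^{-1})$ scales that matter (since for $2^j\eta \gtrsim 1$ the operator norm of $R_0^{1/2}\chi_j$ becomes negligible). This gives $\|R_0^{1/2}\|_{2\to 6} \ldom 1$. By duality, $\|R_0^{1/2}\|_{6/5\to 2}\ldom 1$, and for the endpoint $q=\infty$ (respectively $p=1$) one uses a trivial $\ell^1\to\ell^\infty$ bound on each spectral projection.

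Next I would invoke the transfer lemma established earlier in the section, which gives
\[
\|R^{1/2}(E+i\eta)\|_{2\to q} \ldom \|R_0^{1/2}(E+i\eta)\|_{2\to q}
\]
for $\eta \gtrsim \lambda^2$ and any $q\in[2,\infty]$, with the dual bound $\|R^{1/2}\|_{p\to 2}\ldom\|R_0^{1/2}\|_{p\to 2}$ for $p\in[1,2]$ following by taking adjoints. Composing the two halves yields
\[
\|R(E+i\eta)\|_{p\to q} \leq \|R^{1/2}\|_{p\to 2}\|R^{1/2}\|_{2\to q} \ldom 1
\]
for $1\leq p\leq \tfrac65$, $6\leq q\leq\infty$, and $\eta\gtrsim\lambda^2$.

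Finally, for $\eta \ll \lambda^2$ I would apply the factorization~\eqref{eq:small-eta-R} with $z_0 = E+i\lambda^2$ and $z = E+i\eta$. Since $R^{-1/2}(z_0) R(z) R^{-1/2}(z_0)$ is a function of $H$, a spectral calculus estimate bounds its $\ell^2\to\ell^2$ norm by $\lambda^2\eta^{-1}$ (worst-case contribution near $E$). Sandwiching this between the two $\|R^{1/2}(z_0)\|$ factors and using the bounds from the previous step, I obtain
\[
\|R(z)\|_{p\to q} \leq \|R^{1/2}(z_0)\|_{p\to 2}\cdot \lambda^2\eta^{-1}\cdot \|R^{1/2}(z_0)\|_{2\to q} \ldom \lambda^2\eta^{-1},
\]
which combined with the $\eta\gtrsim\lambda^2$ bound yields $\lambda^2\eta^{-1}+1$ uniformly in $\eta>0$.

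The main obstacle I expect is the sum over dyadic scales in the first step: one must track the logarithmic loss from the Tomas–Stein estimate against the cutoff coming from the fact that the spectrum of $\Delta$ is bounded, and verify that no critical-value issues arise in the relevant windows. The transfer step itself is essentially handed to us by the earlier lemma, and the small-$\eta$ argument reduces to a trivial $\ell^2\to\ell^2$ estimate once the factorization is in place.
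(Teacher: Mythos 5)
Your proposal takes essentially the same route as the paper: dyadic spectral decomposition of $R_0^{1/2}$ around the energy $E$, with Tomas–Stein on each annulus, plugged into the transfer lemma to compare $H$ with $\Delta$, and then the factorization~\eqref{eq:small-eta-R} to reach $\eta\ll\lambda^2$. The minor loose ends you yourself flag (the logarithmic sum over scales, which the $\ldom$ notation absorbs, and the critical-value hypothesis on $E$, which the theorem implicitly carries from Theorem~\ref{thm:resolvent-apriori}) are the same ones the paper elides; the endpoint extension to $q=\infty$, $p=1$ also follows immediately from the $\ell^p$ embeddings on the discrete lattice rather than requiring a separate projection bound. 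Correct, and materially the same argument.
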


\begin{proof}
By a duality argument it suffices to prove that
\[
\|\chi(\Delta)^2\|_{\frac65\to 6} \lsim \eps\log\eps^{-1}
\]
We drop the square, just relabeling $\chi\mapsto\chi^2$.  We use a decomposition of $\chi$ as follows:
\[
\chi = \sum_k \phi_k,
\]
where $\phi_k$ is smooth to scale $2^{-k}$ and supported in an interval of width $2^{-k}$ around $E_0$.  There are
about $\log\eps^{-1}$ terms in this decomposition, going from unit scale to scale $\eps$.   We have
$\|\phi_k\|_\infty \lsim \eps 2^{-k}$, so that
\[
\|\phi_k(\Delta)\|_{2\to 2} \lsim 2^{k} \eps.
\]
To prove that $\|\phi_k(\Delta)\|_{\frac65\to 6}\lsim 1$ it therefore suffices by interpolation to prove that
\[
\|\phi_k(\Delta)\|_{1\to \infty} \lsim 2^{-\frac12 k} \eps.
\]
The operator $\phi_k(\Delta)$ is a convolution operator with kernel
\[
K_k(x) = \int_{\Torus^d} e^{i\xi x} \phi_k(\omega(\xi))\diff \xi.
\]
Since the function $\phi_k(\omega(\xi))$ oscillates (and is mean-zero) on scales $2^{-k}$, and is also smooth to scale $2^{-k}$,
we have that $|K_k(x)|$ is quite small unless $|x|\sim 2^k$.  For such $x$, one can decompose the integral above using the coarea
formula along the slices $X_E := \{\omega(\xi) = E\}$:
\[
K_k(x) = \int_{[E-2^{-k},E+2^{-k}]} \phi_k(E)
\Big(\int_{X_E} e^{i\xi x} \frac{1}{|\nabla\omega(\xi)|} \diff \mcal{H}^{d-1}(\xi)\Big) \diff E.
\]
We treat the term $|\nabla \omega|^{-1}$ as just a smooth weight because we assume that $E$ is away from critical values of $\omega$.
One can now decompose the surface $X_E$ into coordinate charts and apply a stationary phase estimate.
The fact that there is at least one direction of nonvanishing curvature at every point of $X_E$ implies that
\[
\Big|\int_{X_E} e^{i\xi x} \frac{1}{|\nabla\omega(\xi)|} \diff \mcal{H}^{d-1}(\xi)\Big|
\lsim |x|^{-1/2}.
\]
Therefore,
\[
|K_k(x)| \lsim 2^{-k/2}\eps
\]
as desired, and this completes the proof.
\end{proof}

As a consequence, we obtain that
\[
\|R_0^{1/2}(E+i\eta)\|_{2\to 6} \lsim \log\eta^{-1}.
\]

\subsubsection{$\ell^p$ bounds for eigenfunctions}
An interesting consequence of the resolvent bounds is a bound on the $\ell^p$ norm of eigenfunctions of $H$ on $\bbZ^d_L$.  Indeed,
let $\psi_k$ satisfy $H\psi_k=E_k\psi_k$ and $\|\psi_k\|_2=1$, so that it is an $\ell^2$-normalized eigenfunction of $H$ with energy $E_k$.  Then
\[
\psi_k
= \lambda R^{1/2}(E_k+i\lambda^2) \psi_k
\]
so that
\[
\|\psi_k\|_{\ell^6} \leq \lambda \|R^{1/2}(E_k+i\lambda^2)\|_{2\to 6} \lsim \lambda.
\]
This implies that on $\bbZ^2$ most eigenfunctions are of order $\lambda$ in $\ell^6$ norm.    In higher dimensions one can
obtain bounds for $\ell^p$ with $p_d>2$ but $p_d\to 2$ as $d\to\infty$.

\newpage
\section{The Diffusive Time Scale}
In this chapter of the lecture notes, we analyze timescales $t\gg \lambda^{-2}$ on which the potential now has a strong effect.  Before we try to compute, it is instructive to understand what
one expects to be true.  For this purpose it is helpful to return to the setting of $\Real^d$.  For simplicity we consider a stationary Gaussian potential.  Such a potential is completely characterized by its two-point correlation function,
\[
K(x) := \Expec V(0)V(x).
\]
To observe diffusion in the random Schrodinger equation
\[
i\partial_t\psi = \Delta\psi + \lambda V \psi,
\]
one looks at the evolution of observables $\braket{\psi_t, \Op(a_0)\psi_t}$, where $a_0\in C^\infty(\Real^d_x\times\Real^d_\xi)$ is a smooth function on phase space and $\Op(a_0)\in\mcal{B}(L^2(\Real^d))$
is the quantization of it (for example, using the Weyl quantization).  One can derive heuristically (see for example~\cite{Spohn}) the following effective kinetic equation for the phase space density $a_t(x,p)$:
\[
\partial_t a_t(x,\xi) + \xi\cdot\nabla_x a_t(x,\xi) = \lambda^2\int_{\Real^d} \delta(|\xi|^2/2-|\xi'|^2/2) \Ft{K}(\xi-\xi') [ a(x,\xi')-a(x,\xi)]\diff \xi'.
\]
The term on the left hand side describes free transport by particles along lines (solving $\dot{x}=\xi$ and $\dot{\xi}=0$).  The right hand side is a collision term that scatters particles at
momentum $\xi'$ to momentum $\xi$ (and vice-versa).  The $\lambda^2$ gives the effective strength of the potential (corresponding to the fact that, up to time $\lambda^{-2}$, the potential
is not noticed), the $\delta(|\xi|^2/2-|\xi'|^2/2)$ term enforces conservation of kinetic energy, and $\Ft{K}(\xi-\xi')$ is the scattering kernel corresponding to the potential $V$.
This limit was established first in the spatially homogeneous case and for small kinetic times by Spohn~\cite{Spohn}, then for the full linear Boltzmann equation and at arbitrary kinetic times
in~\cite{EYkinetic}, and finally for diffusive time scales in~\cite{ESYRSE}.

Note that
for $V$ smooth, $\Ft{K}$ is localized so that each scattering event can only deflect a particle by a small angle.
In contrast, on $\bbZ^d$ one has $K(x) = \delta_0(x)$ where $\delta_0$ is a Kronecker delta at the origin so that $\Ft{K}(\xi)=1$.  Moreover, $|\xi|^2/2$ is replacd by the dispersion relation $\omega(\xi)$.
Therefore, after a scattering event the momentum should be uniformly distributed on a kinetic energy shell $\{\omega(\xi)=E\}$ (weighted by $|\nabla\omega|^{-1}$ to account for the small thickness of the shell).   Thus the position of the particle at time $T$ is (classically) described by a random walk of $\sim \lambda^2T$ steps of size $\lambda^{-2}$.  Strictly speaking the step distribution
depends on the energy shell $E$, so that $p_t(x) = |e^{itH}_{0x}|^2$ should resemble not a single Gaussian but a mixture of Gaussian distributions with different variances, all on the order $\lambda^{-2}t$.  Nevertheless, it is not a bad heuristic to imagine that
\begin{equation}
\label{eq:gaussian-approximation}
|e^{itH}_{0x}|^2 \approx (\lambda^{-2}t)^{-d/2} \exp(-\lambda^2 |x|^2/t).
\end{equation}

For reasons that we will see later, it is more convenient to work with the resolvent $R(z) = (H-z)^{-1}$ than with the propagator $e^{-itH}$.  We write $z=E+i\eta$, where $E$ selects the energy and
$\eta\to 0$ is a small parameter.  As a function of $E$, $f(x) = \frac{1}{x - E + i\eta}$ is smooth to scale $\eta$, so by Corollary~\ref{cor:projection-kinetic} we have $R(E+i\eta)\approx R_0(E+i\eta)$
for $\eta \gg \lambda^2$.  Indeed, $R(z)$ has a nice explicit expression in terms of the propagator:
\begin{equation}
\label{eq:prop-to-R}
R(z) = \int_0^\infty e^{iEt} e^{-\eta t} e^{itH} \diff t.
\end{equation}
A heuristic for the entries of $|R(z)_{0x}|^2$ is that the entries of $e^{itH}_{0x}$ are completely decorrelated.  Combined with~\eqref{eq:gaussian-approximation} we arrive at the guess
\begin{align*}
|R(z)_{0x}|^2
&\approx \int_0^\infty e^{-2\eta t} |e^{itH}_{0x}|^2\diff t  \\
&\approx \int_0^\infty e^{-2\eta t} (\lambda^{-2}t)^{-d/2} \exp(-\lambda^2|x|^2/t)\diff t.
\end{align*}
Taking the limit $\eta\to 0$ we obtain, for $d\geq 3$, the heuristic
\begin{equation}
\label{eq:greens-approx}
|R(z)_{0x}|^2 \approx \lambda^2|x|^{2-d}.
\end{equation}
Although~\eqref{eq:greens-approx} and~\eqref{eq:gaussian-approximation} are heuristically related, it is not clear that either implies the other.  Nevertheless, we do have by applying
Parseval's theorem to the identity~\eqref{eq:prop-to-R}
\[
\eta \int_0^\infty e^{-\eta t} |e^{itH}_{0x}|^2 \diff t
= \int_{-\infty}^\infty |R_{0x}(E+i\eta)|^2 \diff E.
\]
Thus by summing over an annulus at the diffusive scale $\lambda^{-1}\eta^{-1/2}$ we can deduce \textit{time-averaged} information about $r(t)$ from the distribution of the mass in the rows of
the resolvent.

In particular, Theorem~\ref{thm:main-thm} is a consequence of the following estimate.
\begin{theorem}
\label{thm:resolvent-deloc}
For any $\eps,\delta>0$ there exists $c,C$ such that for $E\in[-2d,2d]$ with $d(E,\Sigma_d)>\eps$
and $\eta > \lambda^{2.1-\delta}$,
\[
\sum_{|x|>c\lambda\eta^{-1/2}} |R_{0x}(E+i\eta)|^2 \geq c \eta^{-1}
\]
holds with probability at least $1-C\lambda^{100}$.
\end{theorem}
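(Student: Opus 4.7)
The starting point is the spectral-theorem identity
\[
\sum_{x\in\bbZ^d_L}|R_{0x}(E+i\eta)|^2 = (R^*R)_{00} = \frac{1}{\eta}\Impt R_{00}(E+i\eta),
\]
so the conclusion splits into two sub-claims: (i) the total mass satisfies $\Impt R_{00}(E+i\eta)\gtrsim 1$ with high probability, and (ii) the mass inside the ball $B(0,r_*)$ with $r_*=c\lambda\eta^{-1/2}$ accounts for only a small fraction of the total. Subtraction then yields the theorem, with the probability estimate inherited via a union bound from the two probabilistic inputs.

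Claim (ii) is a direct application of H{\"o}lder on the lattice together with Theorem~\ref{thm:resolvent-apriori}. Writing $\ket{0}$ for the Kronecker delta at the origin,
\[
\sum_{|x|\leq r_*}|R_{0x}|^2 = \|\One_{B(0,r_*)} R\ket{0}\|_{\ell^2}^2 \leq |B(0,r_*)|^{2/3}\|R\ket{0}\|_{\ell^6}^2 \ldom r_*^{2d/3}\lambda^4\eta^{-2},
\]
where I used $\|R\ket{0}\|_{\ell^6}\leq \|R\|_{\ell^{6/5}\to\ell^6}\ldom \lambda^2\eta^{-1}$ in the diffusive regime $\eta\ll\lambda^2$. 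Plugging in $r_*=c\lambda\eta^{-1/2}$, the ratio of this bound to the target $\eta^{-1}$ is $\ldom c^{2d/3}\lambda^{[(\delta-0.1)d+12]/3}$, so the hypothesis $\eta>\lambda^{2.1-\delta}$ together with a sufficiently small choice of $c$ makes (ii) much smaller than the quantity estimated from below in (i). (For very large $d$ one would need the sharper Tomas-Stein exponent $2(d+1)/(d-1)$ in Theorem~\ref{thm:resolvent-apriori}, but the strategy is the same.)

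Claim (i) is the main obstacle: it is essentially a local density-of-states estimate at scales $\eta$ that can be smaller than $\lambda^2$, which is the smallest scale where Corollary~\ref{cor:projection-kinetic} gives useful information directly. That corollary yields only the coarse estimate $\mu_0([E-\delta,E+\delta])\sim\delta\rho_\Delta(E)$ at scales $\delta\gg\lambda^2$ (where $\mu_0$ denotes the spectral measure of $H$ at the origin), and the naive Poisson lower bound $\Impt R_{00}(E+i\eta)\geq \eta\delta^{-2}\mu_0([E-\delta,E+\delta])$ is too lossy when $\eta\ll\delta$. To descend to $\eta\sim\lambda^{2.1}$, I would use the Schur-complement identity at the origin,
\[
R_{00}(z) = \big(\lambda V(0) - z - h(z)\big)^{-1}, \qquad h(z) := H_{0*}(H_{**}-z)^{-1}H_{*0},
\]
together with $\Impt h(z)\geq 0$, which gives $\Impt R_{00}(E+i\eta)\geq (\eta+\Impt h)/|\lambda V(0)-z-h|^2$. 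A self-consistent equation / bootstrap argument, taking the coarse estimate on $\mu_0$ at the larger scale $\lambda^{2-\alpha}$ as its starting input, should show that $h(z)$ concentrates around a deterministic Stieltjes transform $m(z)$ with $\Impt m(E+i0)\sim\rho_\Delta(E)>0$, and hence $\Impt R_{00}(E+i\eta)\gtrsim 1$ with high probability. Combining (i) and (ii) by subtraction completes the proof.
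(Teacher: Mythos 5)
Your decomposition into the Ward identity plus two sub-claims correctly identifies the structure, and the two halves deserve different verdicts.

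\textbf{Claim (ii).} Your H{\"o}lder bound
\[
\sum_{|x|\le r_*}|R_{0x}|^2 \le |B(0,r_*)|^{2/3}\,\|R\ket{0}\|_{\ell^6}^2 \ldom r_*^{2d/3}\lambda^4\eta^{-2}
\]
is valid and, for the theorem \emph{as stated} with $r_* = c\lambda\eta^{-1/2}$, it genuinely suffices: the exponent of $\lambda$ in the ratio to $\eta^{-1}$ works out to $(\delta-0.1)\tfrac{d}{3}+1.9+\delta$ (you have a small arithmetic slip in your formula, but the sign is correct), which is positive for the moderate dimensions of interest. This is a real simplification over the paper, which reaches the same conclusion by developing the full $T$-equation/Bethe-Salpeter machinery (Proposition~\ref{prp:RFR} together with the random-walk anticoncentration Lemma~\ref{lem:anticonc}). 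That machinery is deployed because it proves a substantially stronger statement--control of the mass in balls of radius up to nearly the diffusive length $\lambda^{-1}\eta^{-1/2}$--whereas the theorem only asks about a much smaller radius $\lambda\eta^{-1/2}$, which your soft a-priori argument already handles. (One caveat: your parenthetical invoking the Tomas-Stein exponent $2(d+1)/(d-1)$ for large $d$ is not actually available here, since the lattice level sets $\{\omega(\xi)=E\}$ only have one direction of nonvanishing curvature; the exponent $q=6$ in Theorem~\ref{thm:resolvent-apriori} is what the geometry permits. This is a peripheral point.)

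\textbf{Claim (i).} This is where the gap lies, and you correctly flag it as the main obstacle--but your proposed route does not close it. The statement $\Impt R_{00}(E+i\eta)\gtrsim 1$ for $\eta$ down to $\lambda^{2.1-\delta}$ is precisely the content of the local law, Theorem~\ref{thm:local-law}, whose proof occupies most of Section 3.3. You propose to obtain it from the Schur-complement identity at the origin, i.e.\ the cavity method, but the paper explicitly points out that this strategy ``uses strongly the symmetries of the model and is not well suited for working with the Anderson model (where deleting one site of $\bbZ^d$ results in a different graph that no longer has translation symmetry).'' Concretely: the self-energy $h(z)=H_{0*}(H_{**}-z)^{-1}H_{*0}$ is built from resolvent entries of the one-site-deleted Hamiltonian $H_{**}$, so concentrating $h$ around a deterministic value requires a local law \emph{for $H_{**}$}, which is neither the Anderson model on $\bbZ^d_L$ nor related to it by a symmetry--the very issue the paper avoids by using the resolvent identity plus Gaussian integration by parts directly on $H$. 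Your phrase ``should show that $h(z)$ concentrates around a deterministic Stieltjes transform'' is doing all the work, and that work is not done; it is exactly the hard step. The paper's route to (i) is: (a) prove $\ell^p\to\ell^q$ a-priori resolvent bounds from the kinetic-time propagator estimate (Theorem~\ref{thm:resolvent-apriori}); (b) use these with the Gaussian Poincar\'e inequality to get fluctuation bounds on $R_{xx}$ (Lemma~\ref{lem:Rconc}); (c) close the self-consistent equation for $\tilde\theta=\Expec R_{00}$ (Lemma~\ref{lem:bootstrap}) and bootstrap in $\eta$ starting from the kinetic-scale input of Corollary~\ref{cor:projection-kinetic}. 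If you replace your cavity-method sketch with an appeal to Theorem~\ref{thm:local-law}, then your overall proof of the theorem is complete and in fact shorter than the paper's.
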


\subsection{The direct perturbative approach}
As a first try, let's see how one might go about understanding the resolvent directly from perturbation
theory.  To this end we use the resolvent identity,
\[
(H_0+W-z)^{-1} = (H_0-z)^{-1} - (H_0-z)^{-1} W (H_0+W-z)^{-1}.
\]
This can be checked by multiplying both sides on the right by $(H_0+W-z)$.   Applying this identity
with $H_0 = \Delta$ and $W=\lambda V$ we arrive at the identity
\[
(H - z)^{-1} = (\Delta-z)^{-1} - (\Delta-z)^{-1}(\lambda V) (H-z)^{-1}.
\]
Writing $R(z) = (H-z)^{-1}$ and $R_0(z)=(\Delta-z)^{-1}$ and iterating this formula, we arrive
at the \textit{Born series} expansion for $R(z)$,
\[
R(z) = R_0(z) + \sum_{j=1}^\infty (-1)^j R_0(z) (\lambda V R_0(z))^j.
\]
This is algebraically much simpler than the Dyson series expansion we used in the previous
chapter, and this is the main reason to use $R(z)$ instead of the propagator $e^{itH}$.

Note that the $j$-th term in the expansion above can be rearranged to
\[
R_0^{1/2} (\lambda R_0^{1/2} V R_0^{1/2})^j R_0^{1/2}.
\]
The term in brackets is a random matrix that is linear in the randomness, and is in a form that
is amenable to applying the non-commutative Khintchine inequality.  The naive bound for the operator,
using $\|R_0^{1/2}\|_{\ell^2\to\ell^2} = \eta^{-1/2}$ (remember that $z=E+i\eta$, $\eta>0$), is
$\|R_0^{1/2}VR_0^{1/2}\| \lsim \eta^{-1}$.  However, by applying the non-commutative Khintchine inequality, one can obtain a square-root cancellation for this operator, and instead get
\[
\|R_0^{1/2}VR_0^{1/2}\|_{\ell^2\to\ell^2}\lsim \eta^{-1/2}.
\]
Therefore, for $\eta \gg \lambda^2$, the term $\lambda R_0^{1/2} V R_0^{1/2}$ is of lower order than
the main term $R_0(z)$.

Taking $\eta$ into the diffusive regime $\eta \ll\lambda^{2}$ requires \textit{renormalization}.
In this context, renormalization is simply the fact that one is not forced to take $H_0=\Delta$,
but could instead take $H_0=\Delta + \Theta$ for any operator $\Theta$ we choose.  It turns out
that for the Anderson model, we can take $\Theta = \lambda^2 \theta\Id$ for a scalar $\theta$ (we will
see how one might renormalize more general models soon).  The scaling $\lambda^2$ is for convenience
(it will turn out that $\theta$ is an $O(1)$-sized quantity).

To explain the following calculation it is useful to introduce a diagrammatic notation.  In this notation, the
resolvent is represented by a solid line,
\[
R = \ER[0.6],
\]
and we define $M=(\Delta - (z+\lambda^2\theta))^{-1}$, represented by a dashed line,
$M=$\begin{tikzpicture} \draw [semithick, densely dashed] (0,0.1) -- (1,0.1); \draw[white] (0,0)--(1,0); \end{tikzpicture}.
With this choice of renormalization we have the Born series expansion
\begin{equation}
\label{eq:R-expansion}
R(z) = M + \sum_{j=1}^\infty (-1)^j M
((\lambda V - \lambda^2\theta)M)^j.
\end{equation}
Diagrammatically, this is represented by
\[
\ER = \snake + \snakeO + \snakeT + \cdots,
\]
where we use the circled cross to represent $\lambda V - \lambda^2\theta$ (the cross represents $\lambda V$ and the circle $\lambda^2\theta$).
Upon taking an expectation the term with a single cross vanishes and the two collisions in the third term
are paired by the Wick rule, so the first few terms can be written as
\[
\Expec
\ER = \snake +
\begin{tikzpicture}
\draw [semithick, densely dashed] (0,0) -- (2,0);
\node [circle, draw=black, fill=white, inner sep=2pt, line width=0.7pt] at (1,0) {};
\end{tikzpicture}
+
\begin{tikzpicture}
\draw [semithick, densely dashed] (0,0) -- (2,0);
\node [cross,fill=black,inner sep=1.5pt, line width=1pt] (X) at (0.6,0) {};
\node [cross,fill=black,inner sep=1.5pt, line width=1pt] (Y) at (1.4,0) {};
\arcto{(X)}{(Y)}{semithick}
\end{tikzpicture}
+ \cdots.
\]

The latter two terms can be made to cancel if we choose $\theta$ to solve the self-consistent equation
\begin{equation}
\label{eq:SCE-diagram}
\SCE[0.6]  +
\begin{tikzpicture}[scale=0.6]
\coordinate (Q) at (0,0);
\node [circle, draw=black, fill=white, inner sep=2pt, line width=0.7pt] at (Q) {};
\end{tikzpicture} = 0,
\end{equation}
which can also be written as
\[
\lambda^2 \Expec VRV =
\lambda^2 \theta \,\Id,
\]
and taking a diagonal entry this produces the following defining equation for $\theta$:
\[
\theta = M_{00} = (\Delta - (z+\lambda^2\theta))^{-1}_{00}.
\]
Returning to the diagrammatic interpretation of the self-consistent equation, we may use it to
simplify away self-loops in more complicated terms coming from the Wick expansion of higher order terms
in the Born series.  For example, using~\eqref{eq:SCE-diagram} we have the identity
\[
\begin{tikzpicture}[scale=0.6]
\coordinate (A) at (0,0);
\coordinate (Q) at (1,0);
\coordinate (R) at (2,0);
\coordinate (S) at (3,0);
\coordinate (T) at (4,0);
\coordinate (U) at (5,0);
\coordinate (V) at (6,0);
\coordinate (B) at (7,0);
\draw [semithick, densely dashed] (A) -- (B);
\node [cross, fill=black, inner sep=1.5pt, line width=1pt] at (Q) {};
\node [cross, fill=black, inner sep=1.5pt, line width=1pt] at (R) {};
\node [cross, fill=black, inner sep=1.5pt, line width=1pt] at (S) {};
\node [cross, fill=black, inner sep=1.5pt, line width=1pt] at (T) {};
\node [cross, fill=black, inner sep=1.5pt, line width=1pt] at (U) {};
\node [cross, fill=black, inner sep=1.5pt, line width=1pt] at (V) {};
\arcto{(Q)}{(T)}{semithick}
\arcto{(R)}{(S)}{semithick}
\arcto{(U)}{(V)}{semithick}
\end{tikzpicture}
+
\begin{tikzpicture}[scale=0.6]
\coordinate (A) at (0,0);
\coordinate (Q) at (1,0);
\coordinate (X) at (2.5,0);
\coordinate (T) at (4,0);
\coordinate (U) at (5,0);
\coordinate (V) at (6,0);
\coordinate (B) at (7,0);
\draw [semithick, densely dashed] (A) -- (B);
\node [cross, fill=black, inner sep=1.5pt, line width=1pt] at (Q) {};
\node [circle, draw=black, fill=white, inner sep=2pt, line width=0.7pt] at (X) {};
\node [cross, fill=black, inner sep=1.5pt, line width=1pt] at (T) {};
\node [cross, fill=black, inner sep=1.5pt, line width=1pt] at (U) {};
\node [cross, fill=black, inner sep=1.5pt, line width=1pt] at (V) {};
\arcto{(Q)}{(T)}{semithick}
\arcto{(U)}{(V)}{semithick}
\end{tikzpicture}
= 0.
\]
The terms that do \textit{not} cancel involve crossings.  There are \textit{many} crossing terms,
 the simplest of which is given by
\[
\begin{tikzpicture}[scale=0.6]
\coordinate (A) at (0,0);
\coordinate (Q) at (1,0);
\coordinate (R) at (2,0);
\coordinate (S) at (3,0);
\coordinate (T) at (4,0);
\coordinate (B) at (5,0);
\draw [semithick, densely dashed] (A) -- (B);
\node [cross, fill=black, inner sep=1.5pt, line width=1pt] at (Q) {};
\node [cross, fill=black, inner sep=1.5pt, line width=1pt] at (R) {};
\node [cross, fill=black, inner sep=1.5pt, line width=1pt] at (S) {};
\node [cross, fill=black, inner sep=1.5pt, line width=1pt] at (T) {}; 
\arcto{(Q)}{(S)}{semithick}
\arcto{(R)}{(T)}{semithick}
\end{tikzpicture}
= \lambda^4\sum_{x,y} M \ket{x}M_{xy}^2 M_{yx} \bra{y}M
\]
A ``naive'' estimate would indicate that this term has size $\lambda^4 \eta^{-2}$, but in fact the
crossing induces an additional cancellation such that the term actually has size $\lambda^4\eta^{-1}$
(although this is delicate).  In~\cite{ESYAnderson,ESYrecollision} used
combinatorial arguments and estimates from harmonic analysis to show that the sum of all crossing terms has size
on the order $\lambda^{2+\eps} \eta^{-1}$.  In fact, they handled second moments of the terms in
the \textit{Dyson} series which is even more complicated, but their methods should of course apply
to the simpler problem of computing the expectation of the resolvent.

\subsection{The resolvent of a Wigner matrix}
We are not going to try to directly estimate the crossing terms.
Instead we take inspiration from random matrix theory, where crossing terms also appear but elegant methods have been developed to handle them.
In particular, we use the framework of \textit{self-consistent equations} which was developed to
understand the local spacing of eigenvalues in Wigner matrices~\cite{erdos2009semicircle}.  An excellent exposition of this approach is provided in~\cite{erdos2013local}.  We will present a suboptimal result which nevertheless suffices
for our purposes.

The simplest example of a random matrix is the GOE ensemble, having independent (symmetric) Gaussian entries of variance $1$ above the diagonal and variance $2$ on the diagonal.  We can express such matrices as
\begin{equation}
\label{eq:HGOE}
H_{\rm GOE} = \frac{1}{\sqrt{N}}\sum_{1\leq i\leq j\leq N} g_{ij} E_{ij},
\end{equation}
where
\[
E_{ij} =
\begin{cases}
\sqrt{2} e_i\otimes e_i, & i=j,\\
e_i\otimes e_j + e_j\otimes e_i, & i\not= j.
\end{cases}
\]
For convenience we use the bra-ket notation $\ket{i}\bra{j}$ to mean $e_i\otimes e_j$.

The distribution of $H_{\rm GOE}$ is invariant under conjugation by a rotation matrix, so the eigenvectors are
uniformly distributed from the unit sphere.  What is more interesting is to compute the distribution of
eigenvalues of $H_{\rm GOE}$.  Let $\lambda_k$ be the set of eigenvalues and $\psi_k$ the eigenvectors, so that  $H_{\rm GOE}\psi_k = \lambda_k \psi_k$.  We define the empirical measure of eigenvalues $\mu$ by
\[
\mu = \sum_k \delta_{\lambda_k}.
\]
Then we can write
\[
R_{\rm GOE}(z) = (H_{\rm GOE}-z)^{-1} = \sum_k \frac{1}{\lambda_k-z} \psi_k \otimes\psi_k.
\]
Taking a trace, we have
\[
\trace[ R_{\rm GOE}(z) ] = \sum_k \frac{1}{\lambda_k-z} = \int \frac{1}{t-z} \diff \mu(t).
\]
Writing $z=E+i\eta$ with $\eta>0$ and taking the imaginary part, we have
\[
-\Impt \trace[R_{\rm GOE}(z)] = \int \frac{\eta}{(E-t)^2 + \eta^2} \diff \mu(t).
\]
The right hand side is a convolution of $\mu$ with the Poisson kernel, which
(up to a scaling by $\pi$), is an approximate identity.  More precisely, $R_{\rm GOE}(E+i\eta)$ encodes (a
smoothed version of) the eigenvalue count in the interval $[E-\eta,E+\eta]$.  It is therefore of interest
to compute the diagonal elements of $R_{\rm GOE}(E+i\eta)$ with $\eta$ as small as possible.

An efficient way to compute $R_{\rm GOE}$ is to use the ``cavity method''.  That is, one
introduces a ``cavity'' by deleting one site from $[N]$.  This corresponds to deleting a row and a column
of the matrix, resulting in a sample from the $(N-1)\times (N-1)$ GOE ensemble.  The Schur complement formula can be used to relate the resolvent from the smaller ensemble to the resolvent of the full ensemble.  Moreover, these resolvents can be shown to be closely related (for example, one has the eigenvalue interlacing property).  This strategy uses strongly the symmetries of the model and is not well suited for working with the Anderson model (where deleting one site of $\bbZ^d$ results in a different graph that no longer has translation symmetry).

Fortunately there is an alternative approach that uses only (1) the resolvent identity and (2) Gaussian integration by parts.   First we use the resolvent identity using the renormalization $H-z = -(m(z)\Id+z) + (H+m(z)\Id)$ (we drop the $\Id$ and the dependence on $z$ below):
\begin{equation}
\label{eq:goe-resolvent-id}
R = -(m+z)^{-1} + (m+z)^{-1} (H+m) R.
\end{equation}
At this point one \textit{could} iterate this formula and obtain an expansion for $R_{\rm GOE}$,
resulting in an expression analogous to~\eqref{eq:R-expansion}.  However there is an alternative approach
using the Gaussian integration by parts formula $\Expec Z_i f(Z) = \Expec \partial_i f(Z)$.  Indeed,
we can use the definition of $H_{\rm GOE}$~\eqref{eq:HGOE} and the resolvent identity to compute
\[
\frac{\partial}{\partial g_{ij}} R = -\frac{1}{\sqrt{N}} R E_{ij} R.
\]
Thus, taking an expectation on both sides of~\eqref{eq:goe-resolvent-id} we obtain
\begin{align*}
\Expec R
&= -(m+z)^{-1} + (m+z)^{-1} (\Expec HR + \Expec mR) \\
&= -(m+z)^{-1} + (m+z)^{-1} (-N^{-1}\sum_{i\leq j} \Expec E_{ij}RE_{ij}R + \Expec mR).
\end{align*}
To make sense of this we define the superoperator $\mcal{A}_{\rm GOE}$
\begin{align*}
\mcal{A}_{\rm GOE}[B] &:= N^{-1} \sum_{i\leq j} E_{ij} B E_{ij} \\
&= N^{-1} \sum_{i} 2 B_{ii} e_i\otimes e_i
+ N^{-1} \sum_{i\not=j} B_{ii} e_j\otimes e_j + B_{ij} e_j\otimes e_i \\
&= N^{-1} B^\intercal + N^{-1}\trace[B] \Id.
\end{align*}

Therefore we have
\begin{equation}
\label{eq:goe-loc-appx}
\Expec R = -(m+z)^{-1}(\Id + N^{-1} \Expec R^\intercal R + \Expec (N^{-1}\trace[R]-m) R).
\end{equation}
Using that $\|R\|_{op}\leq \eta^{-1}$, we have that $N^{-1}\Expec R^\intercal R$ is bounded by
$N^{-1}\eta^{-2}$ in operator norm (which is small so long as we take $\eta \gg N^{-1/2}$).  We
can make the final term small if we take $m = \Expec N^{-1}\trace R = \Expec R_{00}$ and if we can
show that $N^{-1}\trace R$ is concentrated around its mean.    We will see that this is the case
in a moment, but first let us complete the calculation.  Taking the diagonal entries of
\eqref{eq:goe-loc-appx} above and setting $m = \Expec R_{00}$, we have that $m$ solves
\[
m = -(m+z)^{-1} (1 + \eps)
\]
where
\[
\eps := N^{-1}\Expec (R^\intercal R)_{00} + \Expec (N^{-1}\trace[R-\Expec R])R_{00}.
\]
Using the quadratic formula, the solution $m$ is given by
\[
m = \frac{-z \pm \sqrt{z^2 - 4(1+\eps)}}{2}.
\]
Now if we set $z=E+i\eta$ for small $\eta\ll 1$, we obtain (neglecting $\eta$ and $\eps$)
\[
\Impt m \approx \Impt \sqrt{E^2/4 - 1} = \begin{cases} \sqrt{1 - (E/2)^2}, & |E|\leq 2 \\ 0, & |E|>2.\end{cases}
\]
This is the semicircular law.

To complete the proof we need to show that $N^{-1}\trace[R]$ is concentrated around its mean.  To simplify the calculation we will actually show that $R_{00}$ is concentrated, and this is done
by computing the Lipschitz constant of $R_{00}$ as a function of $g_{ij}$.  We compute:
\begin{align*}
|\nabla R_{00}|^2
&= \sum_{i\leq j} \Big|\frac{\partial}{\partial g_{ij}} R_{00}\Big|^2 \\
&= N^{-1}\sum_{i\leq j} |(R E_{ij} R)_{00}|^2 \\
&\lsim N^{-1}\sum_{i,j} |R_{0i}|^2 |R_{0j}|^2 \\
&= N^{-1} \Big(\sum_i |R_{0,i}|^2\Big)^2 \\
&= N^{-1} \eta^{-2} (\Impt R_{00})^2.
\end{align*}
Deterministically we have $\Impt R_{00} \leq \eta^{-1}$, so we have
\[
|\nabla R_{00}|^2 \lsim \eta^{-4}N^{-1}.
\]
Therefore,
\[
\Prob( |R_{00} - \Expec R_{00}| \geq K \eta^{-2}N^{-1/2}) \leq \exp(-cK^2).
\]
We have therefore proved the following:
\begin{proposition}
For $H_{\rm GOE}$ as in~\eqref{eq:HGOE} and $z=E+i\eta$, with $E\in(-2,2)$ we have that
\[
\Prob(|\Impt \trace R(z) - \sqrt{1-(E/2)^2}| \geq K \eta^{-2} N^{-1/2}) \leq \exp(-cK^2).
\]
\end{proposition}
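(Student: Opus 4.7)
The plan is to combine the two ingredients already set up---the scalar self-consistent equation derived via integration by parts, and the Lipschitz-plus-Gaussian-concentration of diagonal resolvent entries---to pin $\Expec R_{00}(z)$ to (approximately) the Stieltjes transform of the semicircle law, and then to transfer this via concentration to $R_{00}$ or, by the identical argument, to $N^{-1}\trace[R]$. (I read the proposition as a statement about $N^{-1}\Impt\trace[R]$, since the target $\sqrt{1-(E/2)^2}$ is an $O(1)$ density rather than an $O(N)$ quantity.)

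First I would set $m := \Expec R_{00}(z)$ and take the $(0,0)$-entry of~\eqref{eq:goe-loc-appx}, obtaining the scalar identity $m = -(m+z)^{-1}(1+\eps)$ with
\[
\eps \;=\; N^{-1}\Expec (R^\intercal R)_{00} \;+\; \Expec\bigl[(N^{-1}\trace[R] - m)\,R_{00}\bigr].
\]
The first term is $\leq N^{-1}\eta^{-2}$ deterministically; the second is controlled by Cauchy--Schwarz in terms of $\mathrm{Var}(N^{-1}\trace[R])^{1/2}\cdot \mathrm{Var}(R_{00})^{1/2}$. The Lipschitz computation already displayed gives $|\nabla R_{00}|^2 \lsim \eta^{-4}N^{-1}$, hence $\mathrm{Var}(R_{00})\lsim \eta^{-4}N^{-1}$ by Gaussian concentration; the analogous computation applied to $N^{-1}\sum_i R_{ii}$ yields the same bound. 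Together these give $|\eps|\lsim \eta^{-4}N^{-1}$. The relation $m^2+zm+(1+\eps)=0$ is then a perturbation of the defining quadratic for the semicircle Stieltjes transform; choosing the branch with $\Impt m>0$ and Taylor expanding around $\eps=0$ gives $\Impt m = \sqrt{1-(E/2)^2} + O(\eta) + O(|\eps|)$ for $z=E+i\eta$ with $|E|<2$.

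Next, the same Lipschitz bound combined with standard Gaussian concentration gives $|R_{00} - m|\leq K\eta^{-2}N^{-1/2}$ (and the analogous bound for $N^{-1}\trace[R]$) with probability at least $1-\exp(-cK^2)$. Combining with the previous step and taking imaginary parts yields the stated bound, in the regime $\eta^{-4}N^{-1}\lsim \eta^{-2}N^{-1/2}$, i.e.\ $\eta \gtrsim N^{-1/4}$.

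The point requiring care is a potential circularity in the first step: bounding $\eps$ requires variance bounds on $R_{00}$ and $N^{-1}\trace[R]$, which look as though they should be consequences of already knowing $m$. The resolution is that the variance bounds come \emph{only} from the Lipschitz constant, which itself comes only from the deterministic estimate $\|R\|_{\mathrm{op}}\leq \eta^{-1}$---no knowledge of $m$ is required---so the argument can be ordered noncircularly. The main expected obstacle is merely quantitative: this simple approach only reaches $\eta\gtrsim N^{-1/4}$, and pushing down to the optimal $\eta\gtrsim N^{-1+\delta}$ requires a bootstrap across scales of $\eta$ (using improved concentration at a larger scale as input for a smaller scale). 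That refinement is not needed for the stated proposition.
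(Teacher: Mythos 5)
Your proposal follows essentially the same route as the paper's proof: derive the scalar self-consistent equation for $m := \Expec R_{00}$ via Gaussian integration by parts, control the error $\eps$ via the Lipschitz bound $|\nabla R_{00}|^2 \lsim \eta^{-4}N^{-1}$ together with Gaussian concentration, and solve the resulting perturbed quadratic to recover the semicircle density. You fill in a few details the paper glosses over --- the explicit Cauchy--Schwarz bound on the covariance term in $\eps$, the observation that concentration is needed for $N^{-1}\trace R$ and not only $R_{00}$, and the implicit $N^{-1}$ normalization of $\trace R$ in the statement --- but the conceptual steps are identical.
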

This kind of result is called a ``local law'', as it provides information about the local density of eigenvalues at
a microscopic scale.  The optimal local law provides information for imaginary parts $\eta >> N^{-1}$, and thus
nearly captures the location of individual eigenvalues.  The bound above only provides information down to scale $\eta \gg N^{-1/4}$.

\subsubsection{More general ensembles}
The nice thing about the derivation of the semicircular law that we just saw is that it is rather robust,
not relying on any particular symmetry of the GOE ensemble.  Let's go through the calculation again,
this time for a more general Gaussian random matrix of the form
\[
H = A_0 + \sum_j g_j A_j.
\]
In the case of Wigner random matrices we have $A_0=0$ and $A_{ij} = E_{ij}$, and in the case of
the Anderson tight-binding model we will take $A_0=\Delta_{\bbZ^d}$ and $A_j = \ket{j}\bra{j}$.  Other lattices can be accomodated for by changing $A_0$, and nontrivial covariance structure can be added to
the potential by changing $A_j$.

We renormalize by writing $H-z = (A_0 - (z+M)) + (\sum_j g_j A_j + M)$, and so by the resolvent
identity have
\[
R = (A_0 - (z+M))^{-1} - (A_0 - (z+M))^{-1} \Big(\sum_j g_j A_j + M\Big) R.
\]
Taking an expectation and performing Gaussian integration by parts we arrive at
\[
\Expec R = (A_0 - (z+M))^{-1} - (A_0 - (z+M))^{-1} (\Expec (M - \mcal{A}[R])R),
\]
where $\mcal{A}$ is the superoperator
\[
\mcal{A}[B] := \sum_j A_j B A_j.
\]
Setting $M = \mcal{A}[\Expec R]$  we arrive at the following approximate self-consistent equation for
$\Expec R$.
\begin{equation}
\label{eq:general-ER-eq}
\Expec R = (A_0 - (z+\mcal{A}[\Expec R]))^{-1} + (A_0 -(z+\mcal{A}[\Expec R]))^{-1}
(\Expec \mcal{A}[R-\Expec R]R).
\end{equation}
If $\mcal{A}[R]$ is sufficiently concentrated, we can treat the second term as an error term.  We then
arrive at an (approximate) self-consistent equation for $\Theta := \Expec \mcal{A}[R]$
\begin{equation}
\label{eq:general-theta-eq}
\Theta = \mcal{A}[(A_0 - (z+\Theta))^{-1}] + \mathfrak{E},
\end{equation}
where
\[
\mathfrak{E} = \mcal{A}[(A - (z+\Theta))^{-1} \Expec \mcal{A}[R-\Expec R]R].
\]

Before we specialize to the Anderson model, we stop to make an observation that applies generally.
The formal calculation we started with for the Anderson model suggests that $\mathfrak{E}$ should encode
the ``crossing'' terms, and it is not clear from the expression above what $\mathfrak{E}$ has to
to with crossings.  To see the connection, we will derive an alternative expression for
$\Expec \mcal{A}[R-\Expec R]R$.  Note first that, if $R'$ is an independent sample of the resolvent,
then
\[
\Expec \mcal{A}[R-\Expec R]R' = 0.
\]
To make use of this we interpolate between $R$ and $R'$ in the second factor.  Define
$R^q$ to be the resolvent of $H^q$, where
\[
H^q = A_0 + \sum_j (g_j \sqrt{1-q}+ g_j'\sqrt{q}) A_j,
\]
and $g_j'$ are independent samples of $g_j$.  Then $R = R^0$ and $R'=R^1$, so using the fundamental
theorem of calculus to interpolate as in~\cite{BBVH} we have
\begin{align*}
\Expec \mcal{A}[R-\Expec R]R
&= \Expec \mcal{A}[R^0-\Expec R^0]R^1
+ \int_0^1 \frac{d}{dq} \Expec \mcal{A}[R^q-\Expec R^q]R^1 \diff q \\
&= \int_0^1 \frac{d}{dq} \Expec \mcal{A}[R^q]R^1 \diff q.
\end{align*}
Then we compute, using the resolvent identity and another application of GIBP:
\begin{align*}
\frac{d}{dq} \Expec \mcal{A}[R^q]R^1 &=
\sum_j \frac{1}{2\sqrt{q}}\Expec g_j' \mcal{A}[R^qA_jR^q]R^1 -\frac{1}{2\sqrt{1-q}} \Expec g_j \mcal{A}[R^qA_jR^q]R^1 \\
&= \sum_j \Expec \mcal{A}[R^q A_j R^q] R^1 A_j R^1 \\
&= \sum_{j,k} \Expec A_k R^q A_j R^q A_k R^1 A_j R^1.
\end{align*}
The final expression above is a ``crossing'' term, because the pairing of the $A_j$ and $A_k$ are
crossed.  This explains that although $\mathfrak{E}$ encodes the contributions of all of the
crossing terms, it can be bounded without reference to any crossings at all.

\subsection{The local law for the Anderson model}
We are finally ready to start thinking about the Anderson model on $\bbZ^d$, defined by the
Hamiltonian
\[
H = \Delta + \lambda \sum_{j\in\bbZ^d} g_j \ket{j}\bra{j}.
\]
It is more convenient for us to work on the torus
$\bbZ^d_L := \bbZ^d / L\bbZ^d$, with $L = \lceil \lambda^{-100} \rceil$ (as in the previous chapter).

The superoperator $\mcal{A}$ corresponding to this matrix ensemble is $\lambda^2\mcal{D}$,
where the diagonal superoperator $\mcal{D}$ is defined by
\[
\mcal{D}[B] = \sum_i \ket{i}\braket{i|B|i}\bra{i} = \sum_i B_{ii} \ket{i}\bra{i}.
\]
Therefore, defining $\tilde{\theta}(z) := \Expec R_{00}(z)$, the general self-consistent equation~\eqref{eq:general-ER-eq} becomes
\[
\Expec R = (\Delta - (z+\lambda^2 \tilde{\theta}))^{-1}
+ \lambda^2 (\Delta - (z + \lambda^2\tilde{\theta}))^{-1} \Expec \mcal{D}[R-\Expec R]R.
\]
Taking the diagonal entry of both sides we arrive at the following approximate self-consistent
equation for $\tilde{\theta}$ (which is the analogue of~\eqref{eq:general-theta-eq}):
\begin{equation}
\label{eq:appx-theta}
\tilde{\theta} = (\Delta - (z+\lambda^2\tilde{\theta}))^{-1}_{00} + \lambda^2
((\Delta - (z+\lambda^2\tilde{\theta}))^{-1} \Expec \mcal{D}[R-\Expec R]R)_{00}.
\end{equation}
As a first step towards proving quantum diffusion we will show that, for $z=E+i\eta$ and $\eta \gg \lambda^{2+\frac16}$, $\tilde{\theta}$ is well approximated by the solution to
\begin{equation}
\label{eq:theta-sce}
\theta = (\Delta - (z+\lambda^2\theta))^{-1}_{00}.
\end{equation}

To state our result precisely it is convenient to introduce stochastic domination notation.  We say that $B$
stochastically dominates $A$, written $A\ldom B$, if for any $\eps,N>0$ there exists $C(\eps,N)$ such that
\[
A\leq C(\eps,N) \lambda^\eps B
\]
holds with probability at least $1-C(\eps,N)\lambda^N$.
\begin{theorem}
\label{thm:local-law}
Fix $\eps,\delta>0$, and
let $E\in[-2d,2d]$ with $d(E,\Sigma_d)>\eps$.
Then for $z=E+i\eta$ with $\eta \gg \lambda^{2+\frac16-\delta}$,
\[
|\Expec R_{00} - \theta| \leq \lambda^{1/2-\delta} (\lambda \eta^{-1})^3
\]
and moreover
\begin{equation}
\label{eq:Rxy-conc}
|R_{xy} - \Expec R_{xy}| \ldom \lambda^{1/2} (\lambda \eta^{-1})^2.
\end{equation}
\end{theorem}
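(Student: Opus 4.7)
My plan is to prove Theorem~\ref{thm:local-law} as a bootstrap in $\eta$: start at $\eta$ of order $1$, where both claims are trivial from $\|R\|_{op}\leq \eta^{-1}$, and descend continuously down to $\eta \gg \lambda^{2+\frac{1}{6}-\delta}$ using that $R(z)$ is Lipschitz in $\eta$ on scale $\eta^2$. At each scale, the a priori input is the $\ell^p\to\ell^q$ resolvent bound of Theorem~\ref{thm:resolvent-apriori2}, which for $\eta\gg\lambda^2$ gives $\|R\|_{p\to q}\ldom 1$ for $p\in[1,\frac{6}{5}]$ and $q\in[6,\infty]$. Interpolating with the trivial $\|R\|_{2\to 2}\leq\eta^{-1}$ (and using $\|R e_x\|_{\ell^2}^2 = \eta^{-1}\Impt R_{xx}$), one gets useful intermediate bounds such as $\|Re_x\|_{\ell^4}\ldom \eta^{-1/4}$. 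The two conclusions of the theorem are coupled and I would propagate them together through the bootstrap.

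For the concentration bound~\eqref{eq:Rxy-conc}, the starting point is Gaussian integration by parts: since $\partial_{g_j}R_{xy} = -\lambda R_{xj}R_{jy}$, the Herbst/log-Sobolev argument gives
\[
|R_{xy}-\Expec R_{xy}|^2 \ldom \lambda^2 \sum_j |R_{xj}|^2 |R_{jy}|^2.
\]
The naive estimate $\sum_j|R_{xj}|^2|R_{jy}|^2 \leq \|Re_x\|_{\ell^4}^2 \|Re_y\|_{\ell^4}^2 \ldom \eta^{-1}$ gives only fluctuations of size $\lambda\eta^{-1/2}$, which matches the target $\lambda^{1/2}(\lambda\eta^{-1})^2$ at the threshold $\eta \sim \lambda^{2+\frac{1}{6}}$ but is insufficient for larger $\eta$. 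To sharpen it I would expand $R$ one step via the renormalized identity $R = M - M(\lambda V - \lambda^2\tilde\theta) R$ with $M = (\Delta - (z+\lambda^2\tilde\theta))^{-1}$, split off the linear-in-$V$ term (of $T_1$ type, controlled by non-commutative Khintchine, Theorem~\ref{thm:nck}), and bound the quadratic-in-$V$ remainder by GIBP again with the inductive concentration bound plugged in for the inner $R$. This second iteration converts one factor of $\eta^{-1/2}$ into a factor of $\lambda\eta^{-1/2}\cdot \|M\|_{op\to op}^{1/2}\lsim (\lambda\eta^{-1})^{1/2}$, which upgrades $\lambda\eta^{-1/2}$ to the target.

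Given the concentration bound, the local-law bound $|\Expec R_{00}-\theta|\leq \lambda^{1/2-\delta}(\lambda\eta^{-1})^3$ follows from the approximate self-consistent equation~\eqref{eq:appx-theta} in two further steps. First, I estimate the error term $\mathfrak{E}=\lambda^2(M\,\Expec\mcal{D}[R-\Expec R]R)_{00}$ by invoking the interpolation identity derived in the excerpt: writing
\[
\Expec \mcal{D}[R-\Expec R]R = \lambda^2\int_0^1 \sum_{j,k}\Expec R^q_{kj} R^q_{jk} \ket{k}\bra{k}R^1\ket{j}\bra{j}R^1\,\diff q,
\]
I plug in the entrywise concentration~\eqref{eq:Rxy-conc} together with the $\ell^p\to\ell^q$ bounds of Theorem~\ref{thm:resolvent-apriori2} factor by factor, so that each crossing sum is controlled by a restriction-type estimate rather than by $\|R\|_{op}$. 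Second, I compare the resulting equation $\tilde\theta = M(\tilde\theta)_{00}+\mathfrak{E}$ with the exact scalar equation~\eqref{eq:theta-sce}: the self-consistent map $\vartheta\mapsto (\Delta-(z+\lambda^2\vartheta))^{-1}_{00}$ has derivative $\lambda^2 ((\Delta-(z+\lambda^2\vartheta))^{-2})_{00}$, whose modulus is bounded away from $1$ uniformly in the relevant $z$ because $d(E,\Sigma_d)>\eps$ and $E\in[-2d,2d]$. Hence the map is a strict contraction, its fixed point $\theta$ exists and is stable, and $|\tilde\theta-\theta|\lsim |\mathfrak{E}|$.

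The main obstacle is really Step 3, the quantitative control of $\mathfrak{E}$: the naive bound using only $\|R\|_{op}\leq\eta^{-1}$ loses many powers of $\eta^{-1}$ and is hopeless near the threshold. One is forced to distribute the estimates in the crossing expansion between $\ell^p\to\ell^q$ bounds (which give no $\eta^{-1}$-loss but only at extremal exponents) and the entrywise concentration just established (which gives $\lambda^{1/2}(\lambda\eta^{-1})^2$ per factor). The exponent $\tfrac{1}{6}$ in $\eta \gg \lambda^{2+\frac{1}{6}-\delta}$ appears exactly as the break-even point in this trade-off, and a secondary concern is making sure the Lipschitz step in $\eta$ that drives the bootstrap is compatible with the high-probability statements, which is why one phrases everything with $\ldom$ and uses a net-plus-union-bound argument at the end.
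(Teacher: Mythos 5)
Your high-level outline — bootstrap in $\eta$, concentration of $R_{xy}$ via Gaussian integration by parts feeding the $\ell^p\to\ell^q$ a priori estimates, then stability of the scalar self-consistent equation — matches the paper's plan. But there is a real gap in the central step, the concentration estimate.

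You compute a ``naive'' bound $\sum_j|R_{xj}|^2|R_{jy}|^2 \leq \|Re_x\|_{\ell^4}^2\|Re_y\|_{\ell^4}^2\ldom \eta^{-1}$, conclude that this only gives fluctuations of size $\lambda\eta^{-1/2}$, and then propose a fix by iterating the renormalized resolvent identity once and treating the remainder via NCK plus an inductive concentration bound. This misses the actual mechanism. The paper's Lemma~\ref{lem:Rconc} obtains the stated concentration in \emph{one} application of the (higher-moment) Gaussian Poincar\'e inequality, because the a priori bound $\|R\|_{1\to 6}\ldom\lambda^2\eta^{-1}$ from Theorem~\ref{thm:resolvent-apriori2} is substantially better than anything one gets from $\|R\|_{op}\leq\eta^{-1}$. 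Interpolating as $\|R\|_{1\to 4}\leq\|R\|_{1\to 2}^{1/4}\|R\|_{1\to 6}^{3/4}$ and further factoring $\|R\|_{1\to 2}\leq\|R^{1/2}\|_{1\to 2}\|R^{1/2}\|_{2\to 2}\ldom(\lambda\eta^{-1/2})(\eta^{-1/2})$ yields $\|R\|_{1\to4}\ldom\lambda^{7/4}\eta^{-1}$, hence $|\nabla R_{xy}|^2\lsim\lambda^2\|R\|_{1\to4}^4\ldom\lambda^9\eta^{-4}$ and the fluctuation bound $\lambda^{1/2}(\lambda^2\eta^{-1})^2$. In other words, for $\eta$ in the theorem's range the ``naive'' route, correctly executed, already beats your stated $\lambda\eta^{-1/2}$; your proposed extra iteration is not needed, and the arithmetic in your claimed upgrade ($\|M\|_{op}^{1/2}$ versus $\|M\|_{op}$, the role of $\Impt\tilde\theta\gtrsim 1$) is not worked out and cannot be checked. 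The essential input you are not exploiting is precisely the Tomas--Stein-derived $\ell^1\to\ell^6$ bound that the previous chapter was set up to deliver.

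Two secondary points of divergence from the paper, both acceptable in spirit but not what the paper does. First, for the error $\mathfrak{E}$ you propose to invoke the crossing/interpolation identity for $\Expec\mcal{D}[R-\Expec R]R$; the paper presents that identity only as a structural explanation and in the actual proof bounds $\|\mcal{D}[R-\Expec R]\|_{2\to2}=\sup_x|R_{xx}-\Expec R_{xx}|$ directly by Lemma~\ref{lem:Rconc}, together with $\|\tilde M\|_{1\to2}\lsim\lambda^{-1}$ (Ward identity plus $\Impt\tilde\theta\gtrsim 1$) and $\|R\|_{1\to2}\ldom\lambda\eta^{-1}$. Second, you argue stability of the scalar equation via a contraction; this is fine and essentially equivalent to the paper's Lemma~\ref{lem:theta-stability}, which just uses the $C^1$ bound on $F$ from Lemma~\ref{lem:Freg} and does not phrase it as a fixed-point theorem. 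Fixing the concentration step to use the $\|R\|_{1\to 6}$ bound directly would both simplify your argument and make the threshold $\eta\gg\lambda^{2+\frac16-\delta}$ appear in the right place: it is where the accumulated error $\lambda^{1/2}(\lambda^2\eta^{-1})^3$ in the self-consistent equation reaches $O(1)$, not a ``break-even'' in the concentration estimate itself.
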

Note that the above theorem does not say anything interesting about the values of the resolvent far away from
the diagonal, so it is not yet clear how this is useful for Theorem~\ref{thm:resolvent-deloc}.

There are two steps to establishing Theorem~\ref{thm:local-law}.
The first step is to understand the exact self-consistent equation
\[
\theta = (\Delta - (z+\lambda^2\theta))^{-1}_{00}.
\]
In particular, one would like to know that a solution $\theta$ exists, is unique, and is bounded before one can try to understand
what happens to a perturbation.

The second step is to bound the error term in~\eqref{eq:appx-theta}.  The important term is the matrix
$\mcal{D}[R-\Expec R]$, which is a diagonal matrix with entries $R_{xx}-\Expec R_{xx}$.  Thus we can obtain a bound in operator
norm for this matrix by proving a concentration inequality for the entries $R_{xx}$.
This concentration inequality is where we use the $\|R\|_{p\to q}$ bounds proven in the previous chapter.

\subsubsection{The self-consistent equation for $\theta$}
First we analyze the equation
\begin{equation}
\label{eq:theta-sce2}
\theta = (\Delta - (z+\lambda^2\theta))^{-1}_{00}.
\end{equation}
To this end it is useful to understand the function
\[
F(z) := (\Delta - z)^{-1}_{00}.
\]
Using the Fourier transform, one can confirm that  $F(z)$ is given by
\[
F(z) = \int_{\Torus^d} \frac{\diff \xi}{\omega(\xi) - z}.
\]
This is an analytic function of $z$, and using the coarea formula we can rewrite the imaginary part as follows:
\begin{align*}
\Impt F(z)
&= \int_{\Torus^d} \frac{\eta \diff \xi}{ (\omega(\xi)-E)^2 + \eta^2} \\
&= \int_{-2d}^{2d} \frac{\eta}{(E'-E)^2+\eta^2}
\Big(\int_{\{\omega(\xi)=E'\}} \frac{1}{|\nabla \omega(\xi)|} \diff \mcal{H}^{d-1}\Big) \diff E' \\
&=: \int_{-\infty}^{\infty} \frac{\eta}{(E'-E)^2 + \eta^2} \rho(E')\diff E',
\end{align*}
where we define the density of states $\rho$
\begin{equation}
\label{eq:rho-int}
\rho(E) := \int_{\{\omega(\xi)=E\}} \frac{1}{|\nabla \omega(\xi)|}\diff \mcal{H}^{d-1}.
\end{equation}
Thus,
\[
\rho(E) = \lim_{\eta\to 0} \frac1\pi\Impt F(E+i\eta).
\]
Since $F(z)$ is analytic, we have that
\begin{equation}
\label{eq:F-from-rho}
\lim_{\eta\to 0} \frac1\pi F(E+i\eta) = \Hilb \rho(E) + i\rho(E),
\end{equation}
where $\Hilb$ is the Hilbert transform.  We note the following properties of the function $\rho$ which can be verified
from the integral formula~\eqref{eq:rho-int}:
\begin{lemma}
The function $\rho$ given by~\eqref{eq:rho-int} is supported in $[-2d,2d]$.  Moreover, $\rho\in C^\infty([-2d,2d]\setminus\Sigma_d)$,
where the $\Sigma_{\rm crit}$ is the set of critical values  $\Sigma_{\rm crit} := [-2d,2d]\cap (2d + 4\bbZ)$.  Moreover,
$\rho>0$ on $(-2d,2d)$.  Finally, $\rho$ satisfies the bound
\[
\rho(E) \leq C_d(1 + \log(E^{-1})\One_{d=2}).
\]
\end{lemma}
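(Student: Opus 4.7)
The plan is to combine the coarea formula with Morse theory for the dispersion relation $\omega(\xi) = 2\sum_{i=1}^{d} \cos \xi_i$, whose critical points are the $2^d$ corners $\xi \in \{0,\pi\}^d$ and whose critical values are $\{4k-2d : 0 \leq k \leq d\} = [-2d,2d]\cap(2d+4\bbZ) = \Sigma_d$. Define the distribution function $\mu(E) := |\{\xi \in \Torus^d : \omega(\xi) \leq E\}|$; the coarea formula identifies $\rho(E) = \mu'(E)$ in the distributional sense. Since $\omega$ takes values only in $[-2d, 2d]$, $\mu$ is constant outside this interval, giving $\supp(\rho) \subset [-2d, 2d]$. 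Since $\omega$ is nonconstant and real-analytic on the connected torus, $\mu$ is strictly increasing on $(-2d, 2d)$, so $\rho > 0$ there.

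For smoothness away from $\Sigma_d$, fix $E_0 \in (-2d, 2d) \setminus \Sigma_d$; then the level set $X_{E_0}$ is a smooth compact hypersurface by the implicit function theorem. Covering $X_{E_0}$ by finitely many coordinate charts in which $X_E$ for $E$ near $E_0$ is expressed as a graph depending smoothly on $E$, and substituting these parametrizations into the surface integral defining $\rho$, one finds that $\rho$ is locally a finite sum of integrals depending smoothly on $E$, hence $C^\infty$ near $E_0$.

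For the quantitative bound, smoothness together with compactness yields uniform boundedness of $\rho$ on $[-2d,2d]$ outside small neighborhoods of $\Sigma_d$, so it suffices to control $\rho$ near each $E_* \in \Sigma_d$. The Morse lemma provides smooth local coordinates $y = (y_-, y_+) \in \Real^{d_-} \times \Real^{d_+}$ near each critical point in which $\omega - E_* = -|y_-|^2 + |y_+|^2$ with $d_- + d_+ = d$. After substitution, the local contribution to $\rho(E)$ reduces, up to smooth bounded factors, to an integral of $|\nabla\omega|^{-1}$ over a quadric of signature $(d_-, d_+)$ at level $E-E_*$. For elliptic critical points ($d_-=0$ or $d_+=0$) this integral scales like $|E-E_*|^{(d-2)/2}$, bounded for $d \geq 2$. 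For saddle points ($d_\pm \geq 1$) a direct calculation yields $O(\log|E-E_*|^{-1})$ in $d=2$ and $O(1)$ in $d \geq 3$. Since the only saddle critical value in $d=2$ is $E_*=0$ (contributed by the saddles at $(0,\pi)$ and $(\pi,0)$), this produces the claimed $\log(E^{-1})$ factor.

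The main obstacle is the Morse-normal-form calculation at saddle points, where one must verify that the van Hove singularity in $d=2$ is genuinely logarithmic rather than power-law, and that the Morse lemma's change-of-variables contributes only bounded smooth factors that do not alter the leading behavior. The remaining claims reduce to soft arguments: the coarea formula for support and positivity, and the implicit function theorem for smoothness.
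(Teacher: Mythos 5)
The paper does not actually prove this lemma---it states that the properties ``can be verified from the integral formula''---so there is no in-text argument to compare against. Your proof via the coarea formula and Morse theory is the standard route and is correct. Two small points worth making explicit. First, the Morse lemma requires the critical points to be nondegenerate; for $\omega(\xi)=2\sum_i\cos\xi_i$ the Hessian at $\xi\in\{0,\pi\}^d$ is diagonal with entries $\mp2$, so this holds automatically but should be said, since the whole normal-form reduction rests on it. Second, near a saddle with signature $(d_-,d_+)$, the local contribution after the Morse change of variables is, up to bounded smooth factors, $\int_{\{|y_+|^2-|y_-|^2=E-E_*,\,|y|\le\eps\}} |y|^{-1}\diff\mcal{H}^{d-1}$; a short computation in polar-type coordinates shows this is $\asymp\log|E-E_*|^{-1}$ only when $d_-=d_+=1$, i.e.\ $d=2$, while for $d\ge3$ the integral converges because the codimension-one sphere at radius $|y|\sim t$ contributes $t^{d-2}$ against the $t^{-1}$ weight. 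You assert this but do not display the calculation; since the $d=2$ logarithm is the only nontrivial quantitative output of the lemma, it would be worth writing out the one-line scaling argument.
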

Then using the formula~\eqref{eq:F-from-rho} and also the fact that $F(E+i\eta)$ is given by a convolution of $F(E+i0^+)$ with
the Poisson kernel, we have the following bounds for $F$:
\begin{lemma}
\label{lem:Freg}
For $\eta>0$, the function $F$ satisfies
\begin{equation}
\label{eq:Fbd}
|F(E+i\eta)| \lsim |\log\eta^{-1}|.
\end{equation}
Moreover,  on the domain $D_\eps := \{E+i\eta \mid \eta >0, E\in[-2d,2d], d(E,\Sigma_d) > \eps\}$, we have
\begin{equation}
\label{eq:Feq}
\|F\|_{C^1(D_\eps)} \leq C_\eps.
\end{equation}
\end{lemma}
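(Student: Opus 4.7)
The plan is to reduce the multidimensional integral defining $F$ to a one-dimensional integral against the density of states. The same coarea computation already performed in the paper for $\Impt F$ applies verbatim to both real and imaginary parts of $1/(\omega(\xi)-z)$, giving the Cauchy-transform representation
\[
F(z) = \int_{-2d}^{2d} \frac{\rho(E')}{E'-z}\,dE'.
\]
Both bounds will then follow from the regularity of $\rho$ recalled in the preceding lemma: $\rho$ is supported in $[-2d,2d]$, smooth on $[-2d,2d]\setminus\Sigma_d$, and bounded by $C_d(1+\log|E-\Sigma_d|^{-1}\One_{d=2})$.

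For the pointwise bound~\eqref{eq:Fbd}, I would just estimate in absolute value,
\[
|F(E+i\eta)| \leq \int_{-2d}^{2d} \frac{\rho(E')\,dE'}{\sqrt{(E'-E)^2+\eta^2}}.
\]
In $d\geq 3$, $\rho$ is bounded and the remaining integral is $O(\log\eta^{-1})$. In $d=2$, the additional logarithmic singularity of $\rho$ at the critical values is integrable, contributing at most a further log factor, which I read as absorbed into ``$|\log\eta^{-1}|$'' on the right-hand side (or removed by a slightly more careful argument trading one log for smoothness of the Poisson kernel).

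For the $C^1$ bound~\eqref{eq:Feq}, I would exploit that on $D_\eps$ the function $\rho$ is $C^\infty$ in a full $\eps$-neighborhood of every energy of interest. For any fixed $E_0 \in D_\eps$, pick a smooth cutoff $\chi$ supported in the $\eps/2$-neighborhood of $E_0$ and equal to $1$ on the $\eps/4$-neighborhood, so that $\chi\rho \in C_c^\infty$. Split
\[
F(z) = \int \frac{\chi\rho(E')}{E'-z}\,dE' + \int \frac{(1-\chi)\rho(E')}{E'-z}\,dE'.
\]
For $z$ with $|\Rept z - E_0|<\eps/8$, the integrand of the second term is supported at distance $\gtrsim\eps$ from $z$, so that term and all its $z$-derivatives are uniformly bounded by $C_\eps$. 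The first term is the Cauchy transform of a $C_c^\infty$ function, which is standardly known to extend smoothly to $\{\Impt z\geq 0\}$ with uniform $C^1$-bounds on compact sets not touching the endpoints of $\supp(\chi\rho)$; one sees this either by subtracting $\chi\rho(\Rept z)$ from the integrand to kill the would-be logarithm, or by integrating by parts in $E'$ to move the $z$-derivative onto $\chi\rho$. A finite cover of $D_\eps$ in the $\Rept z$-direction by such base points $E_0$, combined with the trivial large-$\eta$ bound $|F|,|F'|\lesssim\eta^{-1}$ for $\eta\geq 1$, yields $\|F\|_{C^1(D_\eps)}\leq C_\eps$.

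The main obstacle, in my view, is bookkeeping the logarithmic singularities of $\rho$ at $\Sigma_d$ in dimension two for the first bound; for the second bound the assumption $d(E,\Sigma_d)>\eps$ keeps those singularities away from the region of interest and the argument is fairly routine.
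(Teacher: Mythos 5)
Your route — pass through the coarea formula to get the Cauchy-transform representation $F(z)=\int\rho(E')(E'-z)^{-1}\,dE'$, then read off both bounds from the regularity of $\rho$ — is the same one the paper sketches (its stated one-line justification, ``using \eqref{eq:F-from-rho} and the Poisson convolution,'' is exactly this representation written as $\Hilb\rho+i\rho$ smoothed by the Poisson kernel). Your $C^1$ argument via localization around a base energy and one integration by parts onto $(\chi\rho)'$ is correct: after the IBP the near-field piece becomes the Cauchy transform of a $C^1_c$ function, which is bounded on the interior of its support uniformly in $\eta>0$, and the far-field piece is trivially $C^\infty$.

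The one real gap is the pointwise bound~\eqref{eq:Fbd} in $d=2$, and it is not cosmetic. Passing to absolute values inside the Cauchy integral gives
\[
\int_{-2d}^{2d}\frac{\rho(E')\,dE'}{\sqrt{(E'-E)^2+\eta^2}},
\]
and when $E$ sits near the saddle value (where $\rho(E')\sim\log|E'-E|^{-1}$) this integral genuinely grows like $(\log\eta^{-1})^2$: the region $\eta<|E'-E|<1$ already contributes $\int_\eta^1\frac{\log(1/s)}{s}\,ds\sim\tfrac12(\log\eta^{-1})^2$. So the second log cannot simply be ``read as absorbed''; your crude estimate proves a strictly weaker statement. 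What saves the single-log bound is a sign cancellation that taking absolute values destroys: in the representation $F(E+i0^+)=\pi(\Hilb\rho+i\rho)$, the Hilbert transform of a logarithmic singularity is \emph{bounded} (e.g.\ $\Hilb[\log|x|]=-\tfrac\pi2\,\mathrm{sgn}$), so $\Rept F$ stays $O(1)$ near the saddle and only $\Impt F\sim\rho$ carries the log; equivalently, in the $\xi$-integral near a saddle $\omega\approx a\xi_1^2-b\xi_2^2$, the contributions from $\omega>0$ and $\omega<0$ cancel the $(\log)^2$-sized real parts. If you want to keep your route you should insert this observation; otherwise note that $(\log\eta^{-1})^2$ (or even $\eta^{-\delta}$) would in fact suffice for the only downstream use, which is the crude input $|\tilde\theta|\lesssim\log\lambda^{-1}$ in Lemma~\ref{lem:theta-stability}. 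In $d\geq 3$ your argument for \eqref{eq:Fbd} is complete as written.
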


As a consequence, we can prove a uniqueness result for solutions to~\eqref{eq:appx-theta}.
\begin{lemma}
\label{lem:theta-stability}
Let $z=E+i\eta$ for $E\in[-2d,2d]$, $d(E,\Sigma_d)>\eps$, and $\eta > \lambda^{10}$.
Suppose that $\theta' = \theta'(z)$ solves
\[
\theta' = F(z+\lambda^2 \theta') + \gamma
\]
for some error $|\gamma| < 1$.  Then for $\lambda < \lambda(\eps)$, we have the estimate
Then
\[
|\theta' - F(z)| \lsim |\gamma|.
\]
\end{lemma}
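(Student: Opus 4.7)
The plan is to treat this as a perturbation of a contraction on the scalar variable $\theta$. Define the map
\[
T(\theta) := F(z + \lambda^2 \theta) + \gamma,
\]
so that by hypothesis $\theta'$ is a fixed point of $T$. Using the uniform $C^1$ bound $\|F\|_{C^1(D_\eps)}\leq C_\eps$ from Lemma~\ref{lem:Freg}, we have $|T'(\theta)| \leq \lambda^2 C_\eps$ whenever $z+\lambda^2\theta$ lies in $D_\eps$. For $\lambda<\lambda(\eps)$ chosen so that $\lambda^2 C_\eps \leq \tfrac12$, $T$ is then a strict contraction, and the perturbation result should follow from comparing the fixed point $\theta'$ to the base point $F(z)$.

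The concrete execution runs as follows. First I would secure an a priori bound on $|\theta'|$. From the equation itself, $|\theta'|\leq |F(z+\lambda^2\theta')| + |\gamma| \lsim |\log\eta^{-1}| + 1 \lsim |\log\lambda|$ once we know $z+\lambda^2\theta'\in D_{\eps/2}$, by~\eqref{eq:Fbd} and $|\gamma|<1$. The latter is self-consistent: it requires only $|\lambda^2\theta'|\ll \eps$, which follows from any polynomial-in-$\log\lambda$ bound on $|\theta'|$ when $\lambda$ is small. With this in hand, set $u := \theta' - F(z)$ and write
\[
u = F(z+\lambda^2\theta') - F(z) + \gamma.
\]
The mean value theorem together with \eqref{eq:Feq} bounds the first difference by $\lambda^2 C_\eps |\theta'| \leq \lambda^2 C_\eps(|u| + |F(z)|)$. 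Plugging in $|F(z)|\lsim |\log\eta^{-1}|\lsim |\log\lambda|$ from the assumption $\eta>\lambda^{10}$ and~\eqref{eq:Fbd} yields
\[
|u| \leq \lambda^2 C_\eps |u| + \lambda^2 C_\eps|\log\lambda| + |\gamma|,
\]
and absorbing the $|u|$ term on the left gives $|u| \lsim \lambda^2|\log\lambda| + |\gamma|$. In the regime in which the lemma is applied (namely $|\gamma|\gtrsim \lambda^2|\log\lambda|$, which is the generic scale of the fluctuation error $\mathfrak{E}$ entering~\eqref{eq:appx-theta}), the first term is absorbed into the $|\gamma|$ bound, recovering $|\theta' - F(z)|\lsim|\gamma|$.

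The main obstacle I anticipate is the a priori step of verifying $z+\lambda^2\theta'\in D_\eps$, since the $C^1$ control of $F$ used throughout degenerates near the real axis and near the critical values $\Sigma_d$. The hypothesis $|\gamma|<1$ is exactly what is needed to initiate the bootstrap: it provides a crude bound $|\theta'|\leq |F(z+\lambda^2\theta')|+1$ that can be iterated into $|\theta'|\lsim|\log\lambda|$ via a short continuation-in-$\eta$ argument (starting from large $\eta$, where $|F(z)|$ is tiny and the contraction is trivial, and decreasing $\eta$ down to $\lambda^{10}$, noting that $\theta'$ depends continuously on $z$ and on $\gamma$ by the implicit function theorem applied to the contraction). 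Once the bootstrap is closed, the remainder of the argument is a one-variable Lipschitz perturbation calculation.
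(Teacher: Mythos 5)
Your proof is correct and follows essentially the same route as the paper: an a priori bound $|\theta'|\lsim\log\lambda^{-1}$ to place $z+\lambda^2\theta'$ in $D_\eps$, followed by the $C^1$ estimate~\eqref{eq:Feq} and absorption of the $\lambda^2|\theta'-F(z)|$ term. The only minor difference is that the paper bounds $|F(z)|$ by a constant on $D_\eps$ directly from~\eqref{eq:Feq} (yielding the residual $\lambda^2+|\gamma|$ rather than your $\lambda^2|\log\lambda|+|\gamma|$), and it asserts the a priori bound on $|\theta'|$ without spelling out the continuation-in-$\eta$ bootstrap that you correctly identify as needed to justify it; both of these are cosmetic.
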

\begin{proof}
First, by~\eqref{eq:Fbd} we have the estimate
\[
|\theta'| \lsim \log\lambda^{-1}.
\]
But then, for $\lambda$ small enough it follows that $B_{\lambda^2\theta'}(z)\subset D_\eps$ where $D_\eps$ is the
domain defined in Lemma~\ref{lem:Freg}.  By the $C^1$ regularity estimate~\eqref{eq:Feq} it therefore follows that
\begin{align*}
|\theta' - F(z)|
&\leq |F(z+\lambda^2\theta')- F(z)| + |\gamma| \\
&\lsim  \lambda^2 |\theta'| + |\gamma|.
\end{align*}
Since $F(z)$ is bounded, we conclude that
\[
|\theta' - F(z)| \lsim \lambda^2 + |\gamma|.
\]
\end{proof}

\subsubsection{Concentration of the resolvent entries}
In this section we prove a concentration inequality for $R_{xy}$.  First we recall the Gaussian Poincar\'e inequality.

\begin{lemma}[Gaussian Poincar\'e]
For $f\in C^\infty(\Real^N)$ and $Z = (Z_1,\cdots,Z_N)$ $N$ independent standard Gaussian random variables,
\[
\operatorname*{Var} f(Z) \leq \Expec \sum_j |\partial_{Z_j} f(Z)|^2 =: \Expec |\nabla f|^2.
\]
\end{lemma}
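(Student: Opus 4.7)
The plan is to prove this classical inequality by a one-dimensional rotational interpolation combined with Gaussian integration by parts, which fits naturally with the Gaussian integration by parts formalism already used throughout these notes. Let $Z' = (Z_1', \ldots, Z_N')$ be an independent copy of $Z$, and for $\theta \in [0, \pi/2]$ set
\[
Z_\theta := \cos\theta \cdot Z + \sin\theta \cdot Z', \qquad Z_\theta^\perp := -\sin\theta \cdot Z + \cos\theta \cdot Z'.
\]
The key structural observation is that the map $(Z, Z') \mapsto (Z_\theta, Z_\theta^\perp)$ preserves the joint standard Gaussian law, so for each $\theta$ the pair $(Z_\theta, Z_\theta^\perp)$ is itself a pair of independent standard Gaussian vectors (and in particular $Z_\theta$ has the same law as $Z$). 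Moreover $Z = \cos\theta \cdot Z_\theta - \sin\theta \cdot Z_\theta^\perp$ and $\frac{d}{d\theta} Z_\theta = Z_\theta^\perp$.

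First I would rewrite the variance using that $Z$ and $Z'$ are i.i.d.\ as $\operatorname{Var}(f(Z)) = \Expec[f(Z)(f(Z) - f(Z'))]$, and then express the difference via the fundamental theorem of calculus as $f(Z) - f(Z') = -\int_0^{\pi/2} \nabla f(Z_\theta) \cdot Z_\theta^\perp \, d\theta$. Next I would apply Gaussian integration by parts in each coordinate of $Z_\theta^\perp$, treating $(Z_\theta, Z_\theta^\perp)$ as the underlying independent Gaussian pair: since $Z$ depends on $Z_\theta^\perp$ with coefficient $-\sin\theta$ while $Z_\theta$ does not, one obtains
\[
\Expec[f(Z) \, \nabla f(Z_\theta) \cdot Z_\theta^\perp] = -\sin\theta \cdot \Expec[\nabla f(Z) \cdot \nabla f(Z_\theta)].
\]
Combining these identities gives
\[
\operatorname{Var}(f(Z)) = \int_0^{\pi/2} \sin\theta \cdot \Expec[\nabla f(Z) \cdot \nabla f(Z_\theta)] \, d\theta,
\]
after which a Cauchy--Schwarz inequality together with $Z_\theta \stackrel{d}{=} Z$ bounds each integrand by $\Expec|\nabla f|^2$, and $\int_0^{\pi/2} \sin\theta \, d\theta = 1$ delivers the stated inequality with the sharp constant.

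There is no serious obstacle here---this is a textbook inequality whose proof collapses to a one-line computation once the rotational coupling is in place. The only delicate point is the integration-by-parts bookkeeping: one must remember that $Z$ itself depends on $Z_\theta^\perp$, while $Z_\theta$ does not, so the only contribution is through the chain-rule factor $-\sin\theta$. An essentially equivalent semigroup-based proof would introduce the Ornstein--Uhlenbeck semigroup $P_t f(x) := \Expec f(e^{-t} x + \sqrt{1 - e^{-2t}} G)$, express $\operatorname{Var}(f) = 2 \int_0^\infty \Expec |\nabla P_t f|^2 \, dt$, and use the commutation $\nabla P_t f = e^{-t} P_t \nabla f$ together with Jensen's inequality; but the interpolation proof above is the more natural match to the notes' methodology.
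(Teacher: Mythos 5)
The paper states this lemma without proof, treating it as a standard background fact. Your rotational-interpolation argument (the Maurey--Pisier coupling) is correct and gives the sharp constant: the identity $\operatorname{Var}(f(Z)) = \int_0^{\pi/2}\sin\theta\,\Expec[\nabla f(Z)\cdot\nabla f(Z_\theta)]\,d\theta$ follows exactly as you describe, the Gaussian integration by parts in the $(Z_\theta, Z_\theta^\perp)$ coordinates is applied correctly (with the chain-rule factor $-\sin\theta$ coming only from $Z$'s dependence on $Z_\theta^\perp$), and Cauchy--Schwarz together with $Z_\theta \overset{d}{=} Z$ and $\int_0^{\pi/2}\sin\theta\,d\theta = 1$ closes the argument. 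This is a natural choice here since it reuses the same Gaussian integration-by-parts tool the notes lean on elsewhere (the non-commutative Khintchine proof, the self-consistent equation derivations). The one technical point worth flagging, especially since the notes apply the result to resolvent entries, is that $f\in C^\infty$ alone does not guarantee the finiteness needed to justify Fubini and the integration by parts; one should either assume moderate growth on $f$ and $\nabla f$ or note that the application is to bounded functions of $Z$ (resolvent entries at fixed $\eta>0$ are bounded with bounded derivatives), which makes everything converge. Your semigroup alternative is also correct and is the more common textbook route, but as you say the coupling proof fits the notes' toolkit better.
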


Applying the Gaussian Poincare inequality to $|f|^k$ we have the following result for higher order moments:
\begin{lemma}
\label{lem:higher-GPI}
For $f\in C^\infty(\Real^N)$ with $\Expec f = 0$, we have for $k\geq 1$ the bound
\[
\Expec |f|^{2k} \leq (Ck)^{2k} \Expec |\nabla f|^{2k}.
\]
\end{lemma}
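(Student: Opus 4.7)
The plan is to apply the standard Gaussian Poincaré inequality to the auxiliary function $F := |f|^k$; the key point is that the ``missing'' mean term $(\Expec|f|^k)^2$, which appears as the complement of $\operatorname{Var}(F)$ in $\Expec F^2$, can be controlled using the mean-zero hypothesis on $f$ together with a second application of Gaussian Poincaré (this time to $f$ itself).

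First, after a routine mollification (e.g.\ replacing $F$ by $(f^2+\eps^2)^{k/2}$ and letting $\eps\to 0$), the Gaussian Poincaré inequality applied to $F = |f|^k$, together with the identity $|\nabla F|^2 = k^2 |f|^{2k-2}|\nabla f|^2$, yields
\[
\Expec |f|^{2k} - (\Expec |f|^k)^2 = \operatorname{Var}(F) \leq k^2\,\Expec |f|^{2k-2}|\nabla f|^2.
\]
Second, the mean-zero hypothesis enters through Cauchy--Schwarz plus the first-order Gaussian Poincaré inequality applied to $f$ itself:
\[
(\Expec |f|^k)^2 = \bigl(\Expec |f|^{k-1}\cdot |f|\bigr)^2 \leq \Expec |f|^{2k-2}\cdot \Expec f^2 \leq \Expec |f|^{2k-2}\cdot \Expec |\nabla f|^2.
\]

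Combining these two bounds and then applying Hölder's inequality with exponents $(k/(k-1),\,k)$ to each of the products $\Expec|f|^{2k-2}\cdot\Expec|\nabla f|^2$ and $\Expec|f|^{2k-2}|\nabla f|^2$ (so that every factor of $|f|^{2k-2}$ is absorbed into $(\Expec|f|^{2k})^{(k-1)/k}$ and every factor of $|\nabla f|^2$ into $(\Expec|\nabla f|^{2k})^{1/k}$) gives
\[
\Expec |f|^{2k} \leq (1+k^2)\,(\Expec |f|^{2k})^{(k-1)/k}(\Expec |\nabla f|^{2k})^{1/k}.
\]
Rearranging yields $\Expec |f|^{2k} \leq (1+k^2)^k\,\Expec |\nabla f|^{2k} \leq (Ck)^{2k}\,\Expec |\nabla f|^{2k}$ with $C = \sqrt{2}$. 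The only step requiring any care is the mollification justifying the Poincaré inequality for $|f|^k$ when $k$ is not an even integer; this is entirely standard and does not change the constants in the limit.
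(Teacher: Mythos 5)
Your proof is correct, and it follows exactly the route the paper indicates: the paper's surrounding text says ``Applying the Gaussian Poincar\'e inequality to $|f|^k$ we have the following result,'' and your argument carries that out, with the mean-zero hypothesis entering precisely to control $(\Expec|f|^k)^2$ via Cauchy--Schwarz plus a second application of Poincar\'e to $f$ itself, followed by H\"older and a rearrangement. The only point you leave implicit (beyond the mollification you already flag) is that the rearrangement step requires $\Expec|f|^{2k}<\infty$ a priori; this is harmless here since in the paper's application $f$ is an entry of a bounded finite-dimensional resolvent, and in general it can be dispatched by a routine truncation.
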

%
We will apply Lemma~\ref{lem:higher-GPI} to the resolvent entries $R_{xy}$, thought of as functions of the Gaussian potential
$g_x$.  Therefore we compute
\begin{equation}
\label{eq:R-conc}
\begin{split}
|\nabla R_{xy}|^2
&= \sum_w |\frac{\partial}{\partial g_w} R_{xy}|^2 \\
&= \lambda^2 \sum_w |R_{xw} R_{wy}|^2 \\
&\leq \lambda^2 (\sum_w |R_{xw}|^4)^{1/2} (\sum_w |R_{yw}|^4)^{1/2} \\
&\leq \lambda^2 \|R\|_{1\to 4}^4.
\end{split}
\end{equation}

By Theorem~\ref{thm:resolvent-apriori}, we have $\|R\|_{1\to 6} \ldom \lambda^2\eta^{-1}$ (we are taking $\eta \ll \lambda^2$).
Therefore (recalling $R=R(z)=R(E+i\eta)$),
\begin{align*}
\|R\|_{1\to 4}
&\leq \|R\|_{1\to 2}^{1/4} \|R\|_{1\to 6}^{3/4} \\
&\leq \|R^{1/2}\|_{1\to 2}^{1/4} \|R^{1/2}\|_{2\to 2}^{1/4} \|R\|_{1\to 6}^{3/4} \\
&\ldom (\lambda \eta^{-1/2})^{1/4} (\eta^{-1/2})^{1/4} (\lambda^2\eta^{-1})^{3/4} \\
\end{align*}

Combined with Lemma~\ref{lem:higher-GPI} and rearranging we conclude the following:

\begin{lemma}
\label{lem:Rconc}
Let $R = R(z)=R(E+i\eta)$ for a good energy $E$ and $\eta \ll \lambda^2$.  Then
\[
|R_{xy}-\Expec R_{xy}| \ldom \lambda^{1/2}(\lambda^2\eta^{-1})^2
\]
\end{lemma}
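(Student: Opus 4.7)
The plan is to apply the higher-order Gaussian Poincaré inequality (Lemma~\ref{lem:higher-GPI}) to the function $f = R_{xy} - \Expec R_{xy}$, viewed as a function of the Gaussian variables $(g_w)_{w \in \bbZ^d_L}$. To make this useful, we need good moment bounds on $|\nabla R_{xy}|^2$, which in turn reduce to controlling $\ell^p \to \ell^q$ norms of $R$ via the a priori estimates from Theorem~\ref{thm:resolvent-apriori} and the transfer lemma for $R^{1/2}$.

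First I would differentiate using the resolvent identity: since $H$ depends linearly on each $g_w$ through $\lambda \ket{w}\bra{w}$, one gets $\partial_{g_w} R = -\lambda R \ket{w}\bra{w} R$, hence $\partial_{g_w} R_{xy} = -\lambda R_{xw} R_{wy}$. Summing in $w$ and applying Cauchy--Schwarz pointwise gives
\[
|\nabla R_{xy}|^2 = \lambda^2 \sum_w |R_{xw}|^2 |R_{wy}|^2 \leq \lambda^2 \Big(\sum_w |R_{xw}|^4\Big)^{1/2} \Big(\sum_w |R_{wy}|^4\Big)^{1/2} \leq \lambda^2 \|R\|_{1\to 4}^4,
\]
a clean deterministic estimate that removes the dependence on $x,y$.

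Next I would bound $\|R\|_{1\to 4}$ by Riesz--Thorin interpolation between $\|R\|_{1\to 2}$ and $\|R\|_{1\to 6}$: the weights $1/4 = \frac{1}{4}\cdot\frac{1}{2} + \frac{3}{4}\cdot\frac{1}{6}$ give $\|R\|_{1\to 4} \leq \|R\|_{1\to 2}^{1/4} \|R\|_{1\to 6}^{3/4}$. For the second factor, Theorem~\ref{thm:resolvent-apriori} yields $\|R\|_{1\to 6} \ldom \lambda^2 \eta^{-1}$ in the relevant regime $\eta \gtrsim \lambda^2$ (and the extension down to smaller $\eta$ via~\eqref{eq:small-eta-R} costs at most the factor $\lambda^2\eta^{-1}+1$). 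For the first factor, I would write $R = R^{1/2} R^{1/2}$ and apply the transfer lemma $\|R^{1/2}\|_{X \to 2} \ldom \|R_0^{1/2}\|_{X\to 2}$ together with $\|R^{1/2}\|_{2\to 2} = \eta^{-1/2}$, producing a bound of the form $\|R\|_{1\to 2} \ldom \lambda \eta^{-1}$. Combining these gives $\|R\|_{1\to 4}^4 \ldom \lambda (\lambda^2\eta^{-1})^3 \cdot \eta^{-1}$, and so $|\nabla R_{xy}|^2 \ldom \lambda^3 \eta^{-1} (\lambda^2\eta^{-1})^3$, i.e.\ $|\nabla R_{xy}| \ldom \lambda^{1/2} (\lambda^2 \eta^{-1})^2$.

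Finally I would feed this into Lemma~\ref{lem:higher-GPI}: for each integer $k$, $\Expec |R_{xy} - \Expec R_{xy}|^{2k} \leq (Ck)^{2k} \Expec |\nabla R_{xy}|^{2k}$, and Markov's inequality together with the stochastic domination bound on $|\nabla R_{xy}|$ gives the desired conclusion $|R_{xy} - \Expec R_{xy}| \ldom \lambda^{1/2}(\lambda^2 \eta^{-1})^2$. The main subtlety — not a deep obstacle, but the one requiring care — is that $\ldom$ is a high-probability statement rather than an $L^p$ bound, so to legitimately combine it with the Poincaré moment inequality one needs to split on the good event where all relevant $\|R\|_{p\to q}$ bounds hold and use the deterministic fallback $\|R\|_{\mathrm{op}} \leq \eta^{-1}$ on the complement, whose probability $O(\lambda^N)$ is small enough to make its contribution negligible for any fixed $k$.
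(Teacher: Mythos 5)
Your proposal is correct and follows essentially the same route as the paper: the gradient bound $|\nabla R_{xy}|^2\leq\lambda^2\|R\|_{1\to4}^4$, interpolation $\|R\|_{1\to4}\leq\|R\|_{1\to2}^{1/4}\|R\|_{1\to6}^{3/4}$, the a priori $\|R\|_{1\to6}\ldom\lambda^2\eta^{-1}$ bound and the $R^{1/2}$-transfer argument for $\|R\|_{1\to2}\ldom\lambda\eta^{-1}$, followed by the higher-order Gaussian Poincar\'e inequality; the exponents also match. Your closing remark about splitting on the high-probability event where the $\|R\|_{p\to q}$ bounds hold (with the deterministic fallback $\|R\|_{\rm op}\leq\eta^{-1}$ on the complement) is a genuine subtlety that the paper glosses over, and it is handled exactly as you describe.
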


\subsubsection{Conclusion of the proof of Theorem~\ref{thm:local-law}}
We can now apply Lemma~\ref{lem:Rconc} to analyze the self-consistent equation~\eqref{eq:appx-theta} which we recall
\begin{equation*}
\tilde{\theta} = (\Delta - (z+\lambda^2\tilde{\theta}))^{-1}_{00} + \lambda^2
((\Delta - (z+\lambda^2\tilde{\theta}))^{-1} \Expec \mcal{D}[R-\Expec R]R)_{00}.
\end{equation*}
The error term on the right, $\mathfrak{e}$, is bounded as follows:
\begin{align*}
|\mathfrak{e}|
&= \lambda^2 |\braket{0|(\Delta-(z+\lambda^2\tilde{\theta}))^{-1} \Expec \mcal{D}[R-\Expec R] R| 0}| \\
&\leq \lambda^2 \|(\Delta - (z+\lambda^2\tilde{\theta}))^{-1}\|_{1\to 2} \Expec \|\mcal{D}[R-\Expec R] R\|_{1\to 2}.
\end{align*}
For the calculation below we will assume that we already know that $\tilde{\theta}=O(1)$ and that $\Impt\tilde{\theta} \gtrsim 1$.
These bounds are \textit{outputs} of our argument and at first this may appear circular,
and this circularity will be addressed shortly.
For now we will pretend we have this information already in hand.  To bound $\|(H-z)^{-1}\|_{1\to 2}$ we use the Ward
identity, valid for any symmetric $H$ and $\eta>0$:
\[
\sum_y |(H-z)^{-1}_{xy}|^2 = \eta^{-1} \Impt (H-z)^{-1}_{xx}.
\]
In particular,
\[
\|R\|_{1\to 2}^2 = \max_x \sum_y |R_{xy}|^2 \leq \eta^{-1} \max_x |R_{xx}|.
\]
Applying this with $H=\Delta$, and assuming that $\Impt\tilde{\theta}\gtrsim 1$ we therefore have
\[
\|(\Delta-(z+\lambda^2\tilde{\theta}))^{-1}\|_{1\to 2} \lsim \lambda^{-1}.
\]
Therefore
\begin{align*}
|\mathfrak{e}| &\lsim \lambda
\Big(\Expec \|\mcal{D}[R-\Expec R]\|_{2\to 2}^2\Big)^{1/2}
\Big(\Expec \|R\|_{1\to 2}^2\Big)^{1/2}  \\
&\lsim \lambda^{-\delta} \lambda \Big(\Expec (\sup_x |R_{xx} - \Expec R_{xx}|^2)\Big)^{1/2}
(\lambda \eta^{-1}) \\
&\lsim \lambda^{\frac12-\delta} (\lambda^2\eta^{-1})^3.
\end{align*}

In conclusion, we have
\[
\tilde{\theta} = (\Delta + (z+\lambda^2\tilde{\theta}))^{-1}_{00} + O(\lambda^{\frac12-\delta} (\lambda^2\eta^{-1})^3).
\]
Theorem~\ref{thm:local-law} now follows from Lemma~\ref{lem:theta-stability}.

\subsubsection{On the assumptions on $\tilde{\theta}$.}

What we have actually proved by the above argument is the following lemma.
\begin{lemma}
\label{lem:bootstrap}
Suppose that $\tilde{\theta}(z) := \Expec R(z)_{00}$ satisfies
$|\tilde{\theta}|\leq C$ and $\Impt\tilde{\theta}\geq c$.  Then for $\theta=\theta(z)$ solving~\eqref{eq:theta-sce2},
\[
|\tilde{\theta} - \theta| \lsim \lambda^{\frac12-\delta} (\lambda^2\eta^{-1})^3,
\]
\end{lemma}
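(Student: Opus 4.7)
The lemma is essentially a repackaging of the calculation in the preceding subsection, so the proof plan is to trace through that calculation, isolating where the two standing hypotheses $|\tilde\theta|\leq C$ and $\Impt\tilde\theta \geq c$ enter. The starting point is the approximate self-consistent equation
\begin{equation*}
\tilde\theta \;=\; (\Delta - (z+\lambda^2\tilde\theta))^{-1}_{00} + \mathfrak{e}, \qquad \mathfrak{e} \;:=\; \lambda^2 \braket{0|(\Delta-(z+\lambda^2\tilde\theta))^{-1} \Expec \mcal{D}[R-\Expec R] R|0},
\end{equation*}
obtained by taking the $00$ entry of the general identity~\eqref{eq:general-ER-eq} specialized to $\mcal{A}=\lambda^2\mcal{D}$. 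The goal is to show $|\mathfrak{e}|\lsim \lambda^{1/2-\delta}(\lambda^2\eta^{-1})^3$, after which Lemma~\ref{lem:theta-stability} applied with $\gamma = \mathfrak{e}$ gives the claim directly.

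To bound $\mathfrak{e}$, I would split
\begin{equation*}
|\mathfrak{e}| \;\leq\; \lambda^2\, \|(\Delta - (z+\lambda^2\tilde\theta))^{-1}\|_{1\to 2}\, \Expec \bigl(\|\mcal{D}[R-\Expec R]\|_{2\to 2}\, \|R\|_{1\to 2}\bigr).
\end{equation*}
The first factor is where the hypotheses are used: because $|\tilde\theta|\leq C$ and $\Impt\tilde\theta\geq c$, the shifted spectral parameter $z+\lambda^2\tilde\theta$ stays in a compact region with imaginary part $\gtrsim \lambda^2$, so the Ward identity applied to $H=\Delta$ gives $\|(\Delta-(z+\lambda^2\tilde\theta))^{-1}\|_{1\to 2} \lsim \lambda^{-1}$. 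For the middle factor I would apply Lemma~\ref{lem:Rconc} pointwise in $x$ and absorb the union bound over $x\in\bbZ^d_L$ into the $\lambda^{-\eps}$ slack in $\ldom$ (since $L=\lambda^{-100}$), obtaining $\|\mcal{D}[R-\Expec R]\|_{2\to 2} = \sup_x |R_{xx}-\Expec R_{xx}| \ldom \lambda^{1/2}(\lambda^2\eta^{-1})^2$. For the last factor I would use the Ward identity a second time together with the trivial bound $\Impt R_{xx} \leq \|R\|_{\ell^2\to\ell^2} \leq \eta^{-1}$, giving $\|R\|_{1\to 2} \lsim \eta^{-1/2}$, and upgrade this using Theorem~\ref{thm:resolvent-apriori} to $\|R\|_{1\to 2} \lsim \lambda \eta^{-1}$ in the small-$\eta$ regime via $\Impt R_{xx} \lsim \lambda^2 \eta^{-1}$ coming from the $1\to\infty$ bound. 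Multiplying the three pieces yields $|\mathfrak{e}|\lsim \lambda\cdot \lambda^{1/2-\delta}(\lambda^2\eta^{-1})^2 \cdot \lambda\eta^{-1} = \lambda^{1/2-\delta}(\lambda^2\eta^{-1})^3$, as required.

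The main obstacle is not really inside this lemma at all, but sits just outside it: the hypotheses $|\tilde\theta|\leq C$ and $\Impt\tilde\theta \geq c$ are genuine \emph{consequences} of the full local law, so their appearance as assumptions is circular. Removing that circularity is exactly the role this lemma plays in a continuity-in-$\eta$ bootstrap, where one starts at some large $\eta_0 \sim 1$ (for which $R\approx R_0$ and the hypotheses hold trivially from Corollary~\ref{cor:projection-kinetic}) and decreases $\eta$ in steps small enough that $\tilde\theta(z)$ cannot jump; Lemma~\ref{lem:bootstrap} then shows at each step that $\tilde\theta$ stays $\lambda^{1/2-\delta}(\lambda^2\eta^{-1})^3$-close to the solution $\theta$ of the exact self-consistent equation, which by Lemma~\ref{lem:Freg} itself satisfies the input bounds. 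Within the lemma as stated, however, the only real content is the norm bookkeeping above, with the analytic substance borrowed from Lemma~\ref{lem:Rconc} and Theorem~\ref{thm:resolvent-apriori}.
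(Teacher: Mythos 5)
Your proposal is correct and tracks the paper's own argument almost line for line: start from the approximate self-consistent equation~\eqref{eq:appx-theta}, split the error $\mathfrak{e}$ into $\lambda^2\|(\Delta-(z+\lambda^2\tilde\theta))^{-1}\|_{1\to 2}\cdot\|\mcal{D}[R-\Expec R]\|_{2\to 2}\cdot\|R\|_{1\to 2}$, use the standing hypotheses $|\tilde\theta|\leq C$, $\Impt\tilde\theta\geq c$ together with the Ward identity for the first factor, Lemma~\ref{lem:Rconc} plus a union bound for the middle factor, and the a priori resolvent bounds for the last, then feed the result into Lemma~\ref{lem:theta-stability}. The only cosmetic deviation is your route to $\|R\|_{1\to 2}\ldom\lambda\eta^{-1}$ (Ward identity combined with the $\|R\|_{1\to\infty}$ bound from Theorem~\ref{thm:resolvent-apriori}, rather than the paper's $\|R^{1/2}\|_{1\to 2}\|R^{1/2}\|_{2\to 2}$ factorization), which is a perfectly valid and arguably cleaner derivation of the same estimate.
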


For $z = E+i\eta$ with $\eta \gg \lambda^2$, the estimate $|\tilde{\theta}-\theta|\leq \eps$ holds as a consequence of
Corollary~\eqref{cor:projection-kinetic}.  This estimate starts the ``bootstrap'' -- down to $\eta > \lambda^{2+\frac16-\delta}$,
the bounds in the output of Lemma~\ref{lem:bootstrap} are stronger than the input.  At this point a completely correct proof of
Theorem~\ref{thm:local-law} follows from a continuity argument, using the fact that $\theta(z)$ is a continuous function of $z$.

\subsection{The $T$ equation}

Theorem~\ref{thm:local-law} specifies the entries of the resolvent $R_{xy}$ up to $\lambda^c$ absolute error.  However,
according to the heuristic~\eqref{eq:greens-approx} we should have $|R_{xy}|^2\sim \lambda^2|x-y|^{2-d}$ for entries
$|x-y|\gg \lambda^{-2}$.  As a consequence, most of the $\ell^2$ mass of the rows of the resolvent is concentrated
far from the origin in entries for which the estimate~\eqref{eq:Rxy-conc} has large relative error.
In particular, Theorem~\ref{thm:local-law} is not immediately helpful to understand sums of the form
\[
O[f] := \sum_x f(x) |R_{0x}|^2
\]
when $f$ is spread out on the diffusive scale, which is what is required to prove Theorem~\ref{thm:resolvent-deloc}.

To address this we need to write down a self-consistent equation for the second moment of $R$.  Notice first that $F$ can be
written in the form
\[
O[f] = (R F R^*)_{00},
\]
where $F$ is the diagonal matrix with entries $f(x)$, that is, $F = \sum_x f(x)\ket{x}\bra{x}$.  We define
\[
\tilde{M} := (\Delta - (z+\lambda^2\tilde{\theta}))^{-1},
\]
and then use the resolvent identity
\[
R = \tilde{M} - \tilde{M} (\lambda V + \lambda^2\tilde{\theta}) R
\]
to write
\[
RFR^* = (\tilde{M} - \tilde{M} (\lambda V + \lambda^2\tilde{\theta}) R) F R^*.
\]
Now we take an expectation and perform Gaussian integration by parts:
\begin{equation}
\label{eq:RFReq}
\begin{split}
\Expec RFR^* &= \tilde{M} F (\Expec R^*)
+ \lambda^2 \tilde{M} \Expec \mcal{D}[R-\tilde{\theta}]RFR^*
+ \lambda^2 \tilde{M} \Expec \mcal{D}[RFR^*] R^* \\
&= \tilde{M}F \tilde{M}^* + \lambda^2 \tilde{M} F \tilde{M}^* + \mathfrak{E}_{\rm T},
\end{split}
\end{equation}
where $\mathfrak{E}_{\rm T}$ is the error:
\begin{equation}
\label{eq:Terrdef}
\begin{split}
\mathfrak{E}_{\rm T} = &\tilde{M} F (\Expec R^* - \tilde{M})
+ \lambda^2 \tilde{M} \Expec \mcal{D}[R-\Expec R] RFR^* \\
&\qquad + \lambda^2 \tilde{M} \Expec \mcal{D}[RFR^*] (R^* - \tilde{M}^*).
\end{split}
\end{equation}
\textit{Remark:}
The equation~\eqref{eq:RFReq} is closely related to what physicists call the Bethe-Salpeter equation in the theory of wave propagation in random media~\cite{vollhardt1980diagrammatic}.

Admittedly, the equation~\eqref{eq:RFReq} is a bit opaque.
To interpret it properly, it is helpful to consider only the diagonal entries.  That is, we define
\[
g(x) := \Expec (RFR^*)_{xx}.
\]
In the case that $F = \sum_x f(x)\ket{x}\bra{x}$, the diagonal entries of the first term are given by a convolution,
\[
(\tilde{M}F\tilde{M}^*)_{xx} = \sum_y f(y) |\tilde{M}_{xy}|^2 := (\tilde{K} \ast f)(x),
\]
with kernel
\[
\tilde{K}(x) := |\tilde{M}_{0x}|^2.
\]
Defining the function $\mathfrak{e}_T(x) := (\mathfrak{E}_T)_{xx}$, we therefore obtain the following equation for $g$ in terms of
$f$:
\[
g = \tilde{K}\ast f + \lambda^2 \tilde{K}\ast g + \mathfrak{e}_{\rm T}.
\]
Define $\mcal{K}$ to be the operator that is convolution by $\tilde{K}$, so $\mcal{K}f = \tilde{K}\ast f$.  Then
\[
g = (\Id - \lambda^2\mcal{K})^{-1} \mcal{K}f + (\Id-\lambda^2\mcal{K})^{-1} \mathfrak{e}_{\rm T}.
\]
To understand what is going on with the main term, note that we can (at least formally) expand $(\Id - \lambda^2\mcal{K})^{-1}$
as a Neumann series:
\[
(\Id - \lambda^2\mcal{K})^{-1} = \Id + \sum_{j=1}^\infty (\lambda^2\mcal{K})^j.
\]
In Section~\ref{sec:finishing-pf} we will analyze this series expansion and compare it to a Green's function from a random walk.

In the remainder of this section we prove the following result.
\begin{proposition}
\label{prp:RFR}
For a bounded diagonal operator $F = \sum_x f(x)\ket{x}\bra{x}$ and $\lambda^{2.1-\delta} < \eta < \lambda^{2+\delta}$,
we have
\begin{equation}
\label{eq:RFR-bd}
|(RFR^*)_{xx} - (\Id -\lambda^2\mcal{K})^{-1} f| \ldom \lambda^{\frac12} (\lambda^2\eta^{-1})^5 \eta^{-1} \|f\|_{\ell^\infty}.
\end{equation}
\end{proposition}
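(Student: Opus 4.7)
The plan is to decompose the bound into a deterministic analysis of the T-equation derived in~\eqref{eq:RFReq} together with a Gaussian--Poincar\'e concentration estimate for $(RFR^*)_{xx}$ around its mean. Writing $g(x) := \Expec(RFR^*)_{xx}$ and reading off the diagonal entries from~\eqref{eq:RFReq} yields
\[
(\Id - \lambda^2\mcal{K})\, g = \mcal{K}f + \mathfrak{e}_T,
\]
so that $g = (\Id - \lambda^2\mcal{K})^{-1}(\mcal{K}f + \mathfrak{e}_T)$. The problem then reduces to: (a) bounding the deterministic error $\mathfrak{e}_T$ in $\ell^\infty$, (b) inverting $\Id - \lambda^2\mcal{K}$ on $\ell^\infty$, and (c) showing that $(RFR^*)_{xx}$ concentrates sharply around $g(x)$.

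For (a) I would treat the three pieces of $\mathfrak{E}_T$ in~\eqref{eq:Terrdef} separately. The first piece $\tilde{M}F(\Expec R^* - \tilde{M}^*)$ is controlled by the local law (Theorem~\ref{thm:local-law}), and the diagonal matrix $\mcal{D}[R-\Expec R]$ appearing in the second piece is bounded in operator norm by the pointwise concentration of resolvent entries from Lemma~\ref{lem:Rconc}. The third, cross term is handled by combining the local-law control on $R^* - \tilde{M}^*$ with an a priori estimate on $RFR^*$. To convert matrix norms to the diagonal $\ell^\infty$ norm needed for $\mathfrak{e}_T$, I would sandwich with $\tilde{M}$ and use Cauchy--Schwarz together with the Ward identity $\|R\|_{1\to 2}^2 = \eta^{-1}\Impt R_{xx}$ and the $\ell^p\to\ell^q$ resolvent estimates of Theorem~\ref{thm:resolvent-apriori}. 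For (b), the key point is that $\mcal{K}$ is convolution by the positive kernel $\tilde{K}(x) = |\tilde{M}_{0x}|^2$; the Ward identity applied to $\tilde{M}$ with effective imaginary part $\eta_{\rm eff} = \eta + \lambda^2\Impt\tilde\theta$ gives total mass $\sum_x \tilde{K}(x) = \Impt\tilde{M}_{00}/\eta_{\rm eff}$, so
\[
1 - \lambda^2 \|\mcal{K}\|_{\ell^\infty\to\ell^\infty} = \frac{\eta}{\eta + \lambda^2\Impt\tilde\theta},
\]
yielding $\|(\Id - \lambda^2\mcal{K})^{-1}\|_{\ell^\infty\to\ell^\infty} \lsim \lambda^2/\eta$ in the regime $\eta \ll \lambda^2$. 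Applied to $\mathfrak{e}_T$ this produces the deterministic part of the target error. For (c) I would compute $\partial_{g_w}(RFR^*)_{xx} = -\lambda R_{xw}(FR^*)_{wx} - \lambda (RF)_{xw}R^*_{wx}$ via the resolvent identity, sum the squared gradient using Cauchy--Schwarz, bound it by $\|f\|_\infty^2$ times products of resolvent operator and $\ell^p\to\ell^q$ norms controlled by Theorem~\ref{thm:resolvent-apriori}, and then invoke the higher-moment Gaussian Poincar\'e of Lemma~\ref{lem:higher-GPI}.

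The main obstacle is step (a): the amplification factor $\lambda^2/\eta$ arising from inverting $\Id - \lambda^2\mcal{K}$ means that the $\ell^\infty$ estimate on $\mathfrak{e}_T$ must be very sharp. The second and third terms of $\mathfrak{E}_T$ each involve a product of three resolvent-like factors, and one must interpolate the a priori $\ell^p\to\ell^q$ bounds of Theorem~\ref{thm:resolvent-apriori} carefully to extract the required $\lambda^{1/2}(\lambda^2\eta^{-1})^4\|f\|_\infty$ before the amplification, so that after applying $(\Id - \lambda^2\mcal{K})^{-1}$ we reach the claimed $\lambda^{1/2}(\lambda^2\eta^{-1})^5\eta^{-1}\|f\|_\infty$. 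A secondary subtlety is ensuring that the concentration from step (c) is at least this sharp, which should follow from the same $\ell^p\to\ell^q$ bounds since the gradient computation has essentially the same structure as in~\eqref{eq:R-conc} but decorated with one extra factor carrying $\|f\|_\infty$.
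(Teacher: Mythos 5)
Your plan matches the paper's: reduce to the expectation via the T-equation, bound the error $\mathfrak{e}_T$ in $\ell^\infty$, invert $\Id - \lambda^2\mcal{K}$ on $\ell^\infty$ using the Ward identity and the self-consistent equation, and finish with Gaussian--Poincar\'e concentration (the paper splits the third term of $\mathfrak{E}_T$ further into a mean piece and a covariance piece, which your sketch folds into one, but this is organizational). Two slips would derail the power counting if you carried them through literally. First, your gradient formula is wrong: $\partial_{g_w}R = -\lambda R\ket{w}\bra{w}R$ gives $\partial_{g_w}(RFR^*)_{xx} = -\lambda R_{xw}(RFR^*)_{wx} - \lambda (RFR^*)_{xw}R^*_{wx}$, with a full $RFR^*$ in the second slot, not $(FR^*)_{wx}$ or $(RF)_{xw}$. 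The extra factor of $R$ matters: the paper arrives at $|\nabla(RFR^*)_{xx}|^2 \leq \lambda^2\|F\|_{2\to 2}^2\|R\|_{1\to 4}^2\|R\|_{1\to 2}^2\|R\|_{2\to 4}^2$, which has a higher power of $\lambda^2\eta^{-1}$ than your dropped-$R$ version would produce, so without the correction the concentration step looks spuriously strong. Second, the pre-amplification target for $\mathfrak{e}_T$ should be $\lambda^{1/2-\delta}(\lambda^2\eta^{-1})^4\eta^{-1}\|f\|_\infty$ (equivalently $\lambda^{1/2-\delta}(\lambda^2\eta^{-1})^5\lambda^{-2}\|f\|_\infty$), not $\lambda^{1/2}(\lambda^2\eta^{-1})^4\|f\|_\infty$; multiplying your stated bound by $\|(\Id-\lambda^2\mcal{K})^{-1}\|_{\infty\to\infty}\lsim\lambda^2\eta^{-1}$ is missing an $\eta^{-1}$ relative to the claim.

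Also note your $\ell^\infty$-operator-norm identity for $\mcal{K}$ uses $\Impt\tilde{M}_{00}=\Impt\tilde\theta$, which is only the approximate self-consistent equation; the paper tracks this discrepancy explicitly (it is where the $O(\lambda^{5/2-\delta}(\lambda^2\eta^{-1})^3)$ error enters the proof of Lemma~\ref{lem:inftytoinfty}), and it is why the proposition requires $\eta\gg\lambda^{2.1-\delta}$ rather than just $\eta\gg\lambda^{2+\frac16-\delta}$.
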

Note that, taking $F=\Id$, we have by the Ward identity
\[
(RR^*)_{00} = \sum_x |R_{0x}|^2 = \eta^{-1} \Impt R_{00},
\]
so the error term on the right hand side of~\eqref{eq:RFR-bd} is small compard to the scale of $(RFR^*)_{xx}$ for a
macroscopic-sized observable (meaning, for $f$ having support on a diffusive-sized ball $\lambda^{-1}\eta^{-1/2}$).

The second point we make is that to prove~\eqref{eq:RFR-bd} it suffices to prove, for every $\delta>0$, the estimate
\[
|\Expec (RFR^*)_{xx} - (\Id -\lambda^2\mcal{K})^{-1} f| \lsim \lambda^{\frac12-\delta}
(\lambda^2\eta^{-1})^5 \eta^{-1} \|f\|_{\ell^\infty}.
\]
The concentration inequality then follows from the Gaussian poincare inequality and applications of the a priori $\|R\|_{p\to q}$ estimates.

The error in the expectation has the form
\[
(\Id - \lambda^2\mcal{K})^{-1} \mathfrak{e}_{\rm T}.
\]
Proposition~\ref{prp:RFR} then follows from the following two estimates.

\begin{lemma}
\label{lem:inftytoinfty}
The operator $(\Id-\lambda^2\mcal{K})^{-1}$ is bounded from $\ell^\infty\to\ell^\infty$, and in particular
\[
\|(\Id - \lambda^2\mcal{K})^{-1}\|_{\infty\to\infty} \leq C\lambda^2 \eta^{-1}.
\]
\end{lemma}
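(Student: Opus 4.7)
The plan is to exploit the fact that $\mcal{K}$ is convolution with a nonnegative kernel, combined with a precise calculation of its $\ell^1$ mass via the Ward identity, to show that $\lambda^2\mcal{K}$ is a strict contraction on $\ell^\infty$ with spectral gap $\sim \eta/\lambda^2$.

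First, I would observe that since $\tilde{K}(x) = |\tilde{M}_{0x}|^2 \geq 0$ and $\mcal{K}$ acts by convolution with $\tilde{K}$, the $\ell^\infty \to \ell^\infty$ operator norm of $\mcal{K}$ is exactly $\|\tilde{K}\|_{\ell^1} = \sum_x |\tilde{M}_{0x}|^2 = (\tilde{M}\tilde{M}^*)_{00}$. Setting $\zeta := z + \lambda^2\tilde{\theta}$ so that $\tilde{M} = (\Delta - \zeta)^{-1}$, the Ward identity gives
\begin{equation*}
(\tilde{M}\tilde{M}^*)_{00} = \frac{\Impt \tilde{M}_{00}}{\Impt \zeta} = \frac{\Impt \tilde{M}_{00}}{\eta + \lambda^2 \Impt \tilde{\theta}}.
\end{equation*}

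Second, I would invoke Theorem~\ref{thm:local-law} together with Lemma~\ref{lem:theta-stability} to conclude that $\tilde{M}_{00} = F(\zeta)$ is well-approximated by $\tilde{\theta}$, and in particular that $\Impt \tilde{M}_{00} = \Impt \tilde{\theta} + O(\lambda^c)$. Since the energy $E$ is bounded away from $\Sigma_d$, Lemma~\ref{lem:Freg} combined with the fact that $\rho(E) > 0$ in the bulk yields $\Impt \tilde{\theta} \gtrsim 1$. Plugging this in,
\begin{equation*}
\lambda^2 \|\tilde{K}\|_{\ell^1} = \frac{\lambda^2 \Impt \tilde{\theta}}{\eta + \lambda^2 \Impt \tilde{\theta}} = 1 - \frac{\eta}{\eta + \lambda^2 \Impt \tilde{\theta}} \leq 1 - c \eta \lambda^{-2},
\end{equation*}
for some $c > 0$ depending on $\eps$, provided $\lambda$ is small enough.

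Third, with this strict contraction in hand, I would expand the inverse as a Neumann series:
\begin{equation*}
\|(\Id - \lambda^2 \mcal{K})^{-1}\|_{\infty\to\infty} \leq \sum_{j=0}^\infty (\lambda^2\|\tilde{K}\|_{\ell^1})^j \leq \sum_{j=0}^\infty (1 - c\eta\lambda^{-2})^j = \frac{\lambda^2}{c \eta},
\end{equation*}
which is the desired bound.

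The only mildly delicate step is verifying that $\Impt \tilde{\theta}$ really is bounded below by an $O(1)$ constant rather than merely being positive; this is where the hypothesis that $E$ is in the bulk and away from critical values $\Sigma_d$ is essential, so that $\rho(E) \gtrsim 1$ and the perturbation from $F(E)$ to $F(\zeta)$ (of size $O(\eta + \lambda^2\log\lambda^{-1})$) does not destroy positivity. Everything else is a direct computation using the Ward identity and geometric series.
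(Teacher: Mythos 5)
Your proof is correct and follows essentially the same route as the paper: reduce to $\ell^1$ mass of the nonnegative kernel $\tilde K$, use the Ward identity to write $\lambda^2\|\tilde K\|_{\ell^1}=\lambda^2\Impt\tilde M_{00}/(\eta+\lambda^2\Impt\tilde\theta)$, invoke the local law and self-consistent equation to replace $\Impt\tilde M_{00}$ by $\Impt\tilde\theta$ (equivalently $\Impt\theta$), conclude a spectral gap $\gtrsim\lambda^{-2}\eta$, and sum the Neumann series. The one place you are slightly loose is in writing $\lambda^2\|\tilde K\|_{\ell^1}=\lambda^2\Impt\tilde\theta/(\eta+\lambda^2\Impt\tilde\theta)$ as an exact equality after having acknowledged an $O(\lambda^c)$ error in the numerator: one must check that $\lambda^2$ times this error is actually small compared to $\eta$, which is where the threshold $\eta\gtrsim\lambda^{2.1-\delta}$ (hidden in the hypotheses of Proposition~\ref{prp:RFR}) is used; the paper makes this comparison explicit in the last step of its computation.
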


\begin{lemma}
\label{lem:ETbd}
The error $\mathfrak{E}_{\rm T}$ defined in~\eqref{eq:Terrdef} satisfies the estimate
\begin{equation}
\label{eq:ET-bd}
\|\mathfrak{E}_{\rm T}\|_{1\to \infty} =
\max_{x,y\in\bbZ^d_L}|(\mathfrak{E}_{\rm T})_{xy}| \lsim
[\lambda^{\frac12-\delta} (\lambda^2\eta{-1})^5] \lambda^{-2}
\|F\|_{2\to 2}.
\end{equation}
\end{lemma}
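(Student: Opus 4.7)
The plan is to bound each of the three summands of $\mathfrak{E}_{\rm T}$ in~\eqref{eq:Terrdef} separately, using three workhorse inputs: first, the Ward-identity bound $\|\tilde{M}\|_{1\to 2} \lesssim \lambda^{-1}$ (since $\Impt(z + \lambda^2 \tilde{\theta}) \gtrsim \lambda^2$) together with $\|R\|_{1\to 2} \ldom \eta^{-1/2}$ (combining Ward with the local law $\Impt R_{xx} \ldom 1$); second, the concentration estimate of Lemma~\ref{lem:Rconc}, which after a union bound over the $L^d$ lattice sites yields $\sup_x |R_{xx} - \Expec R_{xx}| \ldom \lambda^{1/2}(\lambda^2\eta^{-1})^2$ (the $L^d = \lambda^{-100 d}$ factor producing only a $\sqrt{\log \lambda^{-1}}$ loss, harmlessly absorbed into the $\lambda^{-\delta}$ slack); and third, the matrix self-consistent identity $\Expec R - \tilde{M} = \lambda^2 \tilde M\,\Expec \mcal{D}[R - \Expec R]\,R$, which follows from~\eqref{eq:general-ER-eq} upon noting that $\mcal{D}[\Expec R] = \tilde{\theta}\,\Id$ by translation invariance on the torus.

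The middle term $\lambda^2 \tilde M \Expec \mcal{D}[R - \Expec R] R F R^*$ sets the template. Writing an entry out and applying Cauchy--Schwarz twice yields
\[
|(\,\cdot\,)_{xy}| \leq \lambda^2 \|\tilde M\|_{1\to 2} \cdot \bigl(\Expec \sup_w |R_{ww} - \Expec R_{ww}|^2\bigr)^{1/2} \cdot \bigl(\Expec \|R F R^* e_y\|_2^2\bigr)^{1/2},
\]
and for the final factor I would estimate $\|R F R^* e_y\|_2 \leq \|R\|_{\rm op}\|F\|_{\rm op}\|R^* e_y\|_2 \ldom \eta^{-1}\|F\|_{\rm op}\eta^{-1/2}$ using Ward on $\|R^* e_y\|_2 = \sqrt{\eta^{-1}\Impt R_{yy}}$ and the local law. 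Multiplying gives a bound of order $\lambda^2(\lambda^2\eta^{-1})^2\eta^{-3/2}\|F\|_{\rm op}$, which is strictly stronger than the claimed $\lambda^{1/2-\delta}(\lambda^2\eta^{-1})^5\lambda^{-2}\|F\|_{2\to 2}$ throughout the regime $\eta \gg \lambda^2$.

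For the first term $\tilde M F (\Expec R^* - \tilde M^*)$, I would take the adjoint of the self-consistent identity to substitute $\Expec R^* - \tilde M^* = \lambda^2\,\Expec\bigl(R^*\mcal{D}[R^* - \Expec R^*]\tilde M^*\bigr)$, extracting the same concentration factor as in the middle term and adding only an extra $R^*$ and $\tilde M^*$ to absorb with Ward and with $\|\tilde M\|_{1\to 1} \lesssim \lambda^{-2}$; the resulting Cauchy--Schwarz bound meets the target (with essentially no slack to spare, which is the source of the sharp powers in the statement). For the third term $\lambda^2 \tilde M \Expec \mcal{D}[R F R^*](R^* - \tilde M^*)$, I would split $R^* - \tilde M^* = (R^* - \Expec R^*) + (\Expec R^* - \tilde M^*)$. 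The deterministic remainder is handled via the same self-consistent identity, contributing at quadratic order in the concentration error. For the random piece, I would rewrite it as $\Expec \mcal{D}[RFR^* - \Expec RFR^*](R^* - \Expec R^*)$ and control $(RFR^*)_{ww} - \Expec(RFR^*)_{ww}$ by a Gaussian Poincar\'e argument, computing $|\nabla(RFR^*)_{ww}|^2$ via the chain rule $\partial_{g_z}R = -\lambda R\,\ket{z}\bra{z}R$ and bounding the resulting sums using $\|R\|_{1\to\infty} \ldom \lambda^2\eta^{-1}$ from Theorem~\ref{thm:resolvent-apriori} together with Ward.

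The principal obstacle is that we need an $\ell^1\to\ell^\infty$ bound (a uniform estimate on matrix entries) rather than an operator norm bound: this rules out clean spectral-theoretic manipulations and demands careful tracking of where each $F$, $R$, and $\tilde M$ factor sits in every Cauchy--Schwarz split, along with the right instance of Ward for each resolvent factor exposed. A related delicate point is ensuring that the ambient volume $L^d = \lambda^{-100d}$ enters only logarithmically through the union bound for $\sup_w|R_{ww} - \Expec R_{ww}|$, and does not manifest as a polynomial loss that the $\lambda^{-\delta}$ slack of stochastic domination is too weak to absorb.
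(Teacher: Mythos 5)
Your plan follows the paper's proof quite closely: the paper also decomposes $\mathfrak{E}_{\rm T}$ into pieces, bounds the concentration factor $\sup_w|R_{ww}-\Expec R_{ww}|$ via the Gaussian Poincar\'e estimate of Lemma~\ref{lem:Rconc}, uses the self-consistent identity $\Expec R - \tilde M = \lambda^2\tilde M\,\Expec\mcal{D}[R-\Expec R]R$ (encapsulated in~\eqref{eq:MRonetwo}) for the deterministic pieces, and controls everything else with the a priori $\ell^p\to\ell^q$ resolvent estimates. Your split of the third summand into a deterministic piece and a centered random piece coincides, after recentering $\tilde M^*\mapsto\Expec R^*$, with the paper's decomposition into terms III and IV. Two small slips: $\|\tilde M\|_{1\to 1}\lsim\lambda^{-2}$ is not right (the $\ell^1\to\ell^1$ norm grows like $\lambda^{-(d+1)}$; what is true and likely intended is $\|\tilde M\|_{2\to 2}\lsim\lambda^{-2}$ or $\|\tilde M\|_{1\to 2}\lsim\lambda^{-1}$), and the remark about term II being stronger than needed ``throughout the regime $\eta\gg\lambda^2$'' should read $\eta\lsim\lambda^2$, which is the range of Proposition~\ref{prp:RFR}.

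The one place where your toolkit appears genuinely insufficient is the Poincar\'e estimate for the centered random piece (the paper's term $\rm{IV}$). You propose to control $\nabla(RFR^*)_{ww}$ using $\|R\|_{1\to\infty}\ldom\lambda^2\eta^{-1}$ together with Ward. Interpolating the two endpoints $\|R\|_{1\to 2}\ldom\eta^{-1/2}$ and $\|R\|_{1\to\infty}\ldom\lambda^2\eta^{-1}$ only yields $\|R\|_{1\to 4}\ldom\lambda\eta^{-3/4}$. Carrying this through the Cauchy--Schwarz split for $|\nabla(RFR^*)_{ww}|^2$ gives $\sup_w|(RFR^*)_{ww}-\Expec(\,\cdot\,)|\ldom\lambda^{-2}(\lambda^2\eta^{-1})^2\|F\|$, and plugging into IV produces a bound of order $\lambda^{-2}(\lambda^2\eta^{-1})^{5/2}\|F\|$, which is \emph{larger} than the claimed $\lambda^{-3/2-\delta}(\lambda^2\eta^{-1})^5\|F\|$ throughout the regime $\eta > \lambda^{2.1-\delta}$ (where $\lambda^2\eta^{-1} < \lambda^{-0.1}$, so one would need $\lambda^2\eta^{-1}\gtrsim\lambda^{-0.2}$ to close the gap). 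The paper avoids this loss by instead interpolating between $\|R\|_{1\to 2}$ and the Tomas--Stein input $\|R\|_{1\to 6}\ldom\lambda^2\eta^{-1}$ of Theorem~\ref{thm:resolvent-apriori}, which gives the strictly sharper $\|R\|_{1\to 4}\ldom\lambda^{7/4}\eta^{-1}$; the restriction-type $q=6$ endpoint (not $q=\infty$) is essential here because it encodes the curvature of the energy shell. Since the final application in Proposition~\ref{prp:RFR} is right on the edge at $\eta\sim\lambda^{2.1}$, the power $5$ in the lemma statement has essentially no slack and this loss is not cosmetic. You should replace the ``$\|R\|_{1\to\infty}$ and Ward'' step with the $\ell^1\to\ell^4$ and $\ell^2\to\ell^4$ estimates derived (as in the display following~\eqref{eq:R-conc}) from the $\ell^2\to\ell^6$ bound.
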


First we prove Lemma~\ref{lem:inftytoinfty}.
\begin{proof}[Proof of Lemma~\ref{lem:inftytoinfty}]
We estimate $\sum_x \tilde{K}(x)$ using the Ward identity and the self-consistent equation~\eqref{eq:theta-sce2} satisfied by $\theta$:
\begin{align*}
\lambda^2\sum_x \tilde{K}(x)
&= \lambda^2\sum_x |(\Delta-(z+\lambda^2\tilde{\theta}))^{-1}_{0x}|^2\\
&= \frac{\lambda^2\Impt (\Delta - (z+\lambda^2\tilde{\theta}))^{-1}_{00}}{\lambda^2\Impt\tilde{\theta}+\eta} \\
&= \frac{\lambda^2 \Impt (\Delta - (z+\lambda^2\theta))^{-1}_{00} + O(\lambda^{\frac52-\delta}(\lambda^2\eta^{-1})^3)}
{\lambda^2\Impt\tilde{\theta}+\eta} \\
&= \frac{\lambda^2 \Impt \theta + O(\lambda^{\frac52-\delta}(\lambda^2\eta^{-1})^3)}
{\lambda^2\Impt\theta + \eta + O(\lambda^{\frac52-\delta}(\lambda^2\eta^{-1})^3)} \\
&= 1 - (\Impt\theta)^{-1}\lambda^{-2}\eta + O(\lambda^{\frac12-\delta}(\lambda^2\eta)^5 \lambda^{-2}\eta)
\end{align*}
For $\eta \gg \lambda^{2.1-\delta}$, the last term is small compared to $\lambda^{-2}\eta$.
Therefore,
\[
\|\lambda^2\mcal{K}\|_{\infty\to\infty} \leq 1 - c\lambda^{-2}\eta.
\]
The result now follows for example by summing the Neumann series for $(\Id-\lambda^2\mcal{K})^{-1}$.
\end{proof}

The proof of Lemma~\ref{lem:ETbd} requires only slightly more calculation.
First we need the following lemma.
\begin{lemma}
For $d(E,\Sigma_d)>\eps$, $\eta>\lambda^{2+\frac16-\delta}$
\begin{equation}
\label{eq:MRonetwo}
\|\tilde{M} - \Expec R\|_{1\to 2} \lsim \lambda^{-\frac12-\delta} (\lambda^2\eta^{-1})^3.
\end{equation}
\end{lemma}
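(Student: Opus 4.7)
The plan is to derive an exact identity for $\Expec R - \tilde{M}$ via the resolvent identity and Gaussian integration by parts, and then bound it in $\ell^1\to\ell^2$ operator norm using the concentration estimates and the Ward identity already developed earlier.

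\textbf{Step 1 (the centered identity).} Starting from $R = \tilde{M} - \tilde{M}(\lambda V + \lambda^2\tilde{\theta})R$, I take expectations. Since $\partial_{g_w} R = -\lambda R\ket{w}\bra{w}R$, Gaussian integration by parts yields $\Expec[\lambda V R] = -\lambda^2 \Expec[\mcal{D}[R]R]$. Using $\tilde{\theta} = \Expec R_{00}$ together with translation invariance on $\bbZ_L^d$, which forces $\mcal{D}[\Expec R] = \tilde{\theta}\,\Id$, the deterministic self-energy term $\lambda^2\tilde{\theta}\,\tilde{M}\,\Expec R$ cancels exactly with the averaged diagonal contribution, leaving
\[
\Expec R - \tilde{M} \;=\; \lambda^2\,\tilde{M}\,\Expec\bigl[\mcal{D}[R-\Expec R]\,R\bigr].
\]

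\textbf{Step 2 (the factors).} I would bound the right side in $\ell^1\to\ell^2$ by $\lambda^2\|\tilde{M}\|_{2\to 2}\,\|\Expec \mcal{D}[R-\Expec R]R\|_{1\to 2}$. For the first factor, Theorem~\ref{thm:local-law} gives $\Impt\tilde{\theta}\gtrsim 1$, so $\|\tilde{M}\|_{2\to 2}\leq(\eta+\lambda^2\Impt\tilde{\theta})^{-1}\lsim \min(\lambda^{-2},\eta^{-1})$. Since $\mcal{D}[R-\Expec R]$ is diagonal, its operator norm equals $\max_x|R_{xx}-\Expec R_{xx}|$, which is $\ldom \lambda^{1/2}(\lambda^2\eta^{-1})^2$ by Lemma~\ref{lem:Rconc} and a union bound over the $L^d$ sites. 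Finally, by the Ward identity $\|R\|_{1\to 2}^2 = \eta^{-1}\max_x\Impt R_{xx}$, and since $|R_{xx}|\ldom 1$ from the local law, $\|R\|_{1\to 2}\ldom\eta^{-1/2}$.

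\textbf{Step 3 (assembly).} Multiplying, $\|\mcal{D}[R-\Expec R]R\|_{1\to 2}\ldom \lambda^{1/2}(\lambda^2\eta^{-1})^2\eta^{-1/2}$, while the prefactor satisfies $\lambda^2\|\tilde{M}\|_{2\to 2}\lsim \min(1,\lambda^2\eta^{-1})$. Comparing with the target $\lambda^{-1/2-\delta}(\lambda^2\eta^{-1})^3$ is a short arithmetic check in two cases. For $\eta\leq\lambda^2$, the prefactor is $\lsim 1$ and the required inequality reduces to $\eta^{1/2}\lsim\lambda$, which is the case hypothesis. For $\eta>\lambda^2$, one uses the sharper $\lambda^2\|\tilde{M}\|_{2\to 2}\lsim\lambda^2\eta^{-1}$, and the inequality reduces to $\lambda\eta^{1/2}\lsim 1$, trivially true.

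\textbf{Main obstacle.} The only nontrivial technical point is passing from the $\ldom$ (high-probability) control on $\|\mcal{D}[R-\Expec R]R\|_{1\to 2}$ to a deterministic bound on $\Expec[\mcal{D}[R-\Expec R]R]$ appearing in the identity. This is handled by pairing the high-probability bound with the deterministic fallback estimates $\|\mcal{D}[R-\Expec R]\|_{2\to 2}\leq 2\eta^{-1}$ and $\|R\|_{1\to 2}\leq \eta^{-1/2}L^{d/2}$ on the complement: since this bad event has probability $\leq C_N\lambda^N$ for arbitrarily large $N$, its contribution to the expectation is negligible relative to the main bound for $L=\lambda^{-100}$.
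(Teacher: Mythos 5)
Your proof is correct and follows the same strategy as the paper: the identity $\Expec R-\tilde{M}=\lambda^2\tilde{M}\,\Expec\mcal{D}[R-\Expec R]R$ from GIBP and the choice $\tilde{\theta}=\Expec R_{00}$, followed by an operator-norm estimate using Lemma~\ref{lem:Rconc}. Your Step~1 re-derivation of the exact cancellation is a nice sanity check and matches what the paper assumes silently.

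The one genuine difference is the factorization. You bound
\[
\|\tilde{M}\,\Expec\mcal{D}[R-\Expec R]R\|_{1\to 2}\le \|\tilde{M}\|_{2\to 2}\,\Expec\bigl[\|\mcal{D}[R-\Expec R]\|_{2\to 2}\,\|R\|_{1\to 2}\bigr],
\]
using $\|\tilde{M}\|_{2\to 2}\lsim\lambda^{-2}$ (pure spectral theorem) and $\|R\|_{1\to 2}\ldom\eta^{-1/2}$ (Ward identity for $R$ plus $|R_{xx}|\ldom 1$ from the local law). The paper instead writes $\|\tilde{M}\|_{1\to 2}\|\mcal{D}[R-\Expec R]\|_{2\to2}\|R\|_{2\to 2}$, i.e.\ it spends the Ward identity on $\tilde{M}$ (giving $\lambda^{-1}$) and pays the trivial $\eta^{-1}$ for $R$. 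As written, the paper's triple product is not a literal H\"older factorization for the composition $\tilde{M}\circ\mcal{D}[\cdot]\circ R$ (the intermediate spaces don't match), whereas yours is the correct pattern $\ell^1\to\ell^2\to\ell^2\to\ell^2$; the two choices land on the same power of $\lambda$ and $\eta$, so the paper's is best read as bookkeeping shorthand rather than a different bound. Your version is in fact marginally sharper throughout $\eta\le\lambda^2$. Your explicit handling of the $\ldom$-to-expectation passage via the deterministic fallback (with $\|\mcal{D}[R-\Expec R]\|_{2\to 2}\le 2\eta^{-1}$ and $\|R\|_{1\to 2}\le\eta^{-1}$; your $\eta^{-1/2}L^{d/2}$ is weaker than needed but still works) is a real technical point that the paper glosses over; good that you noticed it.
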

\begin{proof}
Recall that using the resolvent identity for $R$ and taking expectations we have
\[
\Expec R = \tilde{M} + \lambda^2 \tilde{M}\Expec \mcal{D}[R - \Expec R] R.
\]
The result follows upon using Lemma~\ref{lem:Rconc} and the estimate $\|\tilde{M}\|_{1\to 2} \lsim \lambda^{-1}$,
\begin{align*}
\|\Expec R - \tilde{M}\|_{1\to 2}
&\leq \lambda^2 \|\tilde{M}\|_{1\to 2}
\Expec \|\mcal{D}[R-\Expec R]\|_{2\to 2} \|R\|_{2\to 2}\\
&\lsim \lambda^{-\frac12-\delta} (\lambda^2\eta^{-1})^3.
\end{align*}
\end{proof}

We can now bound $\mathfrak{E}_{\rm T}$.
\begin{proof}[Proof of Lemma~\ref{lem:ETbd}]
We write
\[
\mathfrak{E}_{\rm T} =: \rm{I} + \rm{II} + \rm{III} + \rm{IV},
\]
where
\begin{align*}
\rm{I} &= \tilde{M} F (\Expec R^* - \tilde{M}) \\
\rm{II} &= \lambda^2 \tilde{M} \Expec \mcal{D}[R-\Expec R] RFR^* \\
\rm{III} &= \lambda^2 \tilde{M} (\Expec \mcal{D}[RFR^*]) \Expec(R^* - \tilde{M}^*) \\
\rm{IV} &= \lambda^2 \tilde{M} \Expec (\mcal{D}[RFR^*]-\Expec \mcal{D}[RFR^*]) (R^* - \tilde{M}^*).
\end{align*}
The first term is bounded using~\eqref{eq:MRonetwo},
\begin{align*}
\|\rm{I}\|_{1\to\infty}
&\leq \|\tilde{M}\|_{1\to 2} \|F\|_{2\to 2} \|\Expec R^* -\tilde{M}\|_{1\to 2} \\
&\lsim \lambda^{-\frac32-\delta} \|F\|_{2\to 2}  (\lambda^2\eta^{-1})^3.
\end{align*}
For the second term use Theorem~\ref{thm:local-law},
\begin{align*}
\|\rm{II}\|_{1\to\infty} &\leq
\lambda^2 \|\tilde{M}\|_{1\to 2} \Expec \|\mcal{D}[R-\Expec R]\|_{2\to 2} \|R\|_{2\to 2} \|F\|_{2\to 2} \|R^*\|_{1\to 2} \\
&\lsim \lambda (\lambda^{\frac12-\delta})(\lambda^2\eta^{-1})^2 \eta^{-1} \|F\|_{2\to 2} (\lambda \eta^{-1}) \\
&\lsim \lambda^{-\frac32-\delta} (\lambda^2\eta^{-1})^4  \|F\|_{2\to 2}.
\end{align*}
For the third term we use $\|\mcal{D}[RFR^*]\|_{2\to 2} \leq \|R\|_{1\to2}^2 \|F\|_{2\to2}$
as well as~\eqref{eq:MRonetwo}:
\begin{align*}
\|\rm{III}\|_{1\to\infty} &\leq \lambda^2 \|\tilde{M}\|_{1\to 2}
\Expec \|\mcal{D}[RFR^*]\|_{2\to 2} \|\Expec R-\tilde{M}\|_{1\to 2} \\
&\leq \lambda^2 \|\tilde{M}\|_{1\to 2}
\Expec \|R\|_{1\to 2}^2  \|F\|_{2\to 2} \|\Expec R-\tilde{M}\|_{1\to 2} \\
&\lsim \lambda (\lambda\eta^{-1})^2 \|F\|_{2\to 2} \lambda^{-\frac12-\delta}(\lambda^2\eta^{-1})^3 \\
&= \lambda^{-\frac32-\delta} (\lambda^2\eta^{-1})^5 \|F\|_{2\to 2}.
\end{align*}
For the fourth term we need an estimate for $\|\mcal{D}[RFR^*]-\Expec\mcal{D}[RFR^*]\|$, which is the maximum size of the diagonal fluctuations,
\[
\|\mcal{D}[RFR^*]-\Expec\mcal{D}[RFR^*]\| =
\sup_x |(RFR^*)_{xx} - \Expec (RFR^*)_{xx}|.
\]
A similar calculation as in~\eqref{eq:R-conc} shows that\footnote{Using the bounds for $\|R^{1/2}\|_{1\to 2}$,
$\|R^{1/2}\|_{2\to 2}$, and $\|R^{1/2}\|_{2\to 6}$ and interpolating as before.  This calculation has been omitted but is largely similar to the calculation for $\nabla R_{xx}$}
\begin{align*}
|\nabla (RFR^*)_{xx}|^2 &\leq \lambda^2 \|F\|_{2\to 2}^2 \|R\|_{1\to 4}^2 \|R\|_{1\to 2}^2\|R\|_{2\to 4}^2 \\
&\ldom \lambda^{-3} (\lambda^2\eta^{-1})^6
\end{align*}
Therefore,
\[
\|\mcal{D}[RFR^*] - \Expec \mcal{D}[RFR^*]\|_{2\to 2} \ldom \lambda^{-3/2} \|F\|_{2\to 2} (\lambda^2\eta^{-1})^3.
\]
Combined with the estimate $\|R\|_{1\to 2} \ldom \lambda \eta^{-1}$ we have
\begin{align*}
\rm{IV}
&\lsim \lambda^{-\delta} \lambda^2 \|\tilde{M}\|_{1\to 2} (\lambda^{-3/2}(\lambda^2\eta^{-1})^3)
(\lambda\eta^{-1}) \|F\|_{2\to 2} \\
&\lsim \lambda^{-\frac32-\delta} (\lambda^2\eta^{-1})^4 \|F\|_{2\to 2}
\end{align*}
which is the same scale as the bound for $\rm{II}$.

Rearranging the terms yields the estimate~\eqref{eq:ET-bd}.
\end{proof}

\subsection{Finishing the proof of Theorem~\ref{thm:resolvent-deloc}}
\label{sec:finishing-pf}

In this section we complete the proof of Theorem~\ref{thm:resolvent-deloc}, which states that the following bound holds
with high probability:
\[
\sum_{|x|>c\lambda\eta^{-1/2}} |R_{0x}(E+i\eta)|^2 \geq c \eta^{-1}.
\]
By the Ward identity, we have that
\[
\sum_x |R_{0x}|^2 \gtrsim \eta^{-1},
\]
so it suffices to show that for any $c_0$ we can find $c_1$ such that
\[
\sum_{|x|<c_1\lambda\eta^{-1/2}} |R_{0x}(E+i\eta)|^2 \leq c_0 \eta^{-1}
\]
holds with high probability.
By Proposition~\ref{prp:RFR} it then suffices to show that there exists $c_1$ such that
\begin{equation}
\label{eq:RW-bd}
((\Id - \lambda^2\mcal{K})^{-1} \mcal{K} \One_{\{|x|<c_1\lambda\eta^{-1/2}\}})(0) < \frac12 c_0 \eta^{-1}.
\end{equation}

We need more information about $\mcal{K}$ to estimate $(\Id-\lambda^2\mcal{K})^{-1}$ well enough to compute the main term.  In the paper~\cite{black2025self}, we think of
$(\Id - \lambda^2\mcal{K})^{-1}$ as an elliptic operator.  Equivalently, by the Neumann
series expansion we can think of it as the Green's function of a random walk.  In either
case, we need to estimate moments of $\lambda^2\mcal{K}$.
\begin{lemma}
\label{lem:K-moments}
The convolution kernel $\tilde{K}$ satisfies the following estimates:
\begin{align}
\label{eq:Kmass}
&\lambda^2 \sum \tilde{K}(x) \leq 1 - c\lambda^{-2}\eta \\
\label{eq:Kone}
&\lambda^2 \sum x_j\tilde{K}(x) = 0\\
\label{eq:Ktwo}
&\lambda^2 \sum |x|^2 \tilde{K}(x) \gtrsim \lambda^{-4} \\
\label{eq:Kfour}
&\lambda^2 \sum |x|^4 \tilde{K}(x) \lsim \lambda^{-8}.
\end{align}
\end{lemma}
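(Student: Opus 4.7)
All four estimates are deterministic statements about the translation-invariant kernel $\tilde K(x) = |\tilde M_{0x}|^2$, where $\tilde M = (\Delta - w)^{-1}$ and $w := z + \lambda^2 \tilde\theta$. Since $\tilde M$ is a Fourier multiplier with symbol $\hat m(\xi) := (\omega(\xi) - w)^{-1}$, Plancherel on $\Torus^d$ converts each weighted sum $\sum_x p(x)\tilde K(x)$ into an integral of $|\partial^\alpha \hat m|^2$ over $\Torus^d$. Throughout the relevant range $\lambda^{2.1-\delta} < \eta < \lambda^{2+\delta}$, Theorem~\ref{thm:local-law} together with the lower bound $\Impt \theta \gtrsim_\eps 1$ gives $\Impt w = \eta + \lambda^2 \Impt\tilde\theta \sim \lambda^2$, which plays the role of an ``effective regularization'' $\tilde\eta$ in all the computations below.

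Estimate (1) requires no new work: the proof of Lemma~\ref{lem:inftytoinfty} already established $\lambda^2 \sum_x \tilde K(x) = 1 - (\Impt\theta)^{-1}\lambda^{-2}\eta + o(\lambda^{-2}\eta)$, which yields the inequality with any $c$ strictly smaller than $(\Impt\theta)^{-1}$. Estimate (2) follows from the symmetry $\omega(\xi) = \omega(-\xi)$ of the lattice dispersion relation, which forces $\hat m(\xi) = \hat m(-\xi)$, hence $\tilde M_{0x} = \tilde M_{0,-x}$, hence $\tilde K$ is even, and its first moments vanish.

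For (3) and (4) I would integrate by parts in $\xi$: since multiplication by $x_j$ on the physical side corresponds to $i\partial_{\xi_j}$ on the Fourier side, Plancherel gives
\[
\sum_x |x|^2\, \tilde K(x) = \int_{\Torus^d} |\nabla_\xi \hat m|^2\, d\xi = \int_{\Torus^d} \frac{|\nabla \omega(\xi)|^2}{|\omega(\xi) - w|^4}\, d\xi,
\]
with an analogous expression for the fourth moment whose integrand is bounded by a combination of $|\nabla\omega|^4/|\omega-w|^6$ and $|\mathrm{Hess}\,\omega|^2/|\omega-w|^4$. I would then apply the coarea formula to reduce each of these to one-dimensional integrals of the form $\int G_k(E')\,dE' / |E' - w|^{2k}$, where $G_k(E') := \int_{\{\omega = E'\}} |\nabla\omega|^{2k-1}\, d\mcal{H}^{d-1}$. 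Since $d(E,\Sigma_d) > \eps$ and $|\Rept w - E| \lsim \lambda^2 \log\lambda^{-1}$, the weights $G_k$ are smooth and bounded above and below on a neighborhood of $\Rept w$ of scale much larger than $\tilde\eta$, so the remaining one-dimensional integrals evaluate to constants times $\tilde\eta^{1-2k}$. This yields $\sum_x |x|^2 \tilde K \sim \tilde\eta^{-3} \sim \lambda^{-6}$ and $\sum_x |x|^4 \tilde K \lsim \tilde\eta^{-5} \sim \lambda^{-10}$, which give (3) and (4) after multiplying through by $\lambda^2$.

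The only substantive input beyond bookkeeping is the nondegeneracy $|\nabla\omega| \gtrsim_\eps 1$ near $\{\omega = E\}$, used crucially to get a \emph{lower} bound in (3); this is precisely where the hypothesis $d(E,\Sigma_d) > \eps$ enters. I do not anticipate a real obstacle: the probabilistic content enters only through the replacement of $\tilde\theta$ by $\theta$, whose error is controlled by Theorem~\ref{thm:local-law} and is far too small to move $\tilde\eta$ off the scale $\lambda^2$.
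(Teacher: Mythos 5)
Your proposal is correct and follows the paper's proof essentially step for step: cite the calculation preceding Lemma~\ref{lem:inftytoinfty} for the total mass, use evenness of $\omega$ for the vanishing first moment, and convert the higher moments to $\xi$-integrals via Plancherel and the coarea formula, with the nondegeneracy $|\nabla\omega|\gtrsim_\eps 1$ away from $\Sigma_d$ supplying the lower bound in~\eqref{eq:Ktwo}. You are a bit more explicit than the paper about the source of $\Impt w \sim \lambda^2$ (namely $\Impt\theta\gtrsim_\eps 1$ together with the local law) and about the structure of the second-derivative term in~\eqref{eq:Kfour}, but there is no difference in method.
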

\begin{proof}
The first bound~\eqref{eq:Kmass} was proven in the calculation above Lemma~\ref{lem:inftytoinfty}.
The identity~\eqref{eq:Kone} follows from the fact that $\tilde{K}$ is symmetric about the origin.

Now we prove~\eqref{eq:Ktwo}.
\[
(\Delta-z)^{-1}_{0x} = \int_{\Torus^d} e^{i\xi\cdot x} \frac{1}{\omega(\xi)-z} \diff \xi.
\]
For simplicity we set $w = z+\lambda^2\tilde{\theta}$ below.  Then by Plancherel's formula and then the coarea formula we have
\begin{align*}
\lambda^2 \sum |x|^2 \tilde{K}(x)
&= \lambda^2 \sum |x (\Delta-w)^{-1}_{0x}|^2  \\
&= \lambda^2 \int_{\Torus^d} \Big|\nabla (\omega(\xi)-w)^{-1}\Big|^2 \diff \xi \\
&= \lambda^2 \int_{\Torus^d} \frac{|\nabla \omega|^2}{|\omega(\xi)-w|^4}\diff \xi \\
&= \lambda^2 \int \frac{1}{|E'-w|^4} \Big(\int_{\{\omega(\xi)=E'\}} |\nabla \omega|\diff\mcal{H}^{d-1}\Big)\diff E'
\end{align*}
The function in brackets is positive, and $\int |E'-w|^{-4}\diff E \simeq \lambda^{-6}$, so this completes the proof
of~\eqref{eq:Ktwo}.

The proof of~\eqref{eq:Kfour} proceeds along similar lines (one uses Plancherel's theorem to produce a second derivative, otherwise
the use of the coarea formula is also similar).
\end{proof}

The main theorem now follows from the following elementary anticoncentration lemma for random walks.
\begin{lemma}
\label{lem:anticonc}
Let $X\in\Real^d$ be a random variable satisfying $\Expec X = 0$,
$\Expec X_iX_j = \sigma^2 \delta_{ij}$, and $\Expec |X|^3 \leq C \sigma^{3}$.  Let
$Y_N := \sum_{j=1}^N X^{(j)}$  be a sum of $N$ independent copies of $X$.  Then $Y_N$ satisfies,
for any $y\in\Real^d$,
\[
\Prob(|Y_N - y| \leq \sigma) \leq CN^{-d/2}.
\]
In particular, for $r\geq \sigma$ it follows that
\[
\Prob(|Y_N - y| \leq r) \leq CN^{-d/2} (r/\sigma)^d
\]
\end{lemma}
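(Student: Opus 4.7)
The bound is a local central limit theorem statement, and the plan is to derive it by Fourier inversion applied to the characteristic function of the step distribution. First I would reduce to $\sigma = 1$ by rescaling $X \mapsto X/\sigma$. The second assertion (radius $r \geq \sigma$) then follows from the first by covering $B_r(y)$ with $O((r/\sigma)^d)$ translates of $B_\sigma$ and applying the $r = \sigma$ case to each. So the core task is $\Prob(|Y_N - y| \leq 1) \leq C N^{-d/2}$.

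Next, fix a smooth nonnegative $\psi \in C_c^\infty(\Real^d)$ with $\psi \geq \One_{B_1(0)}$ and $\widehat{\psi}$ rapidly decaying. By Fourier inversion,
\[
\Prob(|Y_N - y| \leq 1) \leq \Expec \psi(Y_N - y) = (2\pi)^{-d}\int_{\Real^d} \widehat{\psi}(\xi)\, \phi(\xi)^N\, e^{-iy\cdot \xi}\, d\xi,
\]
where $\phi(\xi) := \Expec e^{i\xi \cdot X}$ is the characteristic function of a single step. The three moment hypotheses give the Taylor expansion $\phi(\xi) = 1 - \tfrac12|\xi|^2 + O(|\xi|^3)$, so there exists $\delta > 0$ with $|\phi(\xi)|^N \leq e^{-cN|\xi|^2}$ for $|\xi| \leq \delta$. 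Integrating over this region against $\widehat{\psi}$ produces the desired main term via a Gaussian integral,
\[
\int_{|\xi|\leq \delta} |\widehat{\psi}(\xi)|\, e^{-cN|\xi|^2}\, d\xi \lsim \|\widehat{\psi}\|_\infty N^{-d/2}.
\]

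For the complementary range $|\xi| > \delta$ one needs $|\phi(\xi)| \leq 1 - c$ so that $|\phi(\xi)|^N$ is exponentially negligible. The isotropy $\Expec X_iX_j = \delta_{ij}$ rules out concentration of $X$ on a proper subspace, which in the intended application (a lattice walk driven by $\lambda^2 \mcal{K}$ on $\bbZ^d_L$, where Fourier variables naturally live on the compact dual torus) yields such a uniform bound after restricting to a fundamental domain. The main obstacle is precisely this large-$\xi$ step: the moment hypotheses alone do not preclude secondary modes $|\phi(\xi)| = 1$, so the rigorous argument needs either the lattice structure of $X$ or a mild regularity assumption (so that Riemann–Lebesgue applies). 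Granted this bound, the two ranges combine to $\Prob(|Y_N - y| \leq 1) \lsim N^{-d/2}$, and the reductions above finish the proof.
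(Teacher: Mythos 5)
You have the right skeleton---reduction to $\sigma=1$, covering argument for $r\geq\sigma$, Fourier inversion against a smooth majorant of $\One_{B_1}$, Taylor expansion of the characteristic function near zero---and you correctly identify that your argument, as written, has a hole in the region $|\xi|>\delta$: the moment hypotheses alone do not rule out $|\phi(\xi)|=1$ at nonzero frequencies (e.g.\ lattice-valued $X$). But this hole is an artifact of your choice of test function, not of the lemma. You took $\psi\in C_c^\infty$ (so $\widehat\psi$ only decays, and the large-$\xi$ region is genuinely present in the Plancherel pairing). The paper makes the opposite choice: it selects a majorant $\psi$ with $\One_{|x|\leq 1}\leq\psi$ and $\widehat\psi$ \emph{compactly supported}, with $|\widehat\psi|\leq\One_{|\xi|\leq c_0}$. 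Then
\[
\Prob(|Y_N-y|\leq 1)\leq \Expec\,\psi(Y_N-y)= \int e^{-iy\cdot\xi}\,\widehat\psi(\xi)\,f_{Y_N}(\xi)\,d\xi \leq C\int_{|\xi|\leq c_0}|f_X(\xi)|^N\,d\xi,
\]
and the problematic region simply never enters: one only ever needs the small-ball bound $|f_X(\xi)|\leq 1-\tfrac13|\xi|^2$ that you already derived from the moment hypotheses. Such band-limited majorants do exist---for instance, a normalized tensor product of one-dimensional Fej\'er-type kernels $\bigl(\sin(ax_j)/x_j\bigr)^2$, which are nonnegative with compactly supported Fourier transform, rescaled so the product dominates $1$ on $B_1$. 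This is the standard trick for anticoncentration/local-limit estimates when one wants to avoid any hypothesis beyond a few moments (no lattice structure, no Cram\'er condition), and it is precisely the missing ingredient in your argument.
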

\begin{proof}
We give a proof via characteristic functions. For any random variable $Z$
define its characteristic function $f_{Z}:\mathbb{R}^d\to \mathbb{C}$
by $f_{Z}(\xi) := \mathbb{E}e^{i\langle Z,\xi\rangle}$.

By scaling it suffices to prove the result for $\sigma = 1$. First we estimate $f_{X}$ near $0$.
By the moment assumptions on $X$ and a Taylor expansion we can estimate
\begin{align*}
|f_X(\xi)|
& \leq  |\mathbb{E}(1+i\langle X, \xi\rangle - \frac{1}{2}\langle X, \xi\rangle^2 + C|\langle X, \xi\rangle|^3)|\\
& \leq 1- \frac{1}{2}|\xi|^2 + C|\xi|^3\mathbb{E}|X|^3\\
& \leq 1- \frac{1}{2}|\xi|^2 + C|\xi|^3\\
\end{align*}
Hence for $c_0>0$ small,  we have
$$|f_{X}(\xi)|\leq 1- \frac{1}{3}|\xi|^2,$$
for $|\xi|\leq c_0$. Now let  $\psi:\mathbb{R}^d\to \mathbb{R}$
satisfy $1_{|x|\leq 1}\leq \psi$ and $|\hat{\phi}|\leq 1_{|\xi|\leq c_{0}}$,
so that by Plancherel, and the estimate above we get
\begin{align*}
\mathbb{P}(|Y_N - y|\leq 1)
& \leq \mathbb{E}\psi(X)\\
& \leq \int_{\mathbb{R}^d}e^{iy\cdot \xi}\hat{\psi}(\xi)f_{Y_N}(\xi)d\xi\\
& \leq C \int_{|\xi|\leq c_0}|f_{Y_N}(\xi)|d\xi\\
& \leq C\int_{|\xi|\leq c_0}|f_{X}(\xi)|^N\\
& \leq C\int_{|\xi|\leq c_0} (1- \frac{|\xi|^2}{3})^{N}d\xi\\
& \leq CN^{-d/2}
\end{align*}
\end{proof}

We are now ready to establish~\eqref{eq:RW-bd}, and thereby prove Theorem~\ref{thm:main-thm}.
In fact we will prove a somewhat stronger statement that provides bounds for $\ell^2$-type averages
of the resolvent on scales much smaller than the diffusive scaling.
\begin{proposition}
Let $\eta > \lambda^{2.1-\delta}$ and
\[
r > C (\lambda^{1/2} (\lambda^2\eta^{-1})^{5/2}) \lambda^{-1} \eta^{-1/2}.
\]
Then letting $\chi_r$ be the indicator function for the ball of radius $r$,
\[
(\Id - \lambda^2\mcal{K})^{-1} (\mcal{K}\chi_r) (x) \lessapprox \lambda^2 r^2.
\]
\end{proposition}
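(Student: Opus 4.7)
The plan is to expand $(\Id - \lambda^2\mcal{K})^{-1}$ as a Neumann series and interpret the resulting sum as a killed random walk. Setting $\mu := \lambda^2\sum_x \tilde K(x) = 1 - c\lambda^{-2}\eta$ (by \eqref{eq:Kmass}) and $\tilde q(x) := \mu^{-1}\lambda^2 \tilde K(x)$, one rewrites
\[
(\Id - \lambda^2\mcal{K})^{-1}\mcal{K}\chi_r(x) = \lambda^{-2}\sum_{M=1}^\infty (\lambda^2\mcal{K})^M \chi_r(x) = \lambda^{-2}\sum_{M=1}^\infty \mu^M\, \Prob(|Y_M - x| \leq r),
\]
where $Y_M = X_1+\cdots+X_M$ is the random walk on $\bbZ^d_L$ with i.i.d.\ steps drawn from $\tilde q$. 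Thus the proposition reduces to an anticoncentration bound for this killed walk, with Lemma~\ref{lem:anticonc} as the main input.

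By Lemma~\ref{lem:K-moments}, the step distribution $\tilde q$ has mean zero, is isotropic by lattice symmetry, and satisfies $\sigma^2 := d^{-1}\Expec|X|^2 \simeq \lambda^{-4}$ together with $\Expec|X|^4 \lsim \sigma^4$. Cauchy--Schwarz then gives $\Expec|X|^3 \lsim \sigma^3$, so Lemma~\ref{lem:anticonc} applies. Combined with a local CLT type density bound $\tilde q^{\ast M}(y) \lsim M^{-d/2}\sigma^{-d}$ valid for all $M \geq 1$, a volume count in the ball $B_r$ gives the uniform anticoncentration estimate
\[
\Prob(|Y_M - x| \leq r) \lsim \min\!\bigl(1,\; M^{-d/2}(r\lambda^2)^d\bigr)
\]
valid for all $M \geq 1$, all lattice sites $x$, and all $r \geq 1$.

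It remains to sum. Let $M_r := r^2\lambda^4 = (r/\sigma)^2$ and $M_* := \lambda^2\eta^{-1}$ (so $\mu^{M_*} \simeq e^{-c}$). The case of interest is $r \lsim \lambda^{-1}\eta^{-1/2}$, so that $M_r \leq M_*$; the complementary case $r \gtrsim \lambda^{-1}\eta^{-1/2}$ is covered by the crude bound $(\Id - \lambda^2\mcal{K})^{-1}\mcal{K}\chi_r \leq (\Id - \lambda^2\mcal{K})^{-1}\mcal{K}\One \lsim \eta^{-1} \lsim \lambda^2 r^2$. In the main case, I split the sum into three regimes. On $1 \leq M \leq M_r$ the trivial bound $\Prob \leq 1$ yields $\lambda^{-2} M_r = \lambda^2 r^2$. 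On $M_r \leq M \leq M_*$ the local CLT bound gives $\lambda^{-2}(r\lambda^2)^d \sum_{M \geq M_r} M^{-d/2}$, which works out to $\lambda^2 r^2$ in dimension $d\geq 3$ and $\lambda^2 r^2 \log(M_*/M_r)$ in dimension $d=2$, with the logarithm absorbed by $\lessapprox$. On $M > M_*$ the damping $\mu^M \leq e^{-cM\lambda^{-2}\eta}$ produces a geometric tail subdominant to the previous regime. When $r < \sigma$ (allowed by the hypothesis, since the lower bound on $r$ can be smaller than $\lambda^{-2}$), the first regime is vacuous and only the local CLT contribution survives, again giving $\lessapprox \lambda^2 r^2$.

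The main technical hurdle is the local CLT density bound $\|\tilde q^{\ast M}\|_{\ell^\infty} \lsim M^{-d/2}\sigma^{-d}$ down to $M=1$. Via Fourier inversion this reduces to showing $|1 - \Ft{\tilde q}(\xi)| \gtrsim \min(\sigma^2|\xi|^2, 1)$ uniformly on the dual torus, which is a concrete computation using the explicit form $\Ft{\tilde K}(\xi) = \int_{\Torus^d} (\omega(\xi+\xi')-w)^{-1}\overline{(\omega(\xi')-w)^{-1}}\diff\xi'$ and the fact that $\Impt w = \eta + \lambda^2\Impt\tilde\theta \gtrsim \lambda^2$ (since $\Impt\tilde\theta \simeq \rho(E) > 0$ away from critical energies). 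Some care is required near the critical set $\Sigma_d$, but these energies are excluded by the hypothesis $d(E,\Sigma_d) > \eps$. A secondary but cosmetic subtlety is verifying that the $\log$ losses in the $d=2$ regime are of polynomial order in $\log\lambda^{-1}$ and thus absorbed by the $\lessapprox$ notation.
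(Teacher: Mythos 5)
Your overall argument --- rewriting the Neumann series as a killed random walk with step law $\tilde{q}$, verifying the moment hypotheses of Lemma~\ref{lem:anticonc} via Lemma~\ref{lem:K-moments} (including the Cauchy--Schwarz step giving $\Expec|X|^3\lsim\sigma^3$), and splitting the $M$-sum into a trivial regime, an anticoncentration regime, and a geometrically damped tail --- is essentially the paper's proof. The one ingredient you add beyond the paper, a local CLT bound $\|\tilde{q}^{\ast M}\|_{\ell^\infty}\lsim M^{-d/2}\sigma^{-d}$ claimed for all $M\geq1$, is false. The step density has a macroscopic peak at the origin: $\tilde{q}(0)=\mu^{-1}\lambda^2|\tilde{M}_{00}|^2\simeq\lambda^2$, whereas $\sigma^{-d}=\lambda^{2d}$, so the bound at $M=1$ is off by a factor $\lambda^{-2(d-1)}$ in $d\geq2$. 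Your proposed Fourier input $|1-\Ft{\tilde{q}}(\xi)|\gtrsim\min(\sigma^2|\xi|^2,1)$ does not rescue it: on the bulk of the dual torus it only forces $|\Ft{\tilde{q}}|\leq 1-c$, so $\int|\Ft{\tilde{q}}|^M\gtrsim(1-c)^M\gg M^{-d/2}\sigma^{-d}$ whenever $M\lsim\log\lambda^{-1}$. In fact the target bound itself fails in the range $r\ll\sigma$: since $|\tilde{M}_{0y}|\simeq|y|^{-(d-1)/2}$ on kinetic scales, one has $\mcal{K}\chi_r(0)=\sum_{|y|\leq r}|\tilde{M}_{0y}|^2\simeq r$ for $1\ll r\ll\lambda^{-2}$, so the $M=1$ term of the Neumann series alone contributes $\simeq r$, which exceeds $\lambda^2 r^2$ by a positive power of $\lambda$ whenever $r\ll\lambda^{-2}$.

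The gap is not load-bearing. In the application to Theorem~\ref{thm:resolvent-deloc} the cut-off radius is at the diffusive scale $r\simeq\lambda^{-1}\eta^{-1/2}\gtrsim\lambda^{-2}=\sigma$, so the case $r<\sigma$ never arises; and for $r\geq\sigma$, Lemma~\ref{lem:anticonc} as stated already gives $\Prob(|Y_M-x|\leq r)\lsim\min(1,M^{-d/2}(r\lambda^2)^d)$, which is exactly what the paper uses and is all your regime-splitting needs. The correct fix is therefore to restrict to $r\geq\sigma$ (which both the paper's proof and its application tacitly assume) and drop the local CLT machinery entirely. You are right to notice that the stated lower bound on $r$ allows $r<\sigma$, but the local CLT is neither the right tool nor actually available there; the hypothesis on $r$ is what needs tightening.
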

\begin{proof}
By Proposition~\ref{prp:RFR} it suffices to prove that for any $x$
\begin{equation}
\label{eq:reduction-1}
(\Id-\lambda^2\mcal{K})^{-1} \mcal{K}\chi_r(x) \leq \lambda^2 r^2,
\end{equation}
where $\mcal{K}$ is convolution against the kernel $\tilde{K}_z$

Writing out the Neumann series for $(\Id-\lambda^2\mcal{K})^{-1}$ we have
\begin{align*}
(\text{Id}-\lambda^2\mathcal{K})^{-1}(\mathcal{K}\chi_{r}(x_0))(x)
&=  \lambda^{-2}\sum_{j=1}^\infty (\lambda^2 \tilde{K})^{\ast j}\ast \chi_r(x) \\
&=  \lambda^{-2} \sum_{j=1}^\infty (1-\alpha)^j ((1-\alpha)^{-1}\tilde{K})^{\ast j}\ast \chi_r(x),
\end{align*}
where we set $\alpha = 1-\lambda^2 \sum_x \tilde{K}(x)$ so that $(1-\alpha)^{-1}\tilde{K}$ is a
probability distribution (note that by Lemma~\ref{lem:K-moments}, $\alpha \simeq \lambda^{-2}\eta$).  Letting $Y_k = \sum_{j=1}^k X_j$ be the partial sums of a random
walk with independent steps $X_j$ of distribution $(1-\alpha)^{-1}\lambda^2 \tilde{K}$, we can write
the above as
\begin{align*}
(\text{Id}-\lambda^2\mathcal{K})^{-1}(\mathcal{K}\chi_{r}(x_0))(x)
&= \lambda^{-2} \sum_{j=1}^\infty (1-\alpha)^j \Prob(|Y_j - x_0| \leq r) \\
&\lsim \lambda^{-2} \sum_{j=1}^\infty (1-\alpha)^j \min\{1, r^d (\lambda^{-2} j)^{-d/2}\},
\end{align*}
The estimate~\eqref{eq:reduction-1} follows from combining Lemma~\ref{lem:K-moments} and
Lemma~\ref{lem:anticonc} (with $\sigma = \lambda^{-2}$).
For $j\leq \lambda^4 r^2$ we simply bound the terms by $1$ and get a contribution of
$\lambda^2 r^2$.  The terms with $j\gg \lambda^4 r^2$ are bounded similarly because of the
summable decay\footnote{Except of course in $d=2$, where we lose a logarithm, but anyway this is hidden by the stochastic domination notation.} of $j^{-d/2}$, and we obtain
\[
(\text{Id}-\lambda^2\mathcal{K})^{-1}(\mathcal{K}\chi_{r}(x_0))(x)
\lessapprox \lambda^2 r^2
\]
as desired.
\end{proof}

\printbibliography
\end{document}